\theoremstyle{thmstyleone}%
\newtheorem{theorem}{Theorem}[section]%  meant for continuous numbers
\newtheorem{lemma}{Lemma}[section]% {corollary}
\newtheorem{corollary}{Corollary}[section]
\newtheorem{proposition}{Proposition}[section]% 
\newtheorem{assumption}{Assumption}[section]
\theoremstyle{thmstyletwo}%
\newtheorem{example}{Example}[section]%
\newtheorem{remark}{Remark}[section]%
\theoremstyle{thmstylethree}%
\newtheorem{definition}{Definition}[section]%
\numberwithin{equation}{section}
\def\R{\mathbb{R}}
\def\H{\mathbf{H}}
\def\M{\mathbf{M}}
\def\A{\mathbf{A}}
\def\CS{\mathcal{S}}
\def\S{\mathbb{X}^*}
\def\N{\mathbb{N}^*}
\def\bx{{\bf x}}
\def\bz{{\bf w}}
\def\bu{{\bf u}}
\def\bd{{\bf d}}
\def\bv{{\bf v}}
\def\bw{{\bf w}}
\def\bb{{\bf b }}
\def\ba{{\bf a }}
\def\bfz{{\bf z }}
\def\co{{\rm co }}
\def\bg{{\boldsymbol g }}
\def\supp{{\rm supp }}
\def\sign{{\rm sgn}}
\def\prox{{\rm Prox}_{\alpha\lambda\|\cdot\|^q_q}}
\def\eqspace{\arraycolsep=1.5pt\def\arraystretch}
\begin{document}
%\raggedbottom 
\flushbottom
\title[Revisiting Lq Norm Regularized Optimization]{\vspace{-1.75cm}
Revisiting $L_{q}(0\leq {q}<1)$ Norm 
Regularized Optimization
%Proximal Semismooth Newton Pursuit for $L_{q}(0\leq {q}<1)$ Norm Regularized Optimization
}

%%=============================================================%%
%% Prefix	-> \pfx{Dr}
%% GivenName	-> \fnm{Joergen W.}
%% Particle	-> \spfx{van der} -> surname prefix
%% FamilyName	-> \sur{Ploeg}
%% Suffix	-> \sfx{IV}
%% NatureName	-> \tanm{Poet Laureate} -> Title after name
%% Degrees	-> \dgr{MSc, PhD}
%% \author*[1,2]{\pfx{Dr} \fnm{Joergen W.} \spfx{van der} \sur{Ploeg} \sfx{IV} \tanm{Poet Laureate} 
%%                 \dgr{MSc, PhD}}\email{iauthor@gmail.com}
%%=============================================================%%

\author*[1]{\fnm{Shenglong} \sur{Zhou}}\email{szhou2021@163.com}

\author[2]{\fnm{Xianchao} \sur{Xiu}}\email{xcxiu@shu.edu.cn}
%\equalcont{These authors contributed equally to this work.}

\author[3]{\fnm{Yingnan} \sur{Wang}}\email{wyn1982@hotmail.com}
%\equalcont{These authors contributed equally to this work.}

\author[4]{\fnm{Dingtao} \sur{Peng}}\email{dingtaopeng@126.com}

%\equalcont{These authors contributed equally to this work.}
\affil*[1]{
\orgdiv{School of Mathematics and Statistics}, \orgname{Beijing Jiaotong University}, \orgaddress{\city{Beijing}, \country{China}}}
%\street{Street}, \city{Beijing}, \postcode{100190}, \state{State}, \country{Country}}
%}

\affil[2]{
\orgdiv{School of Mechatronic Engineering and Automation}, \orgname{Shanghai University}, 
 \orgaddress{\city{Shanghai}, \country{China}}}
%\street{Street}, \city{City}, \postcode{10587}, \state{State}, \country{Country}}
%}

\affil[3]{
%\orgdiv{School of Mechatronic Engineering and Automation},
 \orgname{Southwest Forestry University}, \orgaddress{\city{Yunnan}, \country{China}}}
%\street{Street}, \city{City}, \postcode{10587}, \state{State}, \country{Country}}
%}

\affil[4]{\orgdiv{School of Mathematics and Statistics}, \orgname{Guizhou University}, \orgaddress{\city{Guizhou}, \country{China}}}
%\street{Street}, \city{City}, \postcode{10587}, \state{State}, \country{Country}}
%}

%%==================================%%
%% sample for unstructured abstract %%
%%==================================%%

\abstract{Sparse optimization has seen its advances in recent decades. For scenarios where the true sparsity is unknown, regularization turns out to be a promising solution. Two popular non-convex regularizations are the so-called $L_0$ norm and $L_q$ norm with $q\in(0,1)$, giving rise to extensive research on their induced optimization. However, the majority of these work centered around the main function that is twice continuously differentiable, and the best convergence rate for an algorithm solving the optimization with $q\in(0,1)$ is superlinear. 
This paper explores the $L_q$ norm regularized optimization in a unified way for any $q\in[0,1)$, where the main function has a semismooth gradient. In particular,  we establish the first-order and the second-order optimality conditions under mild assumptions and then integrate the proximal operator and semismooth Newton method to develop two proximal semismooth Newton pursuit algorithms.  Under the second sufficient condition, the whole sequence generated by two algorithms converges to a unique local minimizer. Moreover, the convergence is superlinear and quadratic if the gradient of the main function is semismooth and strongly semismooth at the local minimizer, respectively. Hence,  this paper accomplishes the quadratic rate for an algorithm designed to solve the $L_q$ norm regularization problem for any $q\in(0,1)$.  Finally, some numerical experiments have showcased their nice performance when compared with several existing solvers. }

\keywords{Proximal operator,  P-stationary point, optimality conditions, {PSNP}, global and sequence convergence,  superlinear convergence rate, quadratic convergence rate}

%%\pacs[JEL Classification]{D8, H51}

\pacs[MSC Classification]{90C30, 49M15, 90C46, 90C06, 65K05}

\maketitle
%\vspace{-1cm}
\section{Introduction}
We study the following sparse optimization with the $L_q$ norm regularization  (SOLq):
\begin{equation}\label{rso} 
%\begin{array}{ll}
\min_{\bx\in\R^n}~F(\bx):= f(\bx) +   \lambda \|\bx\|^q_q, 
%\end{array}
\tag{SOLq}
\end{equation} 
where main objective function $f:\R^n\mapsto\R$ is continuously differentiable and bounded from below,    $\lambda>0$ is a penalty parameter, and $\|\bx\|^q_q:=\sum_{i}|x_i|^q$ is the so-called $L_q$ norm with $q\in[0,1)$. Particularly, it is $L_0$ norm when $q=0$ , namely, $\|\bx\|^q_q=\|\bx\|_0$ that counts the number of non-zero elements of $\bx$. We point out that the $L_q$ norm with $q\in[0,1)$ does not adhere to the triangle inequality property, and thus it does not fully conform to the definition of a norm. Typical applications of the above optimization include compressive sensing (CS) \cite{candes2005decoding, candes2006robust, donoho2006compressed,foucart2011hard},    sparse logistic regression \cite{bahmani2013greedy,wangaextended}, quadratic CS or phase retrieval \cite{shechtman2011sparsity,Beck13,shechtman2014gespar}, among others. 
%%%%%%%%%%%%%%%%%%%%%%%%%%%%%%%%%%%%%%%%%%%%%%%%%%%%%%%%%%%%%%%

\subsection{Prior arts}
There are fruitful theoretical and numerical results in the realm of sparse optimization, therefore we may only concentrate on the algorithms specifically tailored for addressing problem \eqref{rso} in order to provide motivation for our work.

{\bf a) Case $q=0$.} In this case,  \eqref{rso} is usually phrased as zero norm or cardinality regularized optimization. We review the relevant work from two perspectives.

 For the CS problems,  the early work can be traced back to  \cite{blumensath2008iterative} in which an iterative hard-thresholding algorithm has been proposed.  In  \cite{ito2013variational}, the authors designed a primal-dual active set algorithm based on the augmented Lagrange algorithm. Similar ideas were further extended by  \cite{jiao2015primal, huang2018constructive}, leading to the development of the primal-dual active set continuation ({PDASC}, \cite{jiao2015primal}) algorithm and support detection and root finding ({SDAR}, \cite{huang2018constructive})  algorithm. They leveraged the proximal operator of the $L_0$ norm to determine an active set, on which  Newton steps were executed.

 For the more general \eqref{rso} with $q=0$,  the work in \cite{zhou2021newton} has also combined the proximal operator of the $L_0$ norm with Newton steps to develop a Newton algorithm for the $L_0$ regularized optimization. However, the dominance of Newton steps and the careful selection of several parameters were necessary to ensure convergence. Additionally, in \cite{li2022difference}, the $L_0$ norm regularization was applied to group sparse optimization which was addressed by the difference-of-convex algorithms.

{\bf b) Case $q\in(0,1)$.} Despite considerable interest devoted to investigating \eqref{rso} with $q\in(0,1)$, the majority of work has been carried out for the CS problems. 

 For the CS problems,   it is demonstrated in \cite{krishnan2009fast} that the $L_{1/2}$ and
$L_{2/3}$ regularizations were relatively efficient when applied to image deconvolution. In \cite{chen2010lower}, lower bounds for the magnitudes of non-zero entries in every local optimal solution of model \eqref{rso} were achieved, enabling zero entries of numerical solutions to be identified precisely. Based on the lower bound theory, the authors proposed a hybrid orthogonal matching pursuit-smoothing gradient algorithm. Furthermore, the authors in \cite{Chen2011ComplexityOU} offered an in-depth study of the model's complexity, including its strong NP-hardness and sparsity level of solutions achieved through selecting parameters $\lambda$ and $q$ carefully. 
In \cite{lai2011unconstrained},  a smoothing function was created to approximate the $L_q$ norm, allowing for the exploitation of an iterative reweighted algorithm. A similar idea was then employed into \cite{lai2013improved}. Additionally, special attention was given to the case of $q=1/2$ in \cite{xu2012}, resulting in a half-thresholding algorithm. A comprehensive convergence analysis for this algorithm was delivered in \cite{zeng2014convergence}. Fairly recently, the authors in \cite{hu2021linear} established equivalences among a local minimum, second-order optimality condition, and second-order growth property. Apart from the optimality analysis, they also discussed the linear convergence rates of several inexact gradient algorithms.

  For the general \eqref{rso} with $q\in(0,1)$, the author in \cite{lu2014iterative} exploited a locally Lipschitz function to approximate the $L_q$ norm so as to develop two iterative reweighted minimization methods. Recently,  a hybrid approach combining the proximal gradient and subspace regularized Newton method was cast in \cite{wu2022globally}, where the Kurdyka-$\L{}$ojasiewicz (KL) property was used to establish some preferable convergence properties, such as the sequence convergence in a superlinear rate, see Table \ref{tab:com-algs}.

 \begin{table}[!th]
\renewcommand{\arraystretch}{1}\addtolength{\tabcolsep}{1.5pt}
	\caption{Comparisons of the convergence results for different algorithms. }
	\label{tab:com-algs}
\begin{tabular}{lll rrr|rr } \toprule
&Probs. &Algs.  &Ref.& Global &	  Sequence~~   & Rates of  &	Assumps.	  \\   
& & & & conv. &	  conv.  & conv. &	  \\    \toprule
\multirow{7}{*}{$q=0$} &\multirow{3}{*}{CS}
&{PDASC}& \cite{jiao2015primal} &$--$&\textcircled{1}\textcircled{3}  &Finite &\textcircled{1}\textcircled{3}\\
&&{SDAR}& \cite{huang2018constructive} &$--$& \textcircled{2}\textcircled{3} &Finite &\textcircled{2}\textcircled{3}\\
&&{PSNP},{PCSNP}& Our & No & \textcircled{12}  &Finite & \textcircled{12} \\ \cmidrule{2-8}

&\multirow{4}{*}{General}&  {NL0R}& \cite{zhou2021newton}&\textcircled{5}  & \textcircled{5}\textcircled{7}  & Quadratic  & \textcircled{5}\textcircled{7}\textcircled{11}\\
&&{PSNP},{PCSNP}& {Our} &\textcircled{4}\textcircled{8}   &  \textcircled{4}\textcircled{8}\textcircled{12}  &Superlinear  & \textcircled{4}\textcircled{9}\textcircled{12} \\
&&{PSNP},{PCSNP}&{Our} &\textcircled{4}\textcircled{8}   &  \textcircled{4}\textcircled{8}\textcircled{12}  &Quadratic  & \textcircled{4}\textcircled{10}\textcircled{12} \\
&&{PSNP},{PCSNP}& Our  & \textcircled{5}  &  \textcircled{5}\textcircled{12}  &Quadratic & \textcircled{5}\textcircled{11}\textcircled{12} \\ \midrule

\multirow{7}{*}{$q\in(0,1)$} &\multirow{3}{*}{CS}&{IRucLq}&\cite{lai2013improved}& \textcircled{1}& $--$ &Superlinear  & \textcircled{1}\\
%&{IHT$_{1/2}$}&\cite{zeng2014convergence}&No&\textcircled{1}  &Linear & \textcircled{1}\\
&&{PGA}&\cite{hu2021linear} & $--$& $--$  &Linear & \textcircled{18}\\
&&{PSNP},{PCSNP}& Our &No&   \textcircled{12}  &Quadratic & \textcircled{12} \\ \cmidrule{2-8} 

&\multirow{4}{*}{General}&  {IRL1}& \cite{lu2014iterative}&\textcircled{5} & $--$  & $--$ & $--$ \\
&&  {HpgSRN}& \cite{wu2022globally}&\textcircled{6}\textcircled{13}   & \textcircled{6}\textcircled{13}\textcircled{14}\textcircled{15}   &Superlinear  & \textcircled{6}\textcircled{13}\textcircled{14}\textcircled{15}\textcircled{16}\textcircled{17}\\
&&{PSNP},{PCSNP}&  {Our} & {\textcircled{8}} &  {\textcircled{8}\textcircled{12}} & Superlinear  & \textcircled{9}\textcircled{12} \\
&&{PSNP},{PCSNP}&  Our & {\textcircled{8}} &  {\textcircled{8}\textcircled{12}}  & Quadratic  & \textcircled{10}\textcircled{12} \\\botrule
 		\end{tabular}
	\footnotetext{Assumptions: 
	\textcircled{1} RIP (restricted isometry property), 
	\textcircled{2} SRC (sparse Rieze condition), \textcircled{3} Conditions on solutions, 
	\textcircled{4} $f$ is coercive when $q=0$,
	\textcircled{5} $f$ is strongly smooth,  
	\textcircled{6} $f$ is twice continuously differentiable on $\R^n$, 
	\textcircled{7} $f$ is locally strongly convex at $\bx^*$,
	\textcircled{8} $\nabla f$ is locally Lipschitz on a bounded set,
	\textcircled{9} $\nabla f$ is semismooth on a bounded set,
	\textcircled{10} $\nabla f$ is strongly semismooth on a bounded set,  
	\textcircled{11} $\nabla^2 f$ is locally Lipschitz at $\bx^*$,
	\textcircled{12} Second-order sufficient condition \eqref{2nd-sufficient-cond} holds at $\bx^*$, 
	\textcircled{13} $\nabla^2 f$ is locally Lipschitz on $\R^n$,
	\textcircled{14} $F$ satisfies K$\L{}$ property, 
	\textcircled{15} The sequence meets an angular condition, 
	\textcircled{16} A local error bound condition, 
	\textcircled{17} Second-order necessary condition \eqref{2nd-neccssary-cond} holds at $\bx^*$, 
	\textcircled{18} The sequence converges to a local minimizer.}
	\footnotetext{Abbreviations: `Global conv.' stands for that any accumulating point of the sequence is a certain minimum or stationary point.   `Sequence cov.' represents the whole sequence convergence. `$--$' means no such results. `No' signifies no need for additional conditions. `Finite'  means the sequence terminates within finitely many steps, a better convergence property than the quadratic rate.  } 
\end{table}  

\subsection{Contributions}
To the best of our knowledge, most existing work on this optimization necessitates conditions such as the twice continuous differentiability or strong smoothness of $f$, or the Lipschitz continuity of gradient $\nabla f$. Limited research has been dedicated to addressing this problem in cases where $\nabla f$ is locally Lipschitz or semismooth.  Therefore,  we embark on research within this trajectory, study the $L_q$ norm regularized optimization in a unified way for any $q\in[0,1)$, and develop an efficient numerical algorithm. The main contributions are threefold.
\begin{itemize} 
\item[A.] \textit{Optimality conditions under mild assumptions.} Based on the proximal operator of the $L_q$ norm with $q\in[0,1)$, we introduce the concept of a P-stationary point (refer to Definition \ref{p-stationary-point}) of \eqref{rso}. This allows us to build the first-order necessary as well as the second-order necessary and sufficient optimality conditions, which enable us to reveal the relationship between a P-stationary point and a local minimizer: a local minimizer is a P-stationary point if $\nabla f$ is locally Lipschitz. If we further assume the second-order sufficient condition,  a P-stationary point is a unique local minimizer.

\item[B.] \textit{Two novel algorithms having sequence and quadratic convergence.} We integrate the proximal operator and the Newton method to design two algorithms. They are proximal semismooth Newton pursuit ({PSNP}) and proximal conditional semismooth Newton pursuit ({PCSNP}). Both of them consist of two steps in each iteration. Firstly, we exploit the proximal operator of the $L_q$ norm to render a point $\bw^k$ with a reduced objective function value. Subsequently,  the support set of $\bw^k$ determines a subspace on which a semismooth Newton step is performed. Therefore, both algorithms are capable of ensuring a descent property while enabling rapid convergence, allowing us to prove the whole sequence convergence to a local minimizer at a superlinear or quadratic rate. The only difference between {PSNP} and {PCSNP} is whether to perform the Newton step when a condition is met, as outlined in Algorithms \ref{algorithm 1} and \ref{algorithm 2}.  

 We summarize the convergence outcomes for different algorithms in Table  \ref{tab:com-algs}. Notably, our algorithms {PSNP} and {PCSNP} consistently exhibit the global and sequence convergence under some more relaxed conditions. Specifically, for CS problems, we achieve the best results under $\textcircled{12}$ which is weaker than $\textcircled{1}$ and  $\textcircled{2}$. For the general problem with $q=0$,   {PSNP}, {PCSNP} and {NL0R} demonstrate the quadratic convergence rate but {PSNP} and {PCSNP} benefits from milder conditions due to $\textcircled{7} \Rightarrow \textcircled{12}$. For the general problem with $q\in(0,1)$,  {HpgSRN} achieves the accumulation point convergence under  $\textcircled{6}$, a stronger condition than $\textcircled{8}$ employed by {PSNP} and {PCSNP}. To establish the sequence convergence, it imposes condition $\textcircled{15}$ reliant on the sequence. Moreover, as shown in \cite[Theorem 4.9]{wu2022globally}, its most favorable convergence rate is superlinear with order $3/2$, whereas {PSNP} and {PCSNP} can converge quadratically. To the best of our knowledge, this work marks the first achievement of a quadratic convergence rate of an algorithm for solving  \eqref{rso} for $q\in(0,1)$.

\item[C.] \textit{High numerical performance.} We conduct some numerical experiments and compare {PSNP} and {PCSNP}  with several algorithms proposed for solving the $L_q$ norm regularized optimization. The results demonstrate that {PSNP}  and {PCSNP} are capable of delivering desirable sparse solutions in terms of accuracy as well as running relatively fast.
\end{itemize}

\subsection{Organization}
 The paper is organized as follows. In the next section, we introduce the notation to be used throughout this paper and explore properties of the proximal operator of the $L_q$ norm with $q\in[0,1)$. Section \ref{sec:opt-ana} is dedicated to establishing first and second-order necessary and sufficient optimality conditions for problem \eqref{rso}. We proceed to develop two proximal semismooth Newton pursuit algorithms and analyze their convergence properties in Section \ref{sec:pdnp}. This section also features the application of the derived theory to specific examples. Moving on to Section \ref{sec:num}, we conduct numerical experiments to demonstrate the performance of our proposed algorithms in comparison with several existing solvers. The paper concludes with a summary in the final section. 

\section{Preliminaries} 
In this section, we first introduce all notation and some useful functions, followed by giving relevant properties of these functions and the proximal operator of the $L_q$ norm.
\subsection{Notation} We first present all notation used in this paper.  The sign function is written as $\sign(t)$ which returns $0$ if $t=0$, $1$ if $t>0$, and $-1$ otherwise. The Euclidean norm for vectors and the spectral norm for matrices are represented as $\|\cdot\|$, and the infinity norm is denoted as $\|\cdot\|_{\infty}$.   We define $\supp(\bx)$ as the support set of vector $\bx$, containing all indices of its non-zero entries.  For an index set $\CS$, we use $\overline{\CS}$ to denote its complement, and $|\CS|$ signifies its cardinality   Given a vector $\bx$, we denote its neighborhood with a positive radius $\epsilon$ as ${\mathbb N}(\bx,\epsilon):=\{\bz \in\R^n: \|\bx-\bz \|^2< \epsilon\}$, and its sub-vector indexed by $\CS$ as $\bx_{\CS}\in\R^{|\CS|}$.  Furthermore, we represent the stacking of two vectors as $(\bx; \bz):=(\bx^\top \bz^\top)^\top$. Next, we introduce some concepts. \begin{itemize}
\item {\it (Locally) Lipschitz continuity:}  Function $g:\Omega \subseteq \R^n\rightarrow\R^m$ is  locally Lipschitz   on open set $\Omega$ with a positive constant $L$ if $\| g(\bw)- g(\bx)\|\leq L\| \bw-\bx\|$ for any $\bv,\bx\in \Omega$. In particular, if $\Omega = \mathbb{N}(\bx,\epsilon)$ for an $\epsilon>0$, we say $g$ is the locally Lipschitz at $\bx$. If $\Omega=\R^n$, we say $g$ is Lipschitz. 
\item{\it Strong smoothness:} $f$ is said to be $L$-strongly smooth on  $\R^n$ if for any $\bx,\bw\in\R^n$, 
\begin{eqnarray}\label{lip-L-def}
\begin{array}{ll}
f(\bx)\leq f(\bw)+\langle \nabla f(\bw), \bx-\bw \rangle +  \frac{L}{2} \|\bx-\bw\|^2.
\end{array}\end{eqnarray}
\item {\it Subdifferential:} For a proper and lower semi-continuous  function $h:\R^n\to\R$, its subdifferential $\partial h(\cdot)$ is well defined as \cite[Definition 8.3]{RW1998}. One can calculate that 
 \begin{eqnarray*}
\partial |z|^q = \left\{\begin{array}{ll}
\{q\sign(z)|z|^{q-1}\}, & |z|>0,\\
(-\infty, \infty), & |z|=0.
\end{array}\right.\end{eqnarray*}

\item {\it Generalized Hussain:}  Let $f$ be continuous differentiable and its gradient is locally Lipschitz on an open set $\Omega\subseteq\R^n$. Then by \cite{hiriart1984generalized}  the generalized Hussain matrix of $f$ at $\bx$, denoted by $\partial^2 f(\bx)$, is the set of matrices defined as, 
\begin{eqnarray*}\begin{array}{l}
 {\rm co}\left\{\H: \exists~ \bx^i\to\bx ~\text{with $f$ being twice  differentiable at $ \bx^i$   and~ $\nabla^2 f(\bx^i) \to \H$} \right\}.
 \end{array}\end{eqnarray*} 
where ${\rm co}\{\Omega\}$ stands for the convex hull of $\Omega$. It follows from \cite[Theorem 2.3]{hiriart1984generalized} that for any $\bx,\bw\in\Omega$, there is ${\bf z}\in{\rm co}\{\bx,\bw\}$ and $\H\in \partial^2 f({\bf z})$ such that
 \begin{eqnarray}\label{MVT}
\begin{array}{l}
 f(\bx)= f(\bw)+\langle \nabla f(\bw), \bx-\bw \rangle + \frac{1}{2} \langle \H (\bx-\bw), \bx-\bw \rangle .
 \end{array}\end{eqnarray} 
 \item {\it Semismoothness and directional derivative:} According to \cite{qi1993nonsmooth}, function $g:\Omega \subseteq \R^n\rightarrow\R^m$ is said to be semismooth at $\bx$ if $g$ is locally Lipschitz at $\bx$ and 
   \begin{eqnarray}\label{semi-s}
 \begin{array}{l}
  \lim_{\H\in\partial g(\bx+t\bd'),\bd'\to\bd,t\downarrow0} \{\H \bd'\}
 \end{array} \end{eqnarray} 
 exists for any $\bd\in\R^n$. 
% The semismoothness of $g$ at $\bx$ implies that
%  \begin{eqnarray}\label{semi-s-pro-1}
% \begin{array}{l}
%   g(\bx+\bh)=g(\bx)+g'(\bx;\bh) +o(\|\bh\|),
% \end{array} \end{eqnarray} 
%for any $\bh\to0$, 
The classic directional derivative of $g$ is   defined by
  \begin{eqnarray}\label{cdd}
 \begin{array}{ll}
 g'(\bx;\bd)~= \lim_{t \downarrow 0} \frac{g(\bx+t\bd)-g(\bx)}{t},\\
  g''(\bx;\bd)= \lim_{t \downarrow 0} \frac{\langle g(\bx+t\bd)-g(\bx), \bd\rangle }{t}=\langle g'(\bx;\bd),\bd \rangle.
 \end{array} \end{eqnarray} 
 The semismoothness implies that 
    \begin{eqnarray}\label{cdd-delta-semi}
 \begin{array}{l} 
 \H\bd  = g'(\bx;\bd) + o(\|\bd\|)
%  g(\bx+\bh)=g(\bx)+g'(\bx;\bh) +o(\|\bh\|^{1+\delta}).
 \end{array} \end{eqnarray} 
for any $\H\in\partial g(\bx+\bd)$ and $\bd\to0$. In addition, for $\delta\in(0,1]$, $g$ is said to be strongly semismooth at $\bx$ if for any $\H\in\partial g(\bx+\bd)$ and $\bd\to0$ there is 
   \begin{eqnarray}\label{cdd-delta}
 \begin{array}{l} 
 \H\bd  = g'(\bx;\bd) + O(\|\bd\|^{2}).
%  g(\bx+\bh)=g(\bx)+g'(\bx;\bh) +o(\|\bh\|^{1+\delta}).
 \end{array} \end{eqnarray} 
 It is easy to see that if $g$ is continuously differentiable at $\bx$, then $g$ is semismooth. If further $\nabla g$ is locally Lipschitz at $\bx$, then $g$ is strongly semismooth at $\bx$.
\end{itemize}
Throughout the paper, we say $f$ is LC$^1$ at $\bx$ (resp. on $\Omega$) if it is continuously differentiable and $\nabla f$ is locally Lipschitz at $\bx$ (resp. for any $\bx\in\Omega$). Similarly, we  say  $f$ is SC$^1$  at $\bx$ (resp. on $\Omega$) if $f$ is LC$^1$ and $\nabla f$ is   semismooth at $\bx$ (resp. for any $\bx\in\Omega$). Moreover,  $f$ is said to be SC$^2$ at $\bx$ (resp. on $\Omega$) if $f$ is LC$^1$ and $\nabla f$ is strongly semismooth at $\bx$ (resp. for any $\bx\in\Omega$). 
 One can observe that
\begin{eqnarray}\label{Lip-Smooth-continuous}
\begin{array}{rll}
%\text{$\nabla f$ is Lipschitz}& \Longrightarrow&\text{$f$ is strongly smooth} \\
%&\Longrightarrow&\text{$f$ is twice continuous differentiable}\nonumber\\
%&\Longrightarrow&\text{$f$ is $SC^1$, $\delta\in(0,1]$}\\
%&\Longrightarrow&\text{$f$ is $SC^1$ on an open set $\Omega\subseteq \R^n$}\nonumber\\
%&\Longrightarrow&\text{$f$ is $LC^1$ on an open set $\Omega\subseteq \R^n$},\\
\text{$f$ is twice continuously differentiable}&\Longrightarrow&\text{$f$ is $SC^1$ on an open set $\Omega\subseteq \R^n$}\\
&\Longrightarrow&\text{$f$ is $LC^1$ on an open set $\Omega\subseteq \R^n$}.
\end{array}
\end{eqnarray} 
Some properties associated with LC$^1$, SC$^1$, and SC$^2$ are given as follows.
 \begin{proposition}\label{pro-bd} Suppose $f$ is $LC^1$, $SC^1$,  or $SC^2$ at a non-zero point $\bx\in\R^n$ with $\CS:=\supp(\bx)$. Then $E(\cdot;\CS)$ is $LC^1$, $SC^1$,  or $SC^2$ at  $\bx$, respectively, where 
 \begin{eqnarray}\label{def-eta-c}
\begin{array}{l}
E(\cdot;{\cal T}):=f(\cdot)+\lambda \|(\cdot)_{{\cal T}}\|_q^q.
%\nabla^2_{\CS} E( \bx ):=\nabla^2_{\CS} f(\bx)+\lambda \nabla^2 \|\bx_{\CS}\|^q_q.
\end{array}
\end{eqnarray}
 Moreover,  there is a neighbourhood  $\mathbb{N}$ of $\bx$ and a constant $c\in(0,\infty)$ satisfying \begin{eqnarray}  
\label{bound-E-S}\sup\{\|\M\|:\M\in\partial^2 E(\bw;\CS), \forall \bw\in\mathbb{N} \}\leq c.\end{eqnarray} 
 \end{proposition}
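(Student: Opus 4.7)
The key observation is that since $\bx\neq 0$ with $\CS=\supp(\bx)$, every coordinate $x_i$ with $i\in\CS$ is bounded away from zero, so the regularization term $\lambda\|\bw_\CS\|_q^q$ is in fact $C^\infty$ on a sufficiently small neighborhood of $\bx$. Concretely, I would pick any neighborhood $\mathbb{N}$ of $\bx$ whose radius is at most $\tfrac12\min_{i\in\CS}|x_i|$; then for $\bw\in\mathbb{N}$ and $i\in\CS$ we have $\sign(w_i)=\sign(x_i)$ and $|w_i|\geq |x_i|/2>0$, so $|w_i|^q=(\sign(x_i)w_i)^q$ is a power of a strictly positive smooth function of $w_i$. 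Summing over $i\in\CS$, $g(\bw):=\lambda\|\bw_\CS\|_q^q$ is $C^\infty$ on $\mathbb{N}$, and all of its partial derivatives of every order are uniformly bounded on $\mathbb{N}$ by constants depending only on $q$, $\lambda$, and $\min_{i\in\CS}|x_i|$.

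With $E(\cdot;\CS)=f+g$, I would then invoke the standard calculus that the $LC^1$, $SC^1$, and $SC^2$ properties at a point are preserved under addition of a $C^\infty$ perturbation whose derivatives are themselves smooth on a neighborhood of that point. In the $LC^1$ case this is just that a sum of locally Lipschitz gradients is locally Lipschitz. In the $SC^1$ and $SC^2$ cases, I would use that any $C^\infty$ map is trivially (strongly) semismooth of arbitrary order, so the sum rules for semismoothness and strong semismoothness transfer the respective property from $\nabla f$ to $\nabla E(\cdot;\CS)=\nabla f+\nabla g$ at $\bx$, yielding all three claimed regularity statements.

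For the uniform bound~\eqref{bound-E-S}, in each of the three cases $\nabla E(\cdot;\CS)$ is locally Lipschitz at $\bx$, so after possibly shrinking $\mathbb{N}$ it is Lipschitz on $\mathbb{N}$ with some constant $L$. By Rademacher's theorem together with the definition of $\partial^2 E(\cdot;\CS)$ as the convex hull of limits of classical Hessians at points of twice-differentiability, every $\M\in\partial^2 E(\bw;\CS)$ with $\bw\in\mathbb{N}$ satisfies $\|\M\|\leq L$, so \eqref{bound-E-S} holds with $c:=L$. The main obstacle I anticipate is carefully verifying that the Clarke-subdifferential-based semismoothness definitions in~\eqref{semi-s}--\eqref{cdd-delta} interact correctly with the additive smooth perturbation; this reduces to noting that $\partial^2 E(\bw;\CS)\subseteq \partial^2 f(\bw)+\nabla^2 g(\bw)$ on $\mathbb{N}$, after which the error estimates~\eqref{cdd-delta-semi} and~\eqref{cdd-delta} combine additively to give the desired semismooth or strongly semismooth bound.
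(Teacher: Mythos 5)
Your proposal is correct and follows essentially the same route as the paper: observe that near a non-zero $\bx$ the term $\lambda\|(\cdot)_{\CS}\|_q^q$ is smooth (the paper phrases this as $\CS\subseteq\supp(\bw)$ for all $\bw$ in a neighbourhood, which makes its gradient locally Lipschitz, semismooth, and strongly semismooth), apply the sum rule to transfer each property from $\nabla f$ to $\nabla E(\cdot;\CS)$, and deduce the bound \eqref{bound-E-S} from the local Lipschitz constant of $\nabla E(\cdot;\CS)$ (the paper cites property $(P_1)$ of the generalized Hessian for exactly the Rademacher-based argument you sketch). No substantive differences.
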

 \begin{proof}
For any point $\bw$ in a neighbourhood  $\mathbb{N}$  of $\bx$, we must have $\CS\subseteq\supp(\bw)$. This implies that $\nabla \|(\cdot)_{\CS}\|_q^q$ is twice continuously differentiable on $\mathbb{N}$, and thus  $\nabla\|(\cdot)_{\CS}\|_q^q$ is locally Lipschitz, semismooth, and strongly semismooth on $\mathbb{N}$. As   $f$ being LC$^1$, SC$^1$, SC$^2$ at $\bx$,  $\nabla E(\cdot;\CS) = \nabla  f(\cdot)+\lambda\nabla\|(\cdot)_{\CS}\|_q^q$ is locally Lipschitz, semismooth, and strongly semismooth at $\bx$ \cite[Proposition 1.75.]{izmailov2014newton}. Finally, the locally Lipschitz continuity of $\nabla E(\cdot;\CS)$ on $\mathbb{N}$ and \cite[Sect. 2 ($P_1$)]{hiriart1984generalized} enable \eqref{bound-E-S}  immediately. 
 \end{proof}
 Hereafter, for an index set $\CS$, we write the sub-gradient  as $\nabla_{\CS}  f(\bx):=(\nabla f(\bx))_{\CS}$ and $\partial^2_{\CS}  f(\bx):=\{\H_{\CS}: \H\in \partial^2  f(\bx) \}$, where $\H_{\CS}$ represents  the principal sub-matrix containing rows and columns indexed by ${\CS}$.  Moreover,   we denote
 \begin{eqnarray*}
 \nabla_{\CS} E( \bx):=\nabla_{\CS} E( \bx;{\CS}),\qquad\partial^2_{\CS} E( \bx):=\partial^2_{\CS} E( \bx; \CS).
 \end{eqnarray*}
 Finally, for a non-zero point $\bx\in\R^n$ with $\CS:=\supp(\bx)$, if $\nabla E( \cdot;\CS)$ is $SC^1$ at $\bx$, then $(\nabla E)'(\bx;\CS;\bd)$ exists for any $\bd$ at $\bx$ \cite{qi1993nonsmooth}. In such a case, we denote $$ (\nabla E)'_{\CS}(\bx;\bd):= (  (\nabla E)'(\bx;\CS;\bd))_{\CS}.$$
  \begin{proposition}\label{pro-bd-expansion} Suppose $f$ is $SC^1$ at a non-zero point $\bx\in\R^n$ with $\CS:=\supp(\bx)$. Then for any $\M\in\partial^2_{\CS} E(\bx)$  any $\bd\in\R^{n}\to 0$ with $\bd_{\overline\CS}=0$, 
 \begin{eqnarray} \label{Md-Ed}
\begin{array}{r}
\M\bd_{\CS} - (\nabla E)'_{\CS}(\bx;\bd)=o(\|\bd\|),\\[1ex]
\nabla_{\CS} E( \bx +\bd)-\nabla_{\CS} E( \bx )-(\nabla E)'_{\CS}(\bx;\bd)=o(\|\bd\|).
\end{array}
\end{eqnarray} 
If $f$ is $SC^2$ at $\bx$, then
 \begin{eqnarray} \label{Md-Ed-2}
\begin{array}{r}
\M\bd_{\CS} - (\nabla E)'_{\CS}(\bx;\bd)=O(\|\bd\|^2),\\[1ex]
\nabla_{\CS} E( \bx +\bd)-\nabla_{\CS} E( \bx )-(\nabla E)'_{\CS}(\bx;\bd)=O(\|\bd\|^2).
\end{array}
\end{eqnarray} 
 \end{proposition}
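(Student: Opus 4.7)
The plan is to reduce everything to the semismoothness of $\nabla E(\cdot;\CS)$ already packaged by Proposition~\ref{pro-bd}, and then exploit the zero-padding $\bd_{\overline\CS}=0$ to pass from the full identity $\H\bd$ to the $\CS$-block identity $\H_\CS\bd_\CS$. By Proposition~\ref{pro-bd}, $\nabla E(\cdot;\CS)$ is semismooth (resp.\ strongly semismooth) at $\bx$ in the $SC^1$ (resp.\ $SC^2$) case, and its generalized Hessian is uniformly bounded by some constant $c$ on a neighbourhood $\mathbb{N}$ of $\bx$ via \eqref{bound-E-S}. For small $\bd$ with $\bd_{\overline\CS}=0$, the point $\bx+\bd$ stays in $\mathbb{N}$ with $\supp(\bx+\bd)\supseteq\CS$, so no singularity of $\|(\cdot)_\CS\|_q^q$ is encountered. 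The trivial identity $(\H\bd)_\CS=\H_\CS\bd_\CS$, valid whenever $\bd_{\overline\CS}=0$, will be used throughout.

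For the first line of \eqref{Md-Ed}, I would apply the semismoothness identity \eqref{cdd-delta-semi} to $g=\nabla E(\cdot;\CS)$ at $\bx$: for any $\H\in\partial^2 E(\bx+\bd;\CS)$,
\[
\H\bd=(\nabla E)'(\bx;\CS;\bd)+o(\|\bd\|).
\]
Extracting $\CS$-components and invoking the trivial identity gives $\H_\CS\bd_\CS-(\nabla E)'_\CS(\bx;\bd)=o(\|\bd\|)$. To upgrade this from an $\H_\CS\in\partial^2_\CS E(\bx+\bd)$ to an arbitrary $\M\in\partial^2_\CS E(\bx)$, I would invoke the upper semicontinuity of the generalized Hessian together with the norm bound \eqref{bound-E-S}: any such $\M$ differs from some $\M_\bd\in\partial^2_\CS E(\bx+\bd)$ by a matrix of norm $o(1)$ as $\bd\to0$, so $(\M-\M_\bd)\bd_\CS=o(\|\bd\|)$ and is absorbed into the remainder.

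For the second line of \eqref{Md-Ed}, I would apply the fundamental theorem of calculus to the Lipschitz map $\nabla_\CS E(\cdot)$,
\[
\nabla_\CS E(\bx+\bd)-\nabla_\CS E(\bx)=\int_0^1 \nabla^2_\CS E(\bx+t\bd)\,\bd_\CS\,dt,
\]
where the integrand is defined for a.e.\ $t\in[0,1]$ by Rademacher's theorem and is uniformly bounded by $c$ thanks to \eqref{bound-E-S}. For each such $t$ the first line gives $\nabla^2_\CS E(\bx+t\bd)\,\bd_\CS=(\nabla E)'_\CS(\bx;\bd)+o(\|\bd\|)$, with the $o(\|\bd\|)$ term uniform in $t$, and integrating over $t\in[0,1]$ produces the claimed expansion. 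In the $SC^2$ case the same outline carries over verbatim upon substituting the strong-semismoothness identity \eqref{cdd-delta} for \eqref{cdd-delta-semi}, which promotes every $o(\|\bd\|)$ to $O(\|\bd\|^2)$ and thereby delivers \eqref{Md-Ed-2}.

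The main obstacle will be the transfer performed in the second paragraph: the semismoothness identity \eqref{cdd-delta-semi} naturally produces a bound with the Hessian sampled at the perturbed point $\bx+\bd$, whereas the proposition quantifies over all $\M\in\partial^2_\CS E(\bx)$. The decisive technical input is the closed-graph / upper semicontinuity property of the generalized Hessian combined with the uniform bound \eqref{bound-E-S}, which together let the discrepancy between the desired $\M$ and the naturally produced $\H_\CS$ be swept into the $o(\|\bd\|)$ or $O(\|\bd\|^2)$ remainder.
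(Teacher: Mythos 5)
There is a genuine gap in your argument for the first lines of \eqref{Md-Ed} and \eqref{Md-Ed-2}, located exactly where you flag the "main obstacle." You obtain the semismoothness estimate with a Jacobian sampled at the perturbed point, $\H\in\partial^2 E(\bx+\bd;\CS)$, and then claim that any prescribed $\M\in\partial^2_{\CS}E(\bx)$ differs from some $\M_{\bd}\in\partial^2_{\CS}E(\bx+\bd)$ by $o(1)$, citing upper semicontinuity. But upper (outer) semicontinuity of the Clarke generalized Jacobian gives the inclusion $\partial g(\bx+\bd)\subseteq \partial g(\bx)+\varepsilon B$ for small $\bd$ — every element at the nearby point is close to \emph{some} element at $\bx$ — and not the reverse inclusion you need, which is inner semicontinuity and fails in general. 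The scalar example $g(t)=|t|$ shows this: $\partial g(0)=[-1,1]$ while $\partial g(t)=\{1\}$ for $t>0$, so the element $0\in\partial g(0)$ stays at distance $1$ from $\partial g(t)$, not $o(1)$. Since the proposition quantifies over \emph{all} $\M\in\partial^2_{\CS}E(\bx)$, your transfer step does not close, and the $O(\|\bd\|^2)$ version is affected in the same way.

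The paper sidesteps this by applying the semismoothness estimate in the opposite orientation: writing $\bx=(\bx-\bd)+\bd$, the identity \eqref{cdd-delta-semi} at base point $\bx-\bd$ yields ${\bf Q}\bd=g'(\bx-\bd;\bd)+o(\|\bd\|)$ for every ${\bf Q}\in\partial g(\bx)$ — so the Jacobian is sampled at $\bx$ itself, which is exactly what the statement requires. The remaining mismatch is in the directional derivative, not the Jacobian, and is removed by the known estimates $g'(\bx-\bd;\bd)-g'(\bx;\bd)=o(\|\bd\|)$ in the semismooth case (Qi--Sun, Theorem 2.3(v)) and $=O(\|\bd\|^2)$ in the strongly semismooth case (Qi 2003, Lemma 2.2); restricting to the $\CS$-components then gives the first lines. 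Your treatment of the second lines (fundamental theorem of calculus plus the uniform-in-$t$ little-o) is essentially the standard proof of the estimate the paper simply cites as \cite[(2.17)]{qi1993nonsmooth}, and is fine up to the usual measurability technicality (better handled via the generalized Jacobian mean value theorem than via Rademacher along a fixed segment); but the first lines need to be repaired along the lines above.
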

 \begin{proof} By denoting $g(\cdot):=\nabla E( \cdot;\CS)$, Proposition \ref{pro-bd}  means that $g(\cdot)$ is  semismooth and strong semismooth at  $\bx\in\R^n$ if $f$ is $SC^1$ and $SC^2$, respectively, which together with 
 \eqref{cdd-delta-semi} and \eqref{cdd-delta} respectively leads to
  \begin{eqnarray*} 
 &&{\bf Q}\bd  = g'(\bx-\bd;\bd) + o(\|\bd\|), \\
 && {\bf Q}\bd  = g'(\bx-\bd;\bd) + O(\|\bd\|^2), 
 \end{eqnarray*} 
for any $ {\bf Q}\in \partial g(\bx-\bd+\bd )=\partial g(\bx)= \partial^2 E(\bx;\CS)$. Let $\M={\bf Q}_{\CS}\in \partial^2_{\CS} E(\bx)$. Moreover,  the semismoothness and strong semismoothness of $g(\cdot)$ at $\bx\in\R^n$ respectively implies  $g'(\bx-\bd;\bd)-g'(\bx;\bd)=o(\|\bd\|)$ from \cite[Theorem 2.3 (v)]{qi1993nonsmooth} and $g'(\bx-\bd;\bd)-g'(\bx;\bd)=O(\|\bd\|^2)$  from \cite[Lemma 2.2]{qi2003strongly}, thereby respectively  resulting in
   \begin{eqnarray*} 
&&{\bf Q}\bd - g'(\bx;\bd)= g'(\bx-\bd;\bd) + o(\|\bd\|)- g'(\bx;\bd)= o(\|\bd\|), \\
&&{\bf Q}\bd - g'(\bx;\bd)= g'(\bx-\bd;\bd) + O(\|\bd\|^2)- g'(\bx;\bd)= O(\|\bd\|^2), 
\end{eqnarray*} 
which by $({\bf Q}\bd - g'(\bx;\bd))_{\CS}={\bf Q}_{\CS}\bd_{\CS} -  (g'(\bx;\bd))_{\CS}= \M\bd_{\CS} - ((\nabla E)'(\bx;\CS;\bd)) _{\CS}= \M\bd_{\CS} - (\nabla E)' _{\CS}(\bx;\bd)$  yields the first equations in \eqref{Md-Ed}  and   \eqref{Md-Ed-2}, where the second ones  follow  from \cite[(2.17)]{qi1993nonsmooth}.
 \end{proof}
\subsection{Proximal Operator of the $L_q$ Norm} For a function $r:\R^n\to\R$, its proximal operator is defined by 
\begin{eqnarray*} %\label{proximal-operator-0}
\begin{array}{l}
{\rm Prox}_{r(\cdot)}(\bx):= {\rm argmin}_{\bw\in\R^n} ~ \frac{1}{2}\|\bx-\bz\|^2+r(\bw).  
\end{array}
\end{eqnarray*}
   For given  $\lambda>0$ and $q\in[0,1)$, we define two useful constants in the paper by
\begin{eqnarray} \label{prox-a-ct}
 \begin{array}{rrl}
c(\lambda,q):=(2\lambda (1-q))^{\frac{1}{2-q}}>0,~~~
\kappa(\lambda,q): =(2-q)\lambda^{\frac{1}{2-q}} (2 (1-q))^{\frac{1-q}{q-2}} .
\end{array}
\end{eqnarray}  
We present some properties of the proximal operator of $|\cdot|^q$ as follows.
% \begin{eqnarray} \label{lb-1/2}
% \eqspace{1}
% \begin{array}{rrl}
% 1- \lambda q (1-q) z^{q-2} \geq   1-\frac{q}{2} > \frac{1}{2},~~\forall~z\geq c(\lambda,q).
%\end{array}
%\end{eqnarray}   
\begin{proposition}\label{proximal-phi} For given scalar $a\in\R$, $\lambda>0$ and $q\in[0,1)$,  
\begin{eqnarray} \label{prox-a-rt}
\begin{array}{lll}
 {\rm Prox}_{\lambda |\cdot|^q} (a) &=&{\rm argmin}_z ~\frac{1}{2}(z-a)^2+\lambda |z|^q=:\varphi(z)\\
 &=& \left\{\begin{array}{lll}
\{0\}, &|a|< \kappa(\lambda,q),\\
\left\{0, \sign(a) c(\lambda,q) \right\},& |a|=  \kappa(\lambda,q),\\
\left\{\sign(a)\varpi_q (|a|)\right\}, &|a|> \kappa(\lambda,q),
\end{array} \right.
\end{array}\end{eqnarray}  
% Then it takes the form of 
%\begin{eqnarray} \label{prox-a-rt}
% \eqspace{1}
% \begin{array}{lll}
% {\rm Prox}_{\lambda |\cdot|^q} (a) = \left\{\begin{array}{lll}
%\{0\}, &|a|< \kappa(\lambda,q),\\
%\left\{0, \sign(a) c(\lambda,q) \right\},& |a|=  \kappa(\lambda,q),\\
%\left\{\sign(a)\varpi_q (|a|)\right\}, &|a|> \kappa(\lambda,q),
%\end{array} \right.
%\end{array}
%\end{eqnarray} 
 where $\varpi_ q (a) \in \{z:  z-a+\lambda  q \sign(z) z^{q-1} = 0, z>0\}$ is unique. 
Moreover, if   ${\rm Prox}_{\lambda |\cdot|^q} (a)$ contains a non-zero point $z^*$, then
 \begin{eqnarray} \label{prox-lq-loca-strong-convex}
 \begin{array}{lll}
|z^*|\geq c(\lambda,q),~~~\varphi''(|z^*|)\geq 1- \frac{q}{2}> \frac{1}{2}.  
\end{array}
\end{eqnarray}   
\end{proposition}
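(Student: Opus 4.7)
The plan is to reduce to $a \geq 0$, minimize the scalar function $\varphi(z) := \frac{1}{2}(z-a)^2 + \lambda |z|^q$ on $[0,\infty)$, and compare the candidate $z = 0$ against the unique interior local minimizer on the positive half-line. By the evenness of $|\cdot|^q$, for $a > 0$ and any $z < 0$ one checks directly that $\varphi(z) - \varphi(-z) = 2|z|a > 0$, so every minimizer is nonnegative; the case $a < 0$ follows by the reflection $a \mapsto -a$, $z \mapsto -z$, and $a = 0$ is trivial. I therefore restrict attention to $a > 0$ and $z \geq 0$.

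For $z > 0$, compute $\varphi'(z) = z - a + \lambda q z^{q-1}$ and $\varphi''(z) = 1 - \lambda q (1-q) z^{q-2}$. Interior critical points solve $g(z) = a$, where $g(z) := z + \lambda q z^{q-1}$ satisfies $g'(z) = \varphi''(z)$. For $q \in (0,1)$, $g$ is strictly convex on $(0,\infty)$ with unique minimum at $z_0 := (\lambda q(1-q))^{1/(2-q)}$ (the zero of $\varphi''$) and tends to $+\infty$ at both endpoints. Hence $g(z) = a$ has no positive root for $a < g(z_0)$, a single root at $z_0$ for $a = g(z_0)$, and exactly two roots $z_1 \in (0, z_0)$ (a local maximum of $\varphi$, since $\varphi'' < 0$) and $z_2 \in (z_0, \infty)$ (a local minimum, since $\varphi'' > 0$) for $a > g(z_0)$. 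Thus only $z_2$ can compete with $z = 0$ for the global minimum. The $q = 0$ case collapses to $g(z) = z$ and $z_2 = a$.

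To compare $\varphi(0)$ and $\varphi(z_2)$, I would use the stationarity identity $a - z_2 = \lambda q z_2^{q-1}$ to eliminate $a$ from the difference of objective values, producing
\[
\varphi(0) - \varphi(z_2) = \frac{1}{2} z_2^2 - \lambda(1-q) z_2^q =: h(z_2).
\]
The equation $h(z) = 0$ has unique positive root $z = c(\lambda,q) = (2\lambda(1-q))^{1/(2-q)}$, and $h$ is strictly convex on $(0,\infty)$ with $h(z_0) < 0$ (verified via $z_0^{2-q} = \lambda q(1-q)$), so $h$ is negative on $(z_0, c)$, vanishes at $c$, and is positive on $(c,\infty)$. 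Transporting the $z$-threshold $z_2 = c$ back to $a$-space through the monotone $g|_{[z_0,\infty)}$, the critical $a$-value is $a_\star = g(c) = c \cdot (2-q)/(2(1-q))$, which, after substituting the closed form for $c(\lambda,q)$ and simplifying the resulting power of $2(1-q)$, is identified with $\kappa(\lambda,q)$. The three branches of \eqref{prox-a-rt} follow at once, and $\varpi_q(|a|)$ is identified as the unique root $z_2$ of the stationarity equation lying in $(z_0,\infty)$.

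The bounds in \eqref{prox-lq-loca-strong-convex} fall out of the same picture: any nonzero $z^* \in \mathrm{Prox}_{\lambda|\cdot|^q}(a)$ must equal $\pm z_2$ with $|z^*| = z_2 \geq c(\lambda,q)$, and the monotonicity of $z \mapsto z^{q-2}$ then gives $\varphi''(|z^*|) = 1 - \lambda q(1-q)|z^*|^{q-2} \geq 1 - \lambda q(1-q) c^{q-2} = 1 - q/2 > 1/2$ since $q \in [0,1)$. The single step requiring genuine care is the algebraic identification $g(c(\lambda,q)) = \kappa(\lambda,q)$; the rest is transparent one-variable calculus.
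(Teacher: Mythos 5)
Your proof is correct, but it takes a genuinely different route from the paper's. The paper's own proof is essentially a citation: the $q=0$ branch is delegated to the hard-thresholding literature, the $q\in(0,1)$, $a>0$ branch to \cite[Proposition 2.4]{chen2014global}, the lower bound $|z^*|\geq c(\lambda,q)$ to four further references, and only the final inequality $\varphi''(|z^*|)=1-\lambda q(1-q)|z^*|^{q-2}\geq 1-q/2>1/2$ is verified in-house (by exactly the monotonicity argument you give). You instead supply a self-contained one-variable analysis: reduction to $a>0$ by the identity $\varphi(z)-\varphi(-z)=-2za$, classification of the positive critical points via the convexity of $g(z)=z+\lambda q z^{q-1}$, elimination of $a$ through stationarity to get $\varphi(0)-\varphi(z_2)=\tfrac12 z_2^2-\lambda(1-q)z_2^q$, and transport of the resulting $z$-threshold $c(\lambda,q)$ back to $a$-space. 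This buys a proof that simultaneously yields the closed form of the prox, the uniqueness of $\varpi_q$ (as the unique root in $(z_0,\infty)$, which is the right reading of the paper's slightly ambiguous uniqueness claim, since for $a>g(z_0)$ the stationarity equation has \emph{two} positive roots), and the lower bound $|z^*|\geq c(\lambda,q)$ in one pass; the cost is length relative to the paper's two-line deferral.

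One caveat on the step you yourself flag as "requiring genuine care": carrying out the identification $g(c(\lambda,q))=\kappa(\lambda,q)$ against the printed definition \eqref{prox-a-ct} does \emph{not} close. Your (correct) threshold is
\begin{equation*}
g(c(\lambda,q))=c(\lambda,q)\cdot\frac{2-q}{2(1-q)}=(2-q)\,\lambda^{\frac{1}{2-q}}\bigl(2(1-q)\bigr)^{\frac{q-1}{2-q}},
\end{equation*}
i.e.\ the exponent on $2(1-q)$ is $\frac{q-1}{2-q}=\frac{1-q}{q-2}$, whereas \eqref{prox-a-ct} prints $\frac{q+1}{q-2}$. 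The two expressions coincide at $q=0$ (both give $\sqrt{2\lambda}$, matching hard thresholding) and at $q=1/2$ (the base is $1$), which is presumably why the discrepancy went unnoticed, but they differ for every other $q\in(0,1)$; a direct check at $q=2/3$ confirms your formula and not the printed one. So the printed $\kappa$ contains a sign typo ($q+1$ for $1-q$), and your derivation, if pushed through the algebra rather than asserted, would expose it. This does not affect the validity of your argument, but you should not claim the identification with the literal $\kappa(\lambda,q)$ of \eqref{prox-a-ct} without noting the correction.
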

\begin{proof}  If $q=0$, then it is easy to compute that $\kappa(\lambda,0)=\sqrt{2\lambda}, ~c(\lambda,0)=\sqrt{2\lambda}$,  and $\varpi_ 0(a)=a\in \{z: z-a = 0\}$. These by \cite{attouch2013convergence, zhou2021newton} can show that ${\rm Prox}_{\lambda  |\cdot|_0^0}$ takes the form of \eqref{prox-a-rt}.  Next, we focus on $q\in(0,1)$. The result is obvious when $a=0$. When $a>0$, the result follows from \cite[Proposition 2.4]{chen2014global}.  When  $a<0$, it has $ {\rm Prox}_{\lambda |\cdot|^q} (-a)= -  {\rm Prox}_{\lambda |\cdot|^q} (a)$ and we can consider $-a>0$, which also leads to the conclusion.
% that, 
%\begin{eqnarray}\label{p-norm-+}
% \eqspace{1}
%\begin{array}{lll}   
% {\rm Prox}_{\lambda  |\cdot|^q} (a)=\left\{\begin{array}{lll}
%\{0\}, &a< \kappa(\lambda,q),\\
%\left\{0, c(\lambda,q) \right\},& a=  \kappa(\lambda,q),\\
%\left\{\varpi_ q (a)\right\}, &a> \kappa(\lambda,q),
%\end{array} \right.
%\end{array}
%\end{eqnarray} 
%where  $\varpi_ q (a)$ is unique and $\varpi_ q (a)\in \{t:  z-a+\lambda  q z^{q-1} = 0, z>0\}$.  

  The lower bound in \eqref{prox-lq-loca-strong-convex} for the magnitudes of non-zero entries has been established in many publications, such as \cite[Corollary 2.1]{ito2013variational}, \cite[Theorem 2.1]{chen2010lower}, \cite[Theorem 2.3]{lu2014iterative}, \cite[Lemma 2.1]{wu2022globally}. We omit the proof for simplicity. 
%provide proof here for completeness. If a non-zero point  $z^*\in {\rm Prox}_{\lambda |\cdot|^q} (a)$, then to show $|z^*|\geq c(\lambda,q) >0$ we only need to prove $\varpi_ q (|a|) \geq c(\lambda,q)$. We note from \cite[Proposition 12.19]{RW1998} that $ {\rm Prox}_{\lambda |\cdot|^q} (\cdot)$ is monotone. Moreover, 
%\begin{eqnarray*} 
% \eqspace{1}
%\begin{array}{lll}   
%\sign(a)\varpi_ q (|a|)&\in& {\rm Prox}_{\lambda |\cdot|^q} (a)~  \text{if}~  |a|> \kappa(\lambda,q),\\
%   \sign(a) c(\lambda,q) &\in& {\rm Prox}_{\lambda |\cdot|^q} (\kappa(\lambda,q)).
%\end{array}
%\end{eqnarray*} 
%Hence, we obtain $ |\sign(a)\varpi_ q (|a|)|\geq | \sign(a) c(\lambda,q) |$, resulting in $\varpi_ q (|a|) \geq c(\lambda,q)$. 
Finally,  $\varphi''(|z^*|)=1- \lambda q (1-q) |z^*|^{q-2} \geq 1- q/2  > 1/2$ due to $|z^*|\geq c(\lambda,q)$, finishing the proof.
\end{proof}

We  point  out that the proximal operator in \eqref{prox-a-rt} admits closed forms when $q=0$ by \eqref{prox-a-rt}, $q=1/2$ \cite{xu2012,chen2014global}, and $q=2/3$ \cite{krishnan2009fast, cao2013fast, chen2016computing}, which are given as follows.
\begin{itemize}
\item When $q=0$, it follows from \eqref{prox-a-rt} that
\begin{eqnarray}\label{proximal-l-0}
 \eqspace{1}
 {\rm Prox}_{\lambda  |\cdot|^0} (a)=\left\{\begin{array}{lll}
\{0\}, &|a|<\sqrt{2\lambda},\\
\{0,a\},~~& |a|=\sqrt{2\lambda},\\
\{a\}, &|a|>\sqrt{2\lambda}.
\end{array} \right.
\end{eqnarray}
\item When $q={1}/{2}$, by letting  $\phi:={\rm arcos}(\frac{\lambda}{4}(\frac{|a|}{3})^{-3/2})$, it follows from \cite{xu2012,chen2014global} that
\begin{eqnarray}\label{proximal-l-1/2}
\eqspace{1.5}
 {\rm Prox}_{\lambda  |\cdot|^{1/2}} (a)=\left\{\begin{array}{lll}
\{0\}, &|a|<  {3\lambda^{2/3}}/{2},\\
\left\{0,\sign(a)\lambda^{2/3}\right\},& |a|= {3\lambda^{2/3}}/{2},\\
\Big\{\frac{4a}{3} \cos^2\Big(\frac{\pi-\phi}{3}\Big)\Big\},~~ &|a|> {3\lambda^{2/3}}/{2}.
\end{array} \right.
\end{eqnarray} 
\item When $q={2}/{3}$, by letting
\begin{eqnarray*}
 \eqspace{1}
 \begin{array}{lll}\psi:= \left( \frac{a^2}{2} + \sqrt{\frac{a^4}{4} - \left(\frac{8\lambda}{9}\right)^3 }\right)^{1/3}+\left( \frac{a^2}{2} - \sqrt{\frac{a^4}{4} - \left(\frac{8\lambda}{9}\right)^3 }\right)^{1/3},\end{array} 
\end{eqnarray*} 
it follows from \cite{krishnan2009fast, cao2013fast, chen2016computing} that
\begin{eqnarray}\label{proximal-l-2/3}
\eqspace{1.5}
 {\rm Prox}_{\lambda  |\cdot|^{2/3}} (a)=\left\{\begin{array}{lll}
\{0\}, &|a|<  2(2\lambda/3)^{3/4},\\
\left\{0,\sign(a)(2\lambda/3)^{3/4}\right\},& |a|=  2(2\lambda/3)^{3/4},\\
\Big\{\frac{\sign(a)}{8}\Big( \sqrt{\psi}+\sqrt{2|a|/\sqrt{\psi}-\psi}\Big)^3\Big\},~~ &|a|>  2(2\lambda/3)^{3/4}.
\end{array} \right.
\end{eqnarray}
\end{itemize}
 For other choices $q\in(0,1)$, it can be computed efficiently by an algorithm proposed in \cite{chen2016computing} or the Newton method mentioned in \cite{peng2018global} (which will be developed in Section \ref{sec:pdnp}, see Algorithm \ref{algorithm-proxmalq}). Moreover, \eqref{prox-lq-loca-strong-convex} implies that if proximal operator ${\rm Prox}_{\lambda  |\cdot|^q} (a)$  contains a non-zero element, then the module of the element has a positive lower bound $c(\lambda,q)$, which only depends on $\lambda, q$ and is irrelevant to $a$.  This property is beneficial to establish the convergence results in Section \ref{sec:global-convergence}.
\section{Optimality Analysis}\label{sec:opt-ana}
Based on the proximal operator of $|\cdot|^q$, we can calculate the one of the $L_q$ norm by
 \begin{eqnarray*}  
 \begin{array}{lll}
{\rm Prox}_{\lambda\|\cdot\|_q^q}(\bx)=\left\{\bz\in\R^n: ~w_i\in {\rm Prox}_{\lambda|\cdot|^q}(x_i),~ i=1,2,\ldots,n\right\}, 
\end{array}
\end{eqnarray*}
which allows us to define a P-stationary point for \eqref{rso} as follows. 
\begin{definition}[{\bf P-stationary point}]\label{p-stationary-point}
A point $\bx^*$ is called a P-stationary point  of   problem (\ref{rso}) if there exists an $\alpha>0$ such that
\begin{eqnarray}\label{alpha-stationary-point-0}
\bx^{*} \in \prox (\bx^{*}-\alpha \nabla f(\bx^{*}) )
\end{eqnarray}
\end{definition}
Based on the above definition, we have
\begin{eqnarray}\label{alpha-stationary-point}
 \begin{array}{lll}
\bx^{*} \in   {\rm argmin}_{\bx}~ J(\bx;\bx^*):= \|\bx-(\bx^{*}-\alpha \nabla f(\bx^{*}) )\|^2+2\alpha\lambda\|\bx\|^q_q.
\end{array} 
\end{eqnarray} 
Since solution $0$ to problem (\ref{rso}) is trivial and not the objective we are aiming for, we need to exclude it from the set of P-stationary points.  This can be accomplished by selecting $\lambda$ to be smaller than a certain threshold as follows.
\begin{lemma}\label{exclude-0} Suppose   $\nabla f(0)\neq 0$. Then for a given $\alpha>0$ and for any $\lambda$ satisfying
\begin{eqnarray} \label{bound-lambda}
 \eqspace{1}
 \begin{array}{lll}
0< \lambda < \overline{\lambda}:= \frac{\alpha^{1-q} }{2}\|\nabla f(0)\|_{\infty}^{2-q},
\end{array}
\end{eqnarray}
  point $0$ is not a P-stationary point with  $\alpha>0$.
\end{lemma}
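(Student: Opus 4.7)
I would argue by contradiction. Assume $0$ \emph{is} a P-stationary point with the given $\alpha>0$; that is, $0\in\mathrm{Prox}_{\alpha\lambda\|\cdot\|_q^q}(-\alpha\nabla f(0))$. Since the regularizer $\|\cdot\|_q^q=\sum_i|\cdot|^q$ is separable, the proximal operator factors coordinatewise, so the inclusion is equivalent to
\begin{equation*}
0\in\mathrm{Prox}_{\alpha\lambda|\cdot|^q}\bigl(-\alpha\nabla_i f(0)\bigr)\qquad\text{for every }i=1,\dots,n.
\end{equation*}
Proposition \ref{proximal-phi} characterizes exactly when $0$ lies in $\mathrm{Prox}_{\mu|\cdot|^q}(a)$: this happens if and only if $|a|\le\kappa(\mu,q)$. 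Applying this with $\mu=\alpha\lambda$ and $a=-\alpha\nabla_i f(0)$ gives the coordinatewise bound $\alpha|\nabla_i f(0)|\le\kappa(\alpha\lambda,q)$ for every $i$, which after taking the maximum over $i$ becomes
\begin{equation*}
\alpha\,\|\nabla f(0)\|_\infty\le\kappa(\alpha\lambda,q).
\end{equation*}

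Next, I substitute the explicit formula for $\kappa$ from \eqref{prox-a-ct} and raise both sides to the power $2-q$ so that the $(\alpha\lambda)^{1/(2-q)}$ factor becomes linear in $\lambda$. Solving the resulting inequality for $\lambda$ produces a lower bound of the form $\lambda\ge C(q)\,\alpha^{1-q}\|\nabla f(0)\|_\infty^{2-q}$, where $C(q)$ is a purely $q$-dependent constant obtained by collecting the $(2-q)$ and $2(1-q)$ factors. The hypothesis $\nabla f(0)\ne 0$ guarantees the right-hand side is strictly positive, so the bound is nontrivial. The proof is completed by checking that $C(q)\ge 1/2$ throughout $q\in[0,1)$; if so, then $\lambda<\overline{\lambda}=\tfrac{\alpha^{1-q}}{2}\|\nabla f(0)\|_\infty^{2-q}\le C(q)\alpha^{1-q}\|\nabla f(0)\|_\infty^{2-q}$ contradicts the derived lower bound, so $0$ cannot be P-stationary.

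The only nontrivial step is verifying $C(q)\ge 1/2$ for all $q\in[0,1)$, since the coefficient $1/2$ in $\overline{\lambda}$ is chosen for simplicity rather than sharpness. This is a one-variable calculus check: define $h(q):=C(q)$, compute $\frac{d}{dq}\ln h(q)$, observe that the derivative reduces to $\ln\frac{2-q}{2(1-q)}$ which is nonnegative on $[0,1)$, so $h$ is nondecreasing, and evaluate the boundary value $h(0)=1/2$. The remaining manipulations are routine algebra, so I do not anticipate a genuine obstacle; the conceptual core is simply reading off the characterization of $\mathrm{Prox}_{\lambda|\cdot|^q}$ at the origin from Proposition \ref{proximal-phi} and inverting the resulting scalar inequality in $\lambda$.
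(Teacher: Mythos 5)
Your proof is correct, but it takes a genuinely different route from the paper's. The paper never invokes the explicit threshold $\kappa$: it argues directly from the variational definition \eqref{alpha-stationary-point} by constructing a single competitor point supported on one coordinate $s$ with $|\nabla_s f(0)|=\|\nabla f(0)\|_\infty$ and value $x^*_s=-\alpha\nabla_s f(0)$, and showing that $\lambda<\overline{\lambda}$ forces this point to have a strictly smaller objective in $J(\cdot\,;0)$ than $0$ does, a contradiction. Your argument instead reads off the exact scalar characterization from Proposition \ref{proximal-phi} ($0\in\mathrm{Prox}_{\mu|\cdot|^q}(a)$ iff $|a|\le\kappa(\mu,q)$), which turns P-stationarity of $0$ into the sharp equivalent condition $\lambda\ge C(q)\,\alpha^{1-q}\|\nabla f(0)\|_\infty^{2-q}$ and then reduces the lemma to the monotonicity check $C(q)\ge C(0)=1/2$. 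Your approach buys more: it identifies the exact threshold and shows the paper's constant $1/2$ is tight only at $q=0$; the paper's approach buys robustness, since it needs only the argmin definition and not the closed form of $\kappa$.

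One caution: your verification hinges on the precise formula for $\kappa(\lambda,q)$, and the expression printed in \eqref{prox-a-ct} appears to carry a typo. Deriving the threshold from the equal-value and first-order conditions gives $\kappa(\lambda,q)=\tfrac{2-q}{2(1-q)}\,(2\lambda(1-q))^{1/(2-q)}$, i.e.\ the exponent on $2(1-q)$ should be $\tfrac{1-q}{q-2}$ rather than $\tfrac{q+1}{q-2}$ (the two coincide at $q=0$ and are indistinguishable at $q=1/2$ where $2(1-q)=1$, which is why the misprint is easy to miss). With the corrected formula one gets $C(q)=(2(1-q))^{1-q}/(2-q)^{2-q}$, whose logarithmic derivative is exactly the $\ln\tfrac{2-q}{2(1-q)}\ge 0$ you state, so your monotonicity check and the boundary value $C(0)=1/2$ go through. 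Had you used the printed exponent literally, you would get $C(q)=(2(1-q))^{q+1}/(2-q)^{2-q}\to 0$ as $q\to 1^-$ and the check $C(q)\ge 1/2$ would fail; so make the corrected formula (or its derivation) explicit rather than citing \eqref{prox-a-ct} verbatim.
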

\begin{proof} Suppose  $0$ is a P-stationary point with  $\alpha>0$. Then it has 
\begin{eqnarray*} 
 \begin{array}{lll}
0 \in   {\rm argmin}~  \|\bx+ \alpha \nabla f(0) \|^2+2\alpha\lambda\|\bx\|^q_q.
\end{array} 
\end{eqnarray*}
 Now we consider a point $\bx^*$ with $x^*_{s }=-\alpha \nabla_{s } f(0) $ and $\bx^*_i=0$ for any $i\neq s $, where $s $ is one of the indices satisfying $ |\nabla_{s } f(0)| = \|\nabla f(0)\|_{\infty}$, 
which together with $\overline{\lambda}\geq \lambda$ in condition \eqref{bound-lambda} results in $2\alpha\lambda|\alpha\nabla_s f(0)|^q <  {\alpha^2 |\nabla_{s } f(0)|^2}$. This allows us to derive that
\begin{eqnarray*} 
&& \|\bx^*+ \alpha \nabla f(0) \|^2+ 2\alpha\lambda\|\bx^*\|^q_q\\
&\geq&   \|0+ \alpha \nabla f(0) \|^2+2\alpha\lambda \|0\|^q_q\\% =  \alpha^2 \| \nabla f(0) \|^2\\
&=&  \alpha^2 \begin{array}{l}
{\sum}_{i\neq s }
\end{array}  | \nabla_i f(0) |^2 + \alpha^2 | \nabla_{s } f(0) |^2\\
&>&  \alpha^2  \begin{array}{l}
{\sum}_{i\neq s }
\end{array} | \nabla_i f(0) |^2 +  \alpha^2 (x^*_{s } + \alpha \nabla_{s } f(0))^2 + 2\alpha\lambda|\alpha \nabla_{s } f(0)|^q\\
&=& \|\bx^*+ \alpha \nabla f(0) \|^2+ 2\alpha\lambda\|\bx^*\|^q_q.
\end{eqnarray*}
The above contradiction means that $0$ is not a P-stationary point with  $\alpha>0$.
\end{proof}
The assumption, $\nabla f(0)\neq 0$, is reasonable. If $\nabla f(0) = 0$, then $0$ is a stationary point in terms of $0\in \partial F(0)$, resulting in the triviality of solving problem \eqref{rso}.  Therefore,   we will always assume that $\nabla f(0)\neq 0$ and condition \eqref{bound-lambda} to prevent $0$ from being a  P-stationery point in the sequel if no additional explanations are specified. Moreover, $\overline{\lambda}$ in \eqref{bound-lambda} can be used as an upper bound for $\lambda$ in the numerical experiments. 

\subsection{Some properties of P-stationary points} 
Hereafter for a given point $\bx^*$,  we always denote its support set as
$$\CS^*:=\supp(\bx^*).$$
We note that $\CS^*\neq \emptyset$ if $\bx^*$  is a P-stationary point as it is not $0$ from Lemma \ref{exclude-0}. 
%This support set allows us to determine a subspace $\S$ by
%\begin{eqnarray}\label{sub-space} 
%\begin{array}{lll} 
%\S :=& \{\bx\in\R^n: \supp(\bx) = \CS^*\},
% \end{array}
%\end{eqnarray}
% where $\supp(\bx)$ is the support set of $\bx$ containing all indices of its non-zero entries. Then we establish the second-order optimality conditions for the original problem by considering the following constrained optimization problem,
% \begin{eqnarray}\label{def-eta-c} 
% %\begin{array}{l}
%\min_{\bx\in\S}~E(\bx):=f(\bx)+\lambda \sum_{i\in\supp(\bx)}|x_i|^q,
%%\end{array}
%\end{eqnarray}
%due to $F(\bx^*)=E(\bx^*)$. The advantage of the above problem  is that objective function $E(\cdot)$ is twice continuously differentiable on $\S$. 
\begin{lemma}\label{lemma-grad-gamma-0} A local minimizer $\bx^*$  of (\ref{rso}) satisfies   $\nabla_{\CS^*} F(\bx^*)=0$ and a P-stationary point  $\bx^*$  with an $\alpha>0$  satisfies  
\begin{eqnarray}\label{char-P-sta}
\begin{array}{l}
\nabla_{\CS^*} F(\bx^*)=0,~~ \|\nabla_{\overline\CS^*} f(\bx^*)\|_{\infty}\leq \frac{\kappa(\alpha\lambda,q)}{\alpha}.
\end{array}
\end{eqnarray}
\end{lemma}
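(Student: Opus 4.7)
The plan is to handle the two claims separately, each reducing to essentially a coordinatewise argument via the separability of $\|\cdot\|_q^q$.

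For the local minimizer claim, I would restrict attention to the affine subspace $\{\bx : x_i = 0 \text{ for all } i \in \overline{\CS^*}\}$ through $\bx^*$. On a sufficiently small ball in this subspace around $\bx^*$, every coordinate indexed by $\CS^*$ stays bounded away from $0$ (since $x_i^* \neq 0$ there by definition of the support), so the map $\bx \mapsto \sum_{i \in \CS^*} |x_i|^q$ is $C^\infty$ on this ball. Combined with the differentiability of $f$, the restricted objective is smooth, and since $\bx^*$ is a local minimizer of $F$ it is in particular a local minimizer of this restricted smooth function. Standard first-order optimality then yields $\nabla_{\CS^*} F(\bx^*) = 0$.

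For the P-stationary point claim, the key observation is that $\|\bx\|_q^q = \sum_i |x_i|^q$ separates, so the inclusion $\bx^* \in \mathrm{Prox}_{\alpha\lambda\|\cdot\|_q^q}(\bx^* - \alpha \nabla f(\bx^*))$ decouples into the coordinatewise inclusions
\begin{equation*}
x_i^* \in \mathrm{Prox}_{\alpha\lambda |\cdot|^q}\bigl(x_i^* - \alpha \nabla_i f(\bx^*)\bigr), \qquad i=1,\ldots,n.
\end{equation*}
I then invoke Proposition \ref{proximal-phi} in two cases. For $i \in \CS^*$ (so $x_i^* \neq 0$), the third branch of \eqref{prox-a-rt} forces $x_i^*$ to satisfy the stationarity equation of $\varphi$, namely $x_i^* - (x_i^* - \alpha \nabla_i f(\bx^*)) + \alpha\lambda q \,\mathrm{sgn}(x_i^*)|x_i^*|^{q-1} = 0$, which simplifies to $\nabla_i f(\bx^*) + \lambda q\, \mathrm{sgn}(x_i^*)|x_i^*|^{q-1}=0$, i.e.\ $\nabla_i F(\bx^*) = 0$. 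Collecting over $i \in \CS^*$ gives the first part of \eqref{char-P-sta}.

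For $i \in \overline{\CS^*}$ (so $x_i^* = 0$), the inclusion becomes $0 \in \mathrm{Prox}_{\alpha\lambda|\cdot|^q}(-\alpha \nabla_i f(\bx^*))$, and inspecting the three branches of \eqref{prox-a-rt} shows this forces $|-\alpha \nabla_i f(\bx^*)| \leq \kappa(\alpha\lambda, q)$. Dividing by $\alpha$ and taking the maximum over $i \in \overline{\CS^*}$ yields the bound $\|\nabla_{\overline{\CS^*}} f(\bx^*)\|_\infty \leq \kappa(\alpha\lambda,q)/\alpha$. There is no real obstacle here; the only care needed is in writing out the first-order condition for the nontrivial branch of the proximal operator correctly, and in noting that the second (boundary) branch of \eqref{prox-a-rt} also falls under the $|a| \leq \kappa(\alpha\lambda,q)$ estimate when the chosen selector is $0$.
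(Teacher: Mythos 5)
Your proposal is correct, and the two halves compare differently with the paper's proof. For the P-stationary claim, your route is essentially the paper's in coordinatewise clothing: the paper applies Fermat's rule to the minimization \eqref{alpha-stationary-point} to get $0\in\alpha\partial F(\bx^*)$ (hence $\nabla_{\CS^*}F(\bx^*)=0$), and then reads off $|\alpha\nabla_i f(\bx^*)|\le\kappa(\alpha\lambda,q)$ for $i\notin\CS^*$ from \eqref{prox-a-rt}, exactly as you do; your explicit decoupling of the prox and the stationarity equation $z-a+\alpha\lambda q\,\sign(z)|z|^{q-1}=0$ is just the concrete form of the same inclusion. For the local-minimizer claim, you genuinely diverge: the paper cites external results (a theorem of Beck--Eldar for $q=0$ and the generalized Fermat rule of Rockafellar--Wets for $q\in(0,1)$), whereas you restrict $F$ to the affine subspace $\{\bx: \bx_{\overline{\CS}^*}=0\}$, observe that the restriction is smooth near $\bx^*$ (constant $\|\cdot\|_0$, or $C^\infty$ powers $|x_i|^q$ away from zero), and apply classical first-order optimality. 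Your argument is more elementary and self-contained, and it handles $q=0$ and $q\in(0,1)$ uniformly, at the cost of not exhibiting the full inclusion $0\in\partial F(\bx^*)$ that the paper gets for free. One small point of care in your nonzero-coordinate case: when $|a|=\kappa(\alpha\lambda,q)$ the prox set is $\{0,\sign(a)c(\alpha\lambda,q)\}$ rather than the third branch of \eqref{prox-a-rt}, but the nonzero selector there still satisfies the stationarity equation of $\varphi$ (that is how $c(\alpha\lambda,q)$ is determined), so your conclusion $\nabla_i F(\bx^*)=0$ survives; it would be worth one sentence to say so, symmetric to the remark you already make about the boundary branch in the zero-coordinate case.
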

\begin{proof} Let $\bx^*$  be a local minimizer. 
For $q=0$,  the result follows from \cite[Theorem 2.1]{Beck13}. For $q\in(0,1)$,  \cite[Theorem 10.1]{RW1998} leads to $0\in \partial F(\bx^*)$, delivering $\nabla_{\CS^*} F(\bx^*)=0$. Let $\bx^*$  be a P-stationary point. By  \eqref{alpha-stationary-point} and \cite[Theorem 10.1]{RW1998}, we have
$$0\in \bx^{*}-(\bx^{*}-\alpha \nabla f(\bx^{*}) )+\alpha\lambda \partial \|\bx^{*}\|^q_q=\alpha \partial F(\bx^*),$$ 
contributing to $\nabla_{\CS^*} F(\bx^*)=0$. Again, by \eqref{alpha-stationary-point} and \eqref{prox-a-rt}, for any $i\notin \CS^*$, we have $| \alpha \nabla_i f(\bx^{*})| =  |(\bx^{*}-\alpha \nabla f(\bx^{*})_i|\leq \kappa(\alpha\lambda,q),$ as desired.
\end{proof}
 \subsection{First-order optimality conditions}
 The subsequent theorem shows that a P-stationary point has a close relationship with a local minimizer of problem \eqref{rso}.
\begin{theorem}[{\bf First-order necessary condition}]\label{First-order-necessary-condition} Let  $\bx^*$  be a local minimizer of  (\ref{rso}). Suppose $f$ is $LC^1$ at $\bx^*$. Then there is an $\epsilon_*>0$ such that $\bx^*$ is a $P$-stationary point with $0<\alpha<\alpha_*:=\min\{\alpha_1^*,\alpha_2^*\}$, where 
\begin{eqnarray*}
\begin{array}{lll}
\alpha_1^*:=  \frac{\epsilon_*}{2\lambda\|\bx^*\|_q^q+2\sqrt{\lambda^2\|\bx^*\|_q^{2q}+\epsilon_* \|\nabla f(\bx^*)\|^2}},~~\alpha_2^*:=\frac{1}{\sup_{\bx\in {\mathbb N}( \bx^*, \epsilon_*)} \sup_{\H\in\partial^2 f(\bx)}\|\H\|}. 
\end{array} 
\end{eqnarray*}  
\end{theorem}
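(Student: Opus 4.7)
The plan is to recognize that the P-stationarity condition \eqref{alpha-stationary-point-0}--\eqref{alpha-stationary-point} says exactly that $\bx^*$ is a global minimizer of
\[
J(\bx;\bx^*) := \|\bx - (\bx^* - \alpha\nabla f(\bx^*))\|^2 + 2\alpha\lambda\|\bx\|_q^q,
\]
so the proof reduces to showing that
\[
J(\bx;\bx^*) - J(\bx^*;\bx^*) = \|\bx - \bx^*\|^2 + 2\alpha\langle \bx - \bx^*, \nabla f(\bx^*)\rangle + 2\alpha\lambda(\|\bx\|_q^q - \|\bx^*\|_q^q) \geq 0
\]
for every $\bx\in\R^n$. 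First I would fix an $\epsilon_* > 0$ small enough that (i) $\bx^*$ minimizes $F$ over $\mathbb{N}(\bx^*,\epsilon_*)$, by local minimality, and (ii) $\nabla f$ is Lipschitz on $\mathbb{N}(\bx^*,\epsilon_*)$, by the LC$^1$ hypothesis; the second ensures via \cite[Sect.\,2 ($P_1$)]{hiriart1984generalized} that the supremum defining $1/\alpha_2^*$ is finite, so $\alpha_2^*>0$. The argument then splits into an interior and an exterior case.

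\textbf{Interior case.} For $\bx\in\mathbb{N}(\bx^*,\epsilon_*)$, local minimality gives $\lambda(\|\bx\|_q^q - \|\bx^*\|_q^q)\ge f(\bx^*) - f(\bx)$, and the generalized mean value theorem \eqref{MVT} supplies $f(\bx) - f(\bx^*) = \langle \nabla f(\bx^*), \bx-\bx^*\rangle + \tfrac{1}{2}\langle \H(\bx-\bx^*), \bx-\bx^*\rangle$ for some $\H\in\partial^2 f(\bz)$ with $\bz\in\co\{\bx,\bx^*\}$. Substituting into the expansion of $J(\bx;\bx^*) - J(\bx^*;\bx^*)$ causes the two $\langle\nabla f(\bx^*),\bx-\bx^*\rangle$ contributions to cancel and leaves $J(\bx;\bx^*) - J(\bx^*;\bx^*) \geq (1-\alpha\|\H\|)\|\bx-\bx^*\|^2 \geq 0$ whenever $\alpha\le \alpha_2^*$.

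\textbf{Exterior case.} For $\bx\notin\mathbb{N}(\bx^*,\epsilon_*)$ the segment $\co\{\bx,\bx^*\}$ may exit the LC$^1$ region, so the generalized MVT is unavailable. Instead, I would use only the crude universal bounds $\|\bx\|_q^q \geq 0$ and Young's inequality $2\alpha|\langle \bx-\bx^*,\nabla f(\bx^*)\rangle|\le \tfrac{1}{2}\|\bx-\bx^*\|^2 + 2\alpha^2\|\nabla f(\bx^*)\|^2$, and then exploit $\|\bx-\bx^*\|^2\ge \epsilon_*$ to conclude
\[
J(\bx;\bx^*) - J(\bx^*;\bx^*)\ \ge\ \tfrac{\epsilon_*}{2} - 2\alpha^2\|\nabla f(\bx^*)\|^2 - 2\alpha\lambda\|\bx^*\|_q^q.
\]
Requiring nonnegativity yields a quadratic inequality in $\alpha$ whose positive root, after rationalization, has exactly the form of the stated $\alpha_1^*$.

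\textbf{Main obstacle.} The delicate part is choosing a single $\epsilon_*$ that validates both estimates simultaneously and verifying that $\alpha_*=\min\{\alpha_1^*,\alpha_2^*\}$ is strictly positive. Positivity of $\alpha_2^*$ hinges on $\partial^2 f$ being locally bounded on $\mathbb{N}(\bx^*,\epsilon_*)$, which follows from the LC$^1$ hypothesis, while $\alpha_1^*>0$ is automatic. Once $\epsilon_*$ is fixed, the interior and exterior estimates cover $\R^n$ exhaustively, and P-stationarity of $\bx^*$ for every $\alpha\in(0,\alpha_*)$ follows.
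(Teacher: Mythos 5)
Your proposal is correct, and it reaches the theorem by a genuinely different organization of the same two estimates. The paper argues by contradiction: it takes an arbitrary point $\bz^*$ in ${\rm Prox}_{\alpha\lambda\|\cdot\|_q^q}(\bx^*-\alpha\nabla f(\bx^*))$, uses the proximal inequality to show $\|\bz^*-\bx^*\|^2\leq 4\alpha^2\|\nabla f(\bx^*)\|^2+4\alpha\lambda\|\bx^*\|_q^q<\epsilon_*$ (this is where $\alpha_1^*$ enters, as the positive root of the same quadratic you rationalize), and then applies local optimality together with the generalized mean value theorem \eqref{MVT} to force $\bz^*=\bx^*$. You instead verify directly that $\bx^*$ globally minimizes $J(\cdot;\bx^*)$ by splitting $\R^n$ into $\mathbb{N}(\bx^*,\epsilon_*)$ and its complement: on the interior you combine local optimality with \eqref{MVT} exactly as the paper does (with $\alpha_2^*$ controlling the Hessian term), while on the exterior you use Young's inequality and $\|\bx-\bx^*\|^2\geq\epsilon_*$, with $\alpha_1^*$ guaranteeing positivity. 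So $\alpha_1^*$ plays dual roles in the two proofs — in the paper it localizes the candidate proximal point so that the MVT applies; in yours it kills the far-field contribution outright — but the underlying inequality is identical. Your version is arguably cleaner in that it never needs to reason about which points lie in the proximal set, at the cost of an explicit case split; the paper's version only ever invokes the MVT along the single segment $\co\{\bx^*,\bz^*\}$. One small point worth flagging: your derivation produces $\|\nabla f(\bx^*)\|^2$ inside $\alpha_1^*$, whereas the theorem statement displays $\|\nabla^2 f(\bx^*)\|^2$; the paper's own proof also uses $\|\nabla f(\bx^*)\|^2$ (and $\nabla^2 f(\bx^*)$ need not even exist under the $LC^1$ hypothesis), so the statement contains a typo and your reading is the intended one.
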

\begin{proof} As $\bx^*$  is a local minimizer, there is a $\varepsilon_*>0$ satisfying $\bx^*\in{\rm argmin}\{F(\bx):\bx\in\mathbb{N}(\bx^*,\varepsilon_*)\}$. Since $f$ is $LC^1$ at $\bx^*$,   there is an $\epsilon_*\in(0,\varepsilon_*)$ such that  $\nabla f(\cdot)$ is Lipschitz on $\mathbb{N}(\bx^*,\epsilon_*)$, resulting the boundedness of  $\partial^2 f(\cdot)$ on ${\mathbb N}( \bx^*, \epsilon_*)$ and consequently $\alpha_2^* > 0$. This means $\alpha\in(0,\alpha_*)$ well defined.   Suppose $\bx^*$ is not a $P$-stationary point with such an $\alpha$. Consider any point satisfying $\bz^{*}  \in \prox (\bx^{*}-\alpha \nabla f(\bx^{*}) )=:{\mathbb P}$ and denote $\bd^*:=\bz^*-\bx^*$.
This indicates
\begin{eqnarray*}
 \eqspace{1}
\begin{array}{lll}
\frac{1}{2}\|\bz^*-(\bx^{*}-\alpha \nabla f(\bx^{*}) )\|^2+\alpha\lambda\|\bz^*\|^q_q \leq\frac{1}{2}\|\alpha \nabla f(\bx^{*}) \|^2+\alpha\lambda\|\bx^*\|^q_q.
\end{array} 
\end{eqnarray*} 
After simple manipulations, the above inequality leads to
\begin{eqnarray}\label{z-x-alpha}
\begin{array}{lll}
&& \langle \bd^*,  \nabla f(\bx^{*}) \rangle + \lambda \|\bz^*\|^q_q - \lambda \|\bx^*\|^q_q \leq -\frac{1}{2\alpha}\|\bd^*\|^2, 
\end{array} 
\end{eqnarray} 
which by $\langle \bd^*,  \nabla f(\bx^{*}) \rangle  \geq -\|\bd^*\|^2/(4\alpha)- \alpha \|\nabla f(\bx^{*}) \|^2 $ and $\alpha\leq\alpha_*\leq\alpha_1^*$ yields
   \begin{eqnarray*} \eqspace{1}
 \begin{array}{lll}   \| \bd^*\|^2 &\leq &   4\alpha^2 \|\nabla f(\bx^*)\|^2 +   4\alpha  \lambda \|\bx^*\|^q_q   < \epsilon_*.
 \end{array}  \end{eqnarray*} 
Hence $\bw^*\in  \mathbb{N}(\bx^*,\epsilon_*)$ and so is $\bfz^*$ for any $\bfz^*\in \co\{\bx^*,\bw^*\}$. As a result,  $\|\H^*\|\leq 1/ \alpha_2^*\leq 1/ \alpha_*$ for any $\H^*\in\partial ^2 f(\bfz^{*})$, which by \eqref{MVT} allows us to derive that
\begin{eqnarray*} 
0&\leq& 2f(\bz^*) + 2\lambda\|\bz^*\|^q_q  - 2f(\bx^*) - 2\lambda\|\bx^*\|^q_q  \\ 
&=&  2\langle \bd^*,  \nabla f(\bx^{*}) \rangle  +   \langle \H^* \bd^*, \bd^* \rangle + 2\lambda \|\bz^*\|^q_q  - 2\lambda\|\bx^*\|^q_q  \\ 
&\leq& ( 1/\alpha_* -  {1}/\alpha  )\| \bd^*\|^2\leq 0,
\end{eqnarray*}
where the three inequalities are due to the local optimality of $\bx^*$ and  $\bw^*\in  \mathbb{N}(\bx^*,\epsilon_*)$,   \eqref{z-x-alpha}, and $0<\alpha<\alpha_*$, respectively,   showing the desired result.
\end{proof} 
\begin{remark}Similar results to the above theorem have been achieved in \cite{Beck13, zhou2021newton}. But it stated that any global minimizer is a P-stationary point if $\nabla f$ is Lipschitz on $\R^n$ \cite{Beck13}  or $f$ is strongly smooth on $\R^n$ \cite{zhou2021newton}. Here, our theorem shows that any local minimizer is a P-stationary point as long as $f$ is $LC^1$. Therefore, according to \eqref{Lip-Smooth-continuous}, we derive a stronger result but under a weaker condition. \end{remark}
 \subsection{Second-order optimality conditions}
% We define a function $E(\cdot;\cdot)$ by
%\begin{eqnarray}\label{def-eta-c}
%\begin{array}{l}
%E(\bx;\CS):=f(\bx)+\lambda \|\bx_{\CS}\|^q,
%%\nabla^2_{\CS} E( \bx ):=\nabla^2_{\CS} f(\bx)+\lambda \nabla^2 \|\bx_{\CS}\|^q_q.
%\end{array}
%\end{eqnarray}
Recalling the definition in \eqref{def-eta-c}, one can observe that if $\CS=\supp(\bx)$, then 
\begin{eqnarray}\label{partial-2-E}
\partial^2_{\CS} E( \bx )=\partial^2_{\CS} f(\bx)+\lambda \nabla^2 \|\bx_{\CS}\|^q_q
\end{eqnarray}
due to $\|(\cdot)_{\CS}\|^q_q$ is twice continuously differentiable at $\bx$. 
\begin{theorem}[{\bf Second-order necessary or sufficient  condition}] Suppose  $f$ is $LC^1$ at $\bx^*$. Then the following statements are valid. \label{second-order-necessary-condition} ~
\begin{itemize}
\item[1)] If $\bx^*$ is a local minimizer  of problem (\ref{rso}), then it satisfies
\begin{eqnarray}\label{2nd-neccssary-cond}
\begin{array}{l}
\langle \M^*\bv, \bv \rangle \geq  0,~~\exists~\M^*\in \partial^2_{\CS^*} E( \bx^*),~\forall~\bv\in\R^{|\CS^*|}.\qquad
\end{array}
\end{eqnarray}
\item[2)] If $\bx^*$ is a P-stationary point  of problem (\ref{rso}), then it is also a uniquely local minimizer if  it satisfies
\begin{eqnarray}\label{2nd-sufficient-cond}
\begin{array}{l}
\langle \M^*  \bv, \bv \rangle >  0,~~\forall~\M^*\in \partial^2_{\CS^*}  E( \bx^*),~\forall~\bv(\neq0)\in\R^{|\CS^*|}.
\end{array}
\end{eqnarray}
Namely, there is an $\eta_*{>}0$ and a neighbourhood $\N$ of $\bx^*$ satisfying
\begin{eqnarray}\label{2nd-sufficient-cond-growth}
\begin{array}{l}
 F(\bx ) \geq F(\bx^{*}) + \eta_*\|\bx-\bx^{*}\|^2,~~\forall ~\bx\in  \N.
\end{array}
\end{eqnarray}
\end{itemize}
\end{theorem}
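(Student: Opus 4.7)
The plan is to reduce both parts to the analysis of the auxiliary function $E(\cdot;\CS^*)$, which Proposition \ref{pro-bd} makes $LC^1$ at $\bx^*$ with a uniformly bounded generalized Hessian on some neighborhood, so that the mean value formula \eqref{MVT} applies. The key structural observation is that for $\bx$ close to $\bx^*$ the entries $x^*_i$ with $i\in\CS^*$ stay nonzero by continuity, hence $\|\bx\|_q^q=\|\bx_{\CS^*}\|_q^q+\|\bd_{\overline{\CS^*}}\|_q^q$ with $\bd:=\bx-\bx^*$, yielding the clean decomposition
\[
F(\bx)-F(\bx^*) \;=\; \bigl[E(\bx;\CS^*)-E(\bx^*;\CS^*)\bigr] \;+\; \lambda\|\bd_{\overline{\CS^*}}\|_q^q.
\]
Moreover, Lemma \ref{lemma-grad-gamma-0} delivers $\nabla_{\CS^*} F(\bx^*)=0$, which by \eqref{partial-2-E} makes the $\CS^*$-components of $\nabla E(\bx^*;\CS^*)$ vanish, so any first-order piece of an expansion collapses onto $\langle\nabla_{\overline{\CS^*}} f(\bx^*),\bd_{\overline{\CS^*}}\rangle$.

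For part (1), fix any $\bv\in\R^{|\CS^*|}$ and probe with $\bd_t=(t\bv;\mathbf{0})$, $t\downarrow 0$, so $\|\bd_{t,\overline{\CS^*}}\|_q^q=0$ and the collapse above kills the linear term; local optimality then forces $\tfrac{t^2}{2}\langle (\M_t)_{\CS^*}\bv,\bv\rangle \geq 0$ for some $\M_t\in\partial^2 E(\bfz_t;\CS^*)$ with $\bfz_t\to\bx^*$. The uniform bound \eqref{bound-E-S} and outer semicontinuity of the generalized Hessian extract a limit $\M^*\in\partial^2 E(\bx^*;\CS^*)$ with the desired $\langle \M^*_{\CS^*}\bv,\bv\rangle\geq 0$. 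For part (2), applying \eqref{MVT} to the full direction $\bd$ yields
\[
F(\bx)-F(\bx^*)\;=\;\langle\nabla_{\overline{\CS^*}} f(\bx^*),\bd_{\overline{\CS^*}}\rangle + \tfrac{1}{2}\langle \M\bd,\bd\rangle + \lambda\|\bd_{\overline{\CS^*}}\|_q^q
\]
for some $\M\in\partial^2 E(\bfz;\CS^*)$. Combining \eqref{2nd-sufficient-cond} with the compactness of $\partial^2_{\CS^*}E(\bx^*)$ and outer semicontinuity supplies a uniform $\eta>0$ such that $\langle \M_{\CS^*}\bd_{\CS^*},\bd_{\CS^*}\rangle\geq 2\eta\|\bd_{\CS^*}\|^2$ whenever $\bfz$ is near $\bx^*$; the off-diagonal and $\overline{\CS^*}\times\overline{\CS^*}$ blocks of $\M$ are controlled by \eqref{bound-E-S}, and Young's inequality absorbs the cross terms, leaving a quadratic-part residue of the form $\tfrac{\eta}{2}\|\bd_{\CS^*}\|^2-C\|\bd_{\overline{\CS^*}}\|^2$ for some constant $C$.

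To close the estimate I must make the off-support quantity $\lambda\|\bd_{\overline{\CS^*}}\|_q^q+\langle\nabla_{\overline{\CS^*}} f(\bx^*),\bd_{\overline{\CS^*}}\rangle$ dominate a positive multiple of $\|\bd_{\overline{\CS^*}}\|^2$. Here P-stationarity is crucial: \eqref{char-P-sta} gives $|\alpha\nabla_i f(\bx^*)|\leq\kappa(\alpha\lambda,q)$ for every $i\in\overline{\CS^*}$, which by Proposition \ref{proximal-phi} makes $0$ a minimizer of the scalar $\varphi_i(z)=\tfrac{1}{2}(z+\alpha\nabla_i f(\bx^*))^2+\alpha\lambda|z|^q$; summing $\varphi_i(d_i)\geq\varphi_i(0)$ coordinate-wise yields $\lambda\|\bd_{\overline{\CS^*}}\|_q^q+\langle\nabla_{\overline{\CS^*}} f(\bx^*),\bd_{\overline{\CS^*}}\rangle\geq-\tfrac{1}{2\alpha}\|\bd_{\overline{\CS^*}}\|^2$. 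Since $q<1$ the term $|d_i|^q$ dominates $d_i^2$ as $|d_i|\to 0$, so splitting $\lambda\|\bd_{\overline{\CS^*}}\|_q^q$ into two halves lets me combine one half with this proximal inequality and use the other half to beat $(C+\tfrac{1}{2\alpha})\|\bd_{\overline{\CS^*}}\|^2$ plus a positive margin $\eta'\|\bd_{\overline{\CS^*}}\|^2$; choosing $\eta_*=\min\{\eta/2,\eta'\}$ then delivers \eqref{2nd-sufficient-cond-growth}, and uniqueness of the local minimizer follows immediately from the strict quadratic inequality. The main obstacle I expect is precisely this off-support step: the P-stationary inequality may be saturated ($|\alpha\nabla_i f(\bx^*)|=\kappa$), so the scalar proximal subproblem produces only non-strict domination of $0$, and the required positive margin must be extracted from the $|d_i|^q$-versus-$d_i^2$ scaling on a carefully shrunken neighborhood rather than from any fixed strict inequality at $\bx^*$.
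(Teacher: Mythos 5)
Your part 1) is essentially the paper's argument verbatim: probe along $\bd=(t\bv;\mathbf{0})$, expand $E(\cdot;\CS^*)$ via \eqref{MVT}, and pass to the limit using the bound \eqref{bound-E-S} and outer semicontinuity of the generalized Hessian. For part 2) you deploy the same ingredients as the paper --- the decomposition $F(\bx)-F(\bx^*)=E(\bx;\CS^*)-E(\bx^*;\CS^*)+\lambda\|\bd_{\overline{\CS}^*}\|_q^q$, the mean value formula \eqref{MVT}, the block decomposition of the generalized Hessian with Young's inequality for the cross terms, the bound $\|\nabla_{\overline{\CS}^*}f(\bx^*)\|_\infty\leq\kappa(\alpha\lambda,q)/\alpha$ from \eqref{char-P-sta}, and the domination of $|d_i|^q$ over $|d_i|$ and $d_i^2$ as $d_i\to0$ --- but you package them as a direct, uniform proof of the growth estimate \eqref{2nd-sufficient-cond-growth} on a shrunken neighbourhood, whereas the paper argues by contradiction along a ray $\bx^k=\bx^*+\bd/k$ and sends $k\to\infty$. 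Your packaging buys something: it makes the neighbourhood and the constant $\eta_*$ explicit, and it avoids the paper's implicit reduction of the negated growth condition to a single fixed direction $\bd$; the price is that you must upgrade \eqref{2nd-sufficient-cond} to a uniform coercivity bound for all $\M\in\partial^2_{\CS^*}E(\bfz)$ with $\bfz$ near $\bx^*$, which is legitimate (it is exactly the compactness/outer-semicontinuity argument the paper runs later to prove \eqref{2nd-sufficient-cond-new}). One step is stated loosely: ``combining one half of $\lambda\|\bd_{\overline{\CS}^*}\|_q^q$ with the proximal inequality'' does not literally work, since the inequality $\lambda|d_i|^q+d_i\nabla_if(\bx^*)\geq-\tfrac{1}{2\alpha}d_i^2$ consumes the full coefficient $\lambda$ (it encodes $0\in{\rm Prox}_{\alpha\lambda|\cdot|^q}(-\alpha\nabla_if(\bx^*))$, which need not hold for $\lambda/2$) and in any case delivers no positive margin. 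The repair is the one you already name in your closing sentence, and it is what the paper does: drop the scalar proximal subproblem, bound $|d_i\nabla_if(\bx^*)|\leq(\kappa(\alpha\lambda,q)/\alpha)|d_i|$ directly, and note that for $\|\bd_{\overline{\CS}^*}\|_\infty$ below a threshold depending only on $\lambda,q,\alpha$ and your constants $C,\eta'$ one has $\lambda|d_i|^q\geq(\kappa(\alpha\lambda,q)/\alpha)|d_i|+(C+\eta')d_i^2$ coordinatewise (trivially so when $q=0$). With that substitution your proof is complete and correct.
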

\begin{proof} Before we prove the results, we claim some useful facts.  By Proposition \ref{pro-bd},
there exists a neighbourhood  $\mathbb{N}^*$ of $\bx^*$ and a positive constant $c_*$ satisfying 
\begin{eqnarray}  
\label{bound-E-ssS}\sup\{\|\M\|:\M\in\partial^2 E(\bx;\CS^*), \forall \bx\in\mathbb{N}^* \}\leq c_*.\end{eqnarray} 
It is evident that $\nabla_{\CS^*} E( \bx^*) = \nabla_{\CS^*}  F(\bx^{*})$ due to definition  \eqref{def-eta-c}. If $\bx^*$ is a locally optimal solution or a P-stationary point,  $\nabla_{\CS^*} F( \bx^*)   =0$ from Lemma \ref{lemma-grad-gamma-0}, thereby  
\begin{eqnarray}  
\label{E-S-0}\nabla_{\CS^*} E( \bx^*) =0.\end{eqnarray}  
1) For any $\bv\in\R^{|\CS^*|}$ and sufficiently small $\tau>0$,  by letting $\bd_{\CS^*}=\bv$ and $\bd_{\overline{\CS}^*}=0$, the local optimality enables us to derive \eqref{2nd-neccssary-cond} immediately due to
\begin{eqnarray*} %\label{gap-F-F}
0&\leq& 2F(\bx^*+\tau \bd) - 2F(\bx^*)
=2E( \bx^*+\tau \bd;\CS^*) - 2E( \bx^*;\CS^*)\\ 
%&=&2\tau \langle \nabla  E( \bx^{*};\CS^*), \bd_{\CS^*} \rangle +  \tau ^2 \langle \M^\tau \bd_{\CS^*}, \bd_{\CS^*}\rangle   \\ 
&=&2\tau \langle \nabla_{\CS^*}  E( \bx^{*}), \bv \rangle +  \tau ^2 \langle \M^\tau \bv, \bv\rangle =  \tau ^2 \langle \M^\tau   \bv, \bv\rangle,
\end{eqnarray*} 
where $\M^\tau\in \partial^2_{\CS^*} E( \bx^{\tau})$ and $\bx^{\tau}\in {\rm co}\{\bx^*+\tau \bd,\bx^*\}$ and the second equality is from \eqref{MVT}, resulting in $\langle \M^\tau  \bv, \bv\rangle\geq 0$. Note that $\bx^\tau\in\mathbb{N}^*$ for sufficiently small $\tau$ and thus   sequence  $\{\M^\tau\}$ is bounded owing to \eqref{bound-E-ssS}, namely it has a subsequence converging to $\M^*$. It follows from  \cite[Sect. 2 ($P_2$)]{hiriart1984generalized}  that $\M^* \in \partial^2_{\CS^*} E( \bx^{*})$. Finally, by passing to the subsequence, taking the limit of $\langle \M^\tau \bv, \bv\rangle\geq 0$ along with $\tau\to0$ leads to $\langle \M^*  \bv, \bv\rangle\geq 0$, as desired.

2) Suppose the conclusion is not true. Then there exists a sequence $\{\bx^k=\bx^*+\bd/k\}$, where $\bd \in\R^n$ with $\bd_{\CS^*}\neq 0$, such that
 \begin{eqnarray} \label{k3-d}
\begin{array}{l}
(1/k^{3})\|\bd\|^2 = (1/k)\|\bx^k-\bx^*\|^2 > F(\bx^k)-F(\bx^*).
\end{array}
\end{eqnarray}
Since $f$ is LC$^1$ at $\bx^*$, $\nabla E(\cdot;\CS^*)$ is locally Lipschitz around $\bx^*$. Moreover, any point $\bfz^k\in\co\{\bx^k,\bx^*\}$ for sufficiently large $k$ is near $\bx^*$. Now by \eqref{MVT} we obtain
\begin{eqnarray} \label{gap-E-E-k}
2E( \bx^k;\CS^*) -   2E( \bx^*;\CS^*) &=& (1/k^2)\left\langle{\bf Q}^k \bd, \bd\right\rangle+ (2/k)\left\langle\nabla E(\bx^*), \bd\right\rangle \nonumber\\
&=& (1/k^2)\left\langle {\bf Q}^k \bd, \bd\right\rangle+ (2/k)\left\langle\nabla_{\overline{\CS}^*} E(\bx^*), \bd_{\overline{\CS}^*}\right\rangle  \nonumber\\
&=& (1/k^2)\left\langle {\bf Q}^k \bd, \bd\right\rangle+ (2/k)\left\langle\nabla_{\overline{\CS}^*} f(\bx^*), \bd_{\overline{\CS}^*}\right\rangle  \nonumber\\
&\geq&  (1/k^2)\left\langle {\bf Q}^k\bd, \bd\right\rangle-(t/ k)\| \bd_{\overline{\CS}^*}\|_1 ,
\end{eqnarray}
where ${\bf Q}^k  \in\partial^2 E(\bfz^k;\CS^*)$,  the second equality is from \eqref{E-S-0}, and the last inequality holds with $t:=2\kappa(\alpha\lambda,q)/\alpha $ by \eqref{char-P-sta}. Now, we decompose ${\bf Q}^k$ and define $\sigma_*$  as 
 \begin{eqnarray} \label{def-sigma-*-E}
%\begin{array}{l}
~~~~{\bf Q}^k=\left[\begin{array}{cc}
\M^k ~~  & {\bf Q}^k_{12} \\
{\bf Q}^{k\top}_{12} &   {\bf Q}^k_{\overline{\CS}^*} 
\end{array}\right] ,~~~\sigma_* := \inf_{\M^*\in \partial^2_{\CS^*} E(\bx^*)}\inf_{\bv\in\R^{|\CS^*|}} \frac{\langle\M^*\bv, \bv \rangle}{2\|\bv\|^2}>0,
%\end{array}
\end{eqnarray}
where $\M^k= {\bf Q}^k_{{\CS}^*}$ and ${\bf Q}^k_{12}$ is the upper diagonal block of ${\bf Q}^k$, and $\sigma_*>0$ due to \eqref{2nd-sufficient-cond}. Then we have
\begin{eqnarray} \label{dMd}
\left\langle {\bf Q}^k \bd, \bd\right\rangle&=&\left\langle\M^k \bd_{\CS^*}, \bd_{\CS^*}\right\rangle+2\left\langle \bd_{\CS^*}, {\bf Q}^k_{12} \bd_{\overline{\CS}^*}\right\rangle+\left\langle{\bf Q}^k_{\overline{\CS}^*}\bd_{\overline{\CS}^*}, \bd_{\overline{\CS}^*}\right\rangle\nonumber\\
&\geq&\left\langle\M^k\bd_{\CS^*}, \bd_{\CS^*}\right\rangle-\sigma_*\|\bd_{\CS^*}\|^2-( \|{\bf Q}^k_{12} \|^2/\sigma_*)\|\bd_{\overline{\CS}^*}\|^2-\|{\bf Q}^k _{\overline{\CS}^*}\|\|\bd_{\overline{\CS}^*}\|^2\nonumber\\
&\geq&\left\langle\M^k \bd_{\CS^*}, \bd_{\CS^*}\right\rangle-\sigma_*\|\bd_{\CS^*}\|^2-( c_*^2/\sigma_*+c_*)\|\bd_{\overline{\CS}^*}\|^2,
\end{eqnarray}
where the first inequality used a fact $2\langle\bx,\bw\rangle\leq ({1}/{\pi}) \|\bx\|^2+ \pi  \|\bw\|^2$ for any $\pi>0$ and the second one is due to \eqref{bound-E-ssS}. Recalling \eqref{k3-d}, we can obtain that
\begin{eqnarray*} %\label{gap-F-F}
(2/k^{3})\|\bd_{\CS^*}\|^2&>& 2F(\bx^k) - 2F(\bx^*)-(2/k^{3})\|\bd_{\overline{\CS}^*}\|^2 \\
&=& 2E( \bx^k;\CS^*)+2\lambda\|\bx^k_{\overline{\CS}^*}\|_q^q  -  2E( \bx^*;\CS^*)   -(2/k^{3})\|\bd_{\overline{\CS}^*}\|^2 \\ 
&\geq& (1/k^2)\left\langle{\bf Q}^k \bd, \bd\right\rangle-(t/k)\| \bd_{\overline{\CS}^*}\|_1 + (2\lambda/k^q)\|\bd_{\overline{\CS}^*}\|_q^q    -(2/k^{3})\|\bd_{\overline{\CS}^*}\|^2 \\
&\geq&  (2\lambda/k^q)\|\bd_{\overline{\CS}^*}\|_q^q -(t/k)\| \bd_{\overline{\CS}^*}\|_1 -(( c_*^2/\sigma_*+c_**)/k^2+2/k^{3})\|\bd_{\overline{\CS}^*}\|^2\\
&+& (1/k^2)\left\langle\M^k\bd_{\CS^*}, \bd_{\CS^*}\right\rangle-(\sigma_*/k^2)\|\bd_{\CS^*}\|^2\\
&\geq&   (1/k^2)\left\langle\M^k\bd_{\CS^*}, \bd_{\CS^*}\right\rangle-(\sigma_*/k^2)\|\bd_{\CS^*}\|^2,
\end{eqnarray*}
where the second inequality is due to $\|\bx^k_{\overline{\CS}^*}\|_q^q=\|\bd_{\overline{\CS}^*}/k\|_q^q$ and \eqref{gap-E-E-k}, the third inequality is from \eqref{dMd}, and the last one holds for sufficiently large $k$ as $q\in(0,1)$. The above condition immediately results in ${(2/{k})\|\bd_{\CS^*}\|^2}>  \left\langle\M^k\bd_{\CS^*}, \bd_{\CS^*}\right\rangle -\sigma_*\|\bd_{\CS^*}\|^2.$ 
By taking the limit (along a subsequence if necessary) of both sides of the above condition, condition \eqref{def-sigma-*-E} leads to $$0\geq  \frac{\left\langle\M^*\bd_{\CS^*}, \bd_{\CS^*}\right\rangle}{\|\bd_{\CS^*}\|^2}-\sigma_*\geq 2\sigma_*-\sigma_*>0.$$
This contradiction shows the result.
\end{proof}
To end this section, we pay attention to second-order sufficient condition \eqref{2nd-sufficient-cond} which can be guaranteed by the following condition. 
\begin{corollary}\label{suf-2nd-sufficient-cond} Let $\bx^*$ be a P-stationary point of  (\ref{rso}) with $\alpha>0$.  Suppose $f$ is $LC^1$ at $\bx^*$ and for any $\H^*\in \partial^2_{\CS^*} f(\bx^*)$ there is
\begin{eqnarray}\label{2nd-neccssary-cond-cs}
\begin{array}{l}
\lambda_{\min} :=\lambda_{\min} \left(\H^*  \right) > \frac{q}{2\alpha},
\end{array}\end{eqnarray}
where $\lambda_{\min}(\H^*)$ is the minimal eigenvalue of $\H^*$, then  (\ref{2nd-sufficient-cond}) holds.
\end{corollary}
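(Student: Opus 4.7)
The plan is to unpack $\M^*\in\partial^2_{\CS^*}E(\bx^*)$ using the decomposition \eqref{partial-2-E} and show that the regularizer Hessian cannot make $\M^*$ indefinite by more than $q/(2\alpha)$, which the assumption \eqref{2nd-neccssary-cond-cs} is designed to absorb. Concretely, any $\M^*\in\partial^2_{\CS^*}E(\bx^*)$ can be written as $\M^*=\H^*_{\CS^*}+\lambda D^*$ for some $\H^*\in\partial^2 f(\bx^*)$, where $D^*$ is the diagonal $|\CS^*|\times|\CS^*|$ matrix whose $i$-th diagonal entry is $q(q-1)|x_i^*|^{q-2}$ (interpreted as $0$ when $q=0$).

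The first step is to use P-stationarity to control the size of $|x_i^*|$ on $\CS^*$. Since $\bx^*\in\prox(\bx^*-\alpha\nabla f(\bx^*))$ and $x_i^*\neq 0$ for $i\in\CS^*$, Proposition \ref{proximal-phi} applied coordinatewise yields $|x_i^*|\geq c(\alpha\lambda,q)=(2\alpha\lambda(1-q))^{1/(2-q)}$. Raising both sides to the (negative) power $q-2$ reverses the inequality and gives $|x_i^*|^{q-2}\leq (2\alpha\lambda(1-q))^{-1}$, so
\begin{equation*}
\lambda q(q-1)|x_i^*|^{q-2}\;\geq\;-\lambda q(1-q)\cdot\frac{1}{2\alpha\lambda(1-q)}\;=\;-\frac{q}{2\alpha}.
\end{equation*}
Thus the diagonal matrix $\lambda D^*$ satisfies $\lambda D^*\succeq -\tfrac{q}{2\alpha}I$ on $\R^{|\CS^*|}$.

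The second step is to lower-bound the quadratic form of $\H^*_{\CS^*}$. Since $\H^*$ is symmetric, the Cauchy interlacing theorem gives $\lambda_{\min}(\H^*_{\CS^*})\geq \lambda_{\min}(\H^*)=\lambda_{\min}$; equivalently, for any $\bv\in\R^{|\CS^*|}$, extending by zero off $\CS^*$ yields $\langle \H^*_{\CS^*}\bv,\bv\rangle=\langle \H^*\tilde\bv,\tilde\bv\rangle\geq \lambda_{\min}\|\bv\|^2$. Combining the two bounds,
\begin{equation*}
\langle \M^*\bv,\bv\rangle\;=\;\langle \H^*_{\CS^*}\bv,\bv\rangle+\lambda\langle D^*\bv,\bv\rangle\;\geq\;\Bigl(\lambda_{\min}-\frac{q}{2\alpha}\Bigr)\|\bv\|^2\;>\;0,
\end{equation*}
for every $\bv\neq 0$, which is exactly \eqref{2nd-sufficient-cond}.

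There is no real obstacle here; the only moving piece is the algebraic identity $c(\alpha\lambda,q)^{q-2}=(2\alpha\lambda(1-q))^{-1}$, which makes the a priori magnitude lower bound from the proximal operator match the threshold $q/(2\alpha)$ in \eqref{2nd-neccssary-cond-cs} exactly. The $q=0$ case is covered uniformly because both the diagonal perturbation and the right-hand side $q/(2\alpha)$ vanish, reducing the hypothesis to $\lambda_{\min}(\H^*)>0$, which immediately yields positive definiteness of $\M^*=\H^*_{\CS^*}$.
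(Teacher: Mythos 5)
Your proposal is correct and follows essentially the same route as the paper: both use the P-stationarity lower bound $|x_i^*|\geq c(\alpha\lambda,q)$ from Proposition \ref{proximal-phi} together with the decomposition \eqref{partial-2-E} to show that the regularizer's (negative semidefinite) diagonal Hessian contribution is bounded below by $-\tfrac{q}{2\alpha}I$, which hypothesis \eqref{2nd-neccssary-cond-cs} absorbs. Your write-up is only slightly more explicit than the paper's in spelling out the interlacing/extension-by-zero step for the principal submatrix $\H^*_{\CS^*}$ and in treating $q=0$ uniformly rather than as a separate case.
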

\begin{proof} For $q=0$, $\partial^2_{\CS^*} E( \bx ^*)=\partial^2_{\CS^*} f( \bx^* )$ is positive definite due to $\lambda_{\min}>0$ hence condition (\ref{2nd-sufficient-cond}) holds. For $q\in(0,1)$, as $\bx^*$ is a solution to \eqref{alpha-stationary-point}, then $|x_i^*|\geq c(\alpha\lambda,q)$ by \eqref{prox-lq-loca-strong-convex} for any $i\in\CS^*$. Now one can verify that
\begin{eqnarray*} 
\lambda_{\min} -  \frac{\lambda q (1-q)}{\min _{i\in \CS^*} |x_i^*|^{2-q} } \geq 
\lambda_{\min} -  \frac{\lambda q (1-q)}{(c(\alpha\lambda,q) )^{2-q} } = \lambda_{\min} -  \frac{\lambda q (1-q)}{2\alpha\lambda (1-q)}  >0,
\end{eqnarray*}
where the equality and the last inequality are from  \eqref{prox-a-ct}  and \eqref{2nd-neccssary-cond-cs}. This condition and \eqref{partial-2-E} ensure the positive definiteness of any elements of $\partial^2_{\CS^*} E( \bx ^*)$, thereby guaranteeing condition (\ref{2nd-sufficient-cond}).
\end{proof}
\section{Proximal Semismooth Newton Pursuit} \label{sec:pdnp}
We begin by giving the Newton method to solve the proximal operator of $L_q$ norm, 
\begin{equation}
{\rm Prox}_{\lambda\|\cdot\|_q^q}(\bx) = {\rm argmin}_{\bw\in\R^n} ~ h(\bz;\bx):=\frac{1}{2}\|\bx-\bz\|^2+\lambda\|\bw\|_q^q.
\end{equation}
The scheme is presented in Algorithm \ref{algorithm-proxmalq}, where $\nabla^2 h(\bu^\ell;\bx_T)$ and $\nabla h(\bu^\ell;\bx_T)$ denote the Hessian and gradient of $h(\cdot;\bx_T)$ at $\bu^\ell$. As mentioned earlier, the proximal operator admits a closed-form solution when $q \in {0, 1/2, 2/3}$, as expressed by \eqref{proximal-l-0}, \eqref{proximal-l-1/2}, and \eqref{proximal-l-2/3}. For other choices of $q$, based on \eqref{prox-a-rt}, the proximal operator has non-zero entries only when $|x_i| > \kappa(\lambda, q)$. Therefore, we need to find entries $i \in T$ using the Newton method and set the remaining entries to $0$. According to numerical experiments, this process usually runs very quickly, requiring fewer than $5$ iterations to obtain the proximal operator for any $q \in [0, 1)$ accurately.
\begin{algorithm}[th] 
 \caption{ProxLq$(\bx,\lambda,q)$ }
\label{algorithm-proxmalq}
\begin{algorithmic}[1]
\If{$q\in\{0,1/2,2/3\}$}
\State Return ${\bf z}$ by \eqref{proximal-l-0}, \eqref{proximal-l-1/2}, and \eqref{proximal-l-2/3} accordingly.
\Else
\State Compute $T =\{i: |x_i|>\kappa(\lambda,q)\}$ using \eqref{prox-a-ct}, set ${\bf u}^{0}=\bx_T$ and $\ell = 0$. 
\While{$\|\nabla h({\bf u}^\ell;\bx_T)\|<10^{-8}$}
\State Compute $\bd^\ell =  ( \nabla^2 h({\bf u}^\ell;\bx_T))^{-1}\nabla h({\bf u}^\ell;\bx_T) $.
\State Find a finite integer $s_\ell\in\{0,1,2,\ldots\}$ such that 
\begin{eqnarray*} 
	 h(\bu^{\ell}-2^{-s_\ell}\bd^\ell;\bx_T)  \leq  h(\bu^{\ell};\bx_T) -  10^{-4} \|2^{-s_\ell}\bd^\ell\|^2, 
\end{eqnarray*}
 \State Update $\bu^{\ell+1}=\bu^{\ell}-2^{-s_\ell}\bd^\ell$ and $\ell=\ell+1.$
 \EndWhile
 \State Return ${\bf z}$ by ${\bf z}_T=\bu^{\ell+1}$ and ${\bf z}_{\overline T}=0$.
 
  \EndIf
\end{algorithmic}
\end{algorithm}

Now we are ready to develop the algorithm, PSNP, to solve problem \eqref{rso} based on the proximal operator of $L_q$ norm and the semismooth Newton method. More specifically,  the algorithmic frameworks consist of two major steps: 
\begin{itemize}
\item We first select a point $\bz^{k}$ from the proximal operator with a proper step size to reduce the objective function value in a desirable scale. That is, based on current point $\bx^k$, by applying Algorithm \ref{algorithm-proxmalq}, 
find an $\alpha_k$ such that %Let $s_k = 0, 1, \ldots$ be the smallest integer satisfying 
   \begin{eqnarray}    
   \label{armijio-descent-property} 
&&\bz ^{k}={\rm ProxLq}(\bx^{k}- \alpha_k \nabla f(\bx^k),\alpha_k \lambda,q) \in  {\rm Prox}_{\alpha_k \lambda \|\cdot\|^q_q} (\bx^{k}- \alpha_k \nabla f(\bx^k)),\\[1ex]
\label{armijio-descent-property-1} 
&&F(\bz ^{k})\begin{array}{l}\leq F(\bx^k)-  \frac{\sigma}{4}\|\bz ^{k}-\bx^{k}\|^{2}.\end{array}
\end{eqnarray}  
\item Then a subspace  $\{\bv\in\R^{n}:\bv_{\overline{\CS}^k}=0\}$ is determined by support set $\CS^k$ of $\bz ^{k}$, where
\begin{eqnarray}\label{supp-wk-S}
\CS^k:={\rm supp}(\bw^{k}), 
\end{eqnarray}
 and a semismooth Newton step is performed on the subspace to find a direction $\bd^k$. That is, choose $\M^k\in\partial^2_{\CS^k} E(\bw^k)$ and calculate  $\bd^{k}$ by solving the following equations, 
\begin{eqnarray}\label{Newton-descent-property}
	  &&\M^k \bd^{k}_{\CS^k}  =   \nabla_{\CS^k}  E(\bz ^{k}), ~~~~\bd^{k}_{\overline{\CS}^k}=0.
\end{eqnarray}
We note that notation $\nabla_{\CS^k}  E(\bz ^{k})$  is well defined due to \eqref{supp-wk-S}. However, to ensure a decreasing property of the function values, we also need to find a finite integer $s_k$ such that 
\begin{eqnarray}
    \label{Newton-descent-property-1}
	&& F(\bz ^{k}-\gamma^{s_k}\bd^k)\begin{array}{l}  \leq F(\bz ^{k})- \frac{\sigma}{2} \|\gamma^{s_k}\bd^k\|^2,\end{array}
\end{eqnarray}
where $\gamma\in(0,1)$. Finally, update $\beta_k=\gamma^{s_k}$ and $\bx^{k+1}=\bz ^{k}-\beta_k\bd^k$. 
\end{itemize}

In practice, equations \eqref{Newton-descent-property} may be unsolvable. In this case, we go back to  $\bz ^{k}$ and let $\bx^{k+1}=\bw^k$. In addition,  to accelerate the computation, one can execute Newton steps only when condition 
\begin{eqnarray}\label{switch}
\CS^k={\rm supp}(\bx^{k}) 
\end{eqnarray}
is satisfied. The motivation for using such a switch for Newton steps is that at the beginning the algorithm, $|\CS^k|$ may be large, resulting in a large system of equations \eqref{Newton-descent-property} to be solved. However, along with the iteration number rising, the support set of $\bx^{k}$ can be identified to be the same as $\CS^k$ and $\CS^k$ has a small magnitude, then performing a Newton step enables very fast convergence. Nevertheless, we design two algorithmic frameworks of the proximal Newton pursuit in Algorithms \ref{algorithm 1} and \ref{algorithm 2}. The only difference between them is whether condition \eqref{switch} needs to be satisfied to perform the semismooth Newton step in each iteration. 

\begin{algorithm}[th] 
 \caption{{PSNP}: Proximal Semismooth Newton pursuit \label{algorithm 1}} 
\begin{algorithmic}[1]
\State Initialize $\bx^{0}$, $\sigma >0$, $\gamma\in(0,1)$, and set $k = 0$. 
 \For{$k=0,1,2,\ldots$}
\State \textit{Proximal descent:}
\State Find an $\alpha_k$ such that \eqref{armijio-descent-property}  and \eqref{armijio-descent-property-1}.   

 \State Update $\bx^{k+1}=\bz ^{k}$ and $\CS^k={\rm supp}(\bz ^{k}).$
 
\State \textit{Semismooth Newton pursuit:} 
 \State Choose $\M^k\in\partial^2_{\CS^k} E(\bw^k)$.  
  \If{\eqref{Newton-descent-property} is solvable and   \eqref{Newton-descent-property-1} is met}
  \State  $\beta_k=\gamma^{s_k}$ and $\bx^{k+1}=\bz ^{k}-\beta_k\bd^k$. 
  \EndIf
 \EndFor
\end{algorithmic}
\end{algorithm}

\begin{algorithm}[th] 
 \caption{{PCSNP}: Proximal Conditional Semismooth Newton pursuit \label{algorithm 2}}
\begin{algorithmic}[1]
\State Initialize $\bx^{0}$, $\sigma >0$, $\gamma\in(0,1)$, and set $k = 0$. 
 \For{$k=0,1,2,\ldots$}
\State \textit{Proximal descent:} 
\State Find an $\alpha_k$ such that \eqref{armijio-descent-property}  and \eqref{armijio-descent-property-1}.   

 \State Update $\bx^{k+1}=\bz ^{k}$ and $\CS^k={\rm supp}(\bz ^{k}).$
 \State \textit{Conditional Semismooth Newton pursuit:}
  \If{Condition \eqref{switch} is met}
 \State Choose $\M^k\in\partial^2_{\CS^k} E(\bw^k)$.  
  \If{\eqref{Newton-descent-property} is solvable and   \eqref{Newton-descent-property-1} is met}
  \State  $\beta_k=\gamma^{s_k}$ and $\bx^{k+1}=\bz ^{k}-\beta_k\bd^k$. 
  \EndIf
    \EndIf
 \EndFor
\end{algorithmic}
\end{algorithm}

\begin{remark}\label{remark-alg} Regarding Algorithms \ref{algorithm 1} and \ref{algorithm 2}, we have the following comments.
\begin{itemize}
\item[A.] A practical way to find an $\alpha_k$ that satisfies
 condition (\ref{armijio-descent-property-1}) is to adopt the  Armijo line search. More precisely, for given constants $\tau>0$ and $\gamma\in(0,1)$, let $\alpha_k = \tau\gamma^{t_{k}}$, where $t_k = 0, 1, \ldots$  is the smallest integer such that $\alpha_k$ satisfying condition (\ref{armijio-descent-property-1}).  In the next sub-section, we will show that the generated sequence can converge for any $\alpha$ satisfying (\ref{def-alpha-k}). This condition (\ref{def-alpha-k}) can be ensured if $\alpha_k$ is generated by the Armijo line search. In fact, we can always choose
\begin{eqnarray*}  
\begin{array}{l}t_k \equiv \left\lceil \log_{\gamma}\Big(\frac{1}{\tau(\sigma + \sigma_0)}\Big) \right\rceil,~~\alpha_k = \tau\gamma^{t_k}, ~~\forall~k\geq 0,
\end{array}\end{eqnarray*} 
  where $\lceil a\rceil$  is the ceiling of $a$ and $\sigma_0$ is defined in (\ref{def-consts}), then (\ref{def-alpha-k}) is achieved.  A similar scheme is also used to search $\beta_k$. 
 
  \item[B.] To ensure  the solvability of equations (\ref{Newton-descent-property}), we expect $\M^k  =   \H^k +  \lambda \nabla^2 \|\bw^k_{\CS^k}\|^q_q$ is non-singular, where $\H^k\in\partial^2_{\CS^k}f(\bw^k)$.
%\begin{eqnarray*}  
%\nabla^2_{\CS^k} E( \bz^k ) =  
% \nabla_{\CS^k}^2 f(\bw^k) +  \lambda \nabla^2 \|\bw^k_{\CS^k}\|^q_q.
%\end{eqnarray*} 
For $q=0$,  it is non-singular if $\H^k$ is non-singular.  
For any $q\in(0,1)$, as $\nabla_{\CS^k}^2\|\bw^k\|^q_q$ is diagonal and non-singular, $\M^k$ is non-singular if all eigenvalues of matrix $(\nabla^2\|\bw^k_{\CS^k}\|^q_q)^{-1}\H^k$ are not equal to $-\lambda$. This scenario often occurs with high probability in numerical computations. Additionally, solving (\ref{Newton-descent-property}) is computationally efficient when $|\CS^k|\ll n$. Alternatively, in the case of a larger $|\CS^k|$ (which may occur at the beginning of the algorithms), we can employ the conjugate gradient method for resolution.
\item[C.] As we mentioned before, the only difference between  {PSNP} or {PCSNP} is whether condition (\ref{switch}) needs to be satisfied to perform the semismooth Newton step in each iteration. It is evident that the computational complexity of {PCSNP} is lower than {PSNP} when condition (\ref{switch}) is not satisfied. Nevertheless, according to Theorem \ref{lemma-accumulating-convergence}, condition (\ref{switch}) can be always guaranteed eventually, which indicates that {PCSNP} and {PSNP} tend to be the same.

 Fig. \ref{fig:self-com} demonstrates the performance of PSNP and PCSNP in solving the CS problem. For each $q$, PSNP   takes fewer iterations but longer computational time than PCSNP. This is reasonable because the former benefits from Newton steps more frequently than the latter.
 \end{itemize}
\end{remark}
  \begin{figure}[t]
\centering
\includegraphics[scale=0.95]{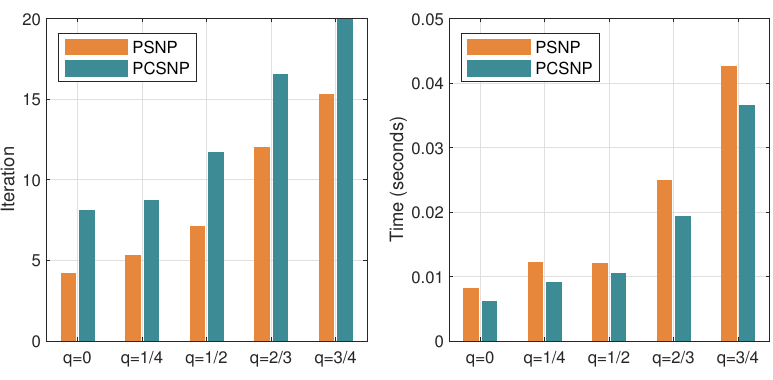} 
\caption{Comparison between PSNP and PCSNP solving CS problems. \label{fig:self-com}}
\end{figure}

\subsection{Convergence analysis}\label{sec:global-convergence}
In this subsection, we aim to establish the main convergence results. For a given $\bx^0$ and a scalar $\tau>0$, we first define some constants:
\begin{eqnarray}
\Omega^0 &:=&\{\bx\in\R^n: F(\bx)\leq F(\bx^0)\},\nonumber\\
\label{def-consts}c_0 &:=& {\sup}_{\bx\in \Omega^0}  5 \left( \| \bx\|^2 +  \tau ^2\|\nabla f(\bx)\|^2 +     \tau \lambda \|\bx\|^q_q \right),\\
\sigma_0&:=& {\sup}_{\bx\in{\mathbb N}( 0,c_0)}{\sup}_{\H\in\partial^2 f(\bx)} \|\H\|.\nonumber
\end{eqnarray}
We first present a sufficient condition to ensure the boundedness of $\sigma_0$, which is associated with a coercive function. We say $f$  is coercive if $\lim_{\|\bx\|\to\infty}f(\bx)=\infty$.
\begin{assumption}\label{assumptions-f}  $f$ is $LC^1$ on ${\mathbb N}( 0,c_0)$. Moreover, $f$ is coercive if $q=0$.
\end{assumption}
\begin{lemma} Under Assumption \ref{assumptions-f}, $\sigma_0<\infty$.
\end{lemma}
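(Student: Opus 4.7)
The plan is to establish $\sigma_0<\infty$ in three stages: compactness of the sublevel set $\Omega^0$, finiteness of $c_0$, and finally boundedness of the generalized Hessian on the bounded neighborhood $\mathbb{N}(0,c_0)$.

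First, I would show that $\Omega^0$ is compact. Closedness follows because $F=f+\lambda\|\cdot\|_q^q$ is lower semi-continuous: $f$ is continuous by hypothesis, $\|\cdot\|_q^q$ is continuous for $q\in(0,1)$, and lower semi-continuous for $q=0$. For boundedness I would split into cases. When $q=0$, Assumption \ref{assumptions-f} supplies coercivity of $f$, and since $\lambda\|\bx\|_0\ge0$ we get $F(\bx)\ge f(\bx)\to\infty$ as $\|\bx\|\to\infty$. When $q\in(0,1)$, I would use the elementary estimate $\|\bx\|_q^q\ge\|\bx\|_\infty^q\ge n^{-q/2}\|\bx\|^q$ together with the standing assumption that $f$ is bounded below (say by $f_*$) to obtain $F(\bx)\ge f_*+\lambda n^{-q/2}\|\bx\|^q\to\infty$ as $\|\bx\|\to\infty$. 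In either case $\Omega^0$ is a sublevel set of a coercive lsc function, hence compact.

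Next, I would deduce $c_0<\infty$. On the compact set $\Omega^0$, the continuous maps $\bx\mapsto\|\bx\|^2$, $\bx\mapsto\|\nabla f(\bx)\|^2$ (using $f\in C^1$), and $\bx\mapsto\|\bx\|_q^q$ (continuous for $q\in(0,1)$; bounded by $n$ for $q=0$) all attain finite maxima by Weierstrass. Summing and scaling by $5$, $\tau^2$, and $\tau\lambda$ yields $c_0<\infty$, so $\mathbb{N}(0,c_0)$ is a bounded subset of $\R^n$.

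Finally, under Assumption \ref{assumptions-f} the map $\nabla f$ is locally Lipschitz on $\mathbb{N}(0,c_0)$, so by \cite[Sect.~2 $(P_1)$]{hiriart1984generalized} the generalized Hessian $\partial^2 f(\bx)$ is uniformly bounded on this bounded set; this gives $\sigma_0<\infty$ and finishes the argument. The mildly delicate point is the $q\in(0,1)$ branch of the first step: although $\|\cdot\|_q^q$ is not a norm and not convex, the comparison with $\|\cdot\|_\infty^q$ still provides a genuine coercivity estimate that makes $\Omega^0$ bounded without any additional assumption on $f$ beyond being bounded below. All other steps are essentially routine consequences of continuity and the standard properties of the generalized Hessian.
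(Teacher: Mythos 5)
Your proposal is correct and follows essentially the same route as the paper: establish coercivity of $F$ (via coercivity of $f$ when $q=0$, and via coercivity of $\|\cdot\|_q^q$ plus boundedness below of $f$ when $q\in(0,1)$) to bound $\Omega^0$, hence $c_0$, and then invoke the local Lipschitz continuity of $\nabla f$ together with property $(P_1)$ of the generalized Hessian to bound $\sigma_0$. The only difference is that you spell out the coercivity of $\|\cdot\|_q^q$ and the compactness/Weierstrass details that the paper leaves implicit.
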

\begin{proof} For $q=0$, since $f$ is bounded from below,  $\|\cdot\|_0$ is bounded, and $f$ is coercive, we have $F(\cdot)=f(\cdot)+\lambda\|\cdot\|_0$ is coercive.  For $q\in(0,1)$, since $\|\cdot\|_q^q$ is coercive and $f$ is bounded from below, we have $F(\cdot)=f(\cdot)+\lambda\|\cdot\|_q^q$ is coercive. 
The coerciveness suffices to the boundedness of  $\Omega^0 $, thereby implying the boundedness of $c_0$, which by the local Lipschitz continuity of $\nabla f$ enables the boundedness of $\sigma_0$, see \cite[Sect. 2 ($P_1$)]{hiriart1984generalized}.
\end{proof}
Therefore, under Assumption \ref{assumptions-f} we can choose a universal $\alpha_k$ as
\begin{eqnarray}\label{def-alpha-k}
 \eqspace{1}
\begin{array}{l}
\alpha_k \equiv \alpha \in \left(0, \min\left\{\tau, \frac{1}{\sigma + \sigma_0}\right\}\right],~~\forall~k\geq 0.
\end{array}\end{eqnarray} 
%Here $\sigma_0$ is either the strong smooth constant of $f$ in Assumption \ref{assumptions-f-smooth} or the constant in \eqref{def-consts}. 
In the sequel, all the convergence results are established based on $\alpha_k$ chosen as \eqref{def-alpha-k}. We will not mention it for simplicity unless further explanations are needed.   Our first result shows that the above choice of $\alpha_k$ enables conditions \eqref{armijio-descent-property}  and \eqref{armijio-descent-property-1} to be satisfied so as to ensure a decreasing property of sequence $\{F(\bx^{k})\}$.
\begin{lemma} \label{theo05}   Let  $\{(\bw^k,\bx^{k})\}$ be the sequence generated by  {PSNP} or {PCSNP}. Under Assumption \ref{assumptions-f}, condition (\ref{armijio-descent-property-1}) can be ensured by (\ref{def-alpha-k}) and
\begin{eqnarray}\label{descent-lemma-property}
 \eqspace{1}
\begin{array}{lll}
F(\bx^{k+1})   \leq  F(\bx^k) -  \frac{\sigma}{4} \max\left\{ \|\bz ^{k}-\bx^k \|^2,  \|\bx^{k+1}-\bx^{k}\|^2\right\}.
\end{array}
\end{eqnarray}  
%Moreover, both $\{\bz ^k\} \subseteq  \Omega^0$ and $\{\bx^k\} \subseteq \Omega^0$ are bounded.
\end{lemma}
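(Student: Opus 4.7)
My plan is to prove both assertions simultaneously by induction on $k$, maintaining the invariant $\bx^k\in\Omega^0$ so that the generalized Hessian bound $\sigma_0$ is available along the relevant line segment. The base case $k=0$ is immediate. For the inductive step, I would first exploit the proximal optimality (\ref{armijio-descent-property}) by comparing $\bz^k$ with the competitor $\bw=\bx^k$, which after expanding the squared norm yields the fundamental inequality
\begin{equation*}
\langle \nabla f(\bx^k), \bz^{k}-\bx^k\rangle + \lambda\bigl(\|\bz^k\|_q^q-\|\bx^k\|_q^q\bigr) \leq -\frac{1}{2\alpha_k}\|\bz^k-\bx^k\|^2. \qquad (\star)
\end{equation*}

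From $(\star)$, Cauchy--Schwarz on $-\langle \nabla f(\bx^k), \bz^k-\bx^k\rangle$ combined with a weighted AM--GM split absorbs a fraction of the left-hand side into the right, producing a bound of the form $\|\bz^k-\bx^k\|^2 \leq 4\alpha_k^2\|\nabla f(\bx^k)\|^2 + 4\alpha_k\lambda\|\bx^k\|_q^q$. Because $\alpha_k\leq \tau$ and $\bx^k\in\Omega^0$, the constant in the definition of $c_0$ in (\ref{def-consts}) is tuned precisely so that $\|\bz^k\|^2\leq c_0$; therefore the whole segment $\co\{\bx^k,\bz^k\}$ lies in $\mathbb{N}(0,c_0)$, and every element of $\partial^2 f$ along this segment has norm at most $\sigma_0$. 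The generalized Hussain mean value identity (\ref{MVT}) then gives $f(\bz^k)\leq f(\bx^k)+\langle\nabla f(\bx^k),\bz^k-\bx^k\rangle+\frac{\sigma_0}{2}\|\bz^k-\bx^k\|^2$.

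Adding $\lambda\|\bz^k\|_q^q$ to both sides and invoking $(\star)$ to cancel the linear term delivers
\begin{equation*}
F(\bz^k)\leq F(\bx^k)-\frac{1}{2}\Bigl(\frac{1}{\alpha_k}-\sigma_0\Bigr)\|\bz^k-\bx^k\|^2,
\end{equation*}
and the constraint $\alpha_k\leq 1/(\sigma+\sigma_0)$ in (\ref{def-alpha-k}) forces the coefficient to be at least $\sigma/2$, which is exactly (\ref{armijio-descent-property-1}). For the max-type bound (\ref{descent-lemma-property}) I would split on whether the semismooth Newton update is accepted. If $\bx^{k+1}=\bz^k-\beta_k\bd^k$, condition (\ref{Newton-descent-property-1}) yields $F(\bx^{k+1})\leq F(\bz^k)-\frac{\sigma}{2}\|\bx^{k+1}-\bz^k\|^2$, and chaining with the previous bound produces $F(\bx^{k+1})\leq F(\bx^k)-\frac{\sigma}{2}(\|\bz^k-\bx^k\|^2+\|\bx^{k+1}-\bz^k\|^2)$; since $\|\bx^{k+1}-\bx^k\|^2\leq 2\|\bz^k-\bx^k\|^2+2\|\bx^{k+1}-\bz^k\|^2$, the right-hand side dominates both $F(\bx^k)-\frac{\sigma}{4}\|\bz^k-\bx^k\|^2$ and $F(\bx^k)-\frac{\sigma}{4}\|\bx^{k+1}-\bx^k\|^2$. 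In the rejected branch $\bx^{k+1}=\bz^k$ and (\ref{descent-lemma-property}) is immediate from (\ref{armijio-descent-property-1}). Either way $F(\bx^{k+1})\leq F(\bx^k)$, so $\bx^{k+1}\in\Omega^0$, closing the induction.

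The step I expect to be most delicate is verifying $\|\bz^k\|^2\leq c_0$: the AM--GM weights must be chosen so that the resulting bound in terms of $\|\bx^k\|^2$, $\alpha_k^2\|\nabla f(\bx^k)\|^2$, and $\alpha_k\lambda\|\bx^k\|_q^q$ actually fits inside the ball fixed by (\ref{def-consts}), which is what justifies invoking the Hessian bound $\sigma_0$ along the segment. Once this inclusion is secured, everything else is a routine combination of the proximal optimality $(\star)$ with the generalized-Hessian MVT, together with the standard two-case analysis for $\bx^{k+1}$.
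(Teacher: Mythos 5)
Your proposal is correct and follows essentially the same route as the paper's proof: the competitor comparison in the proximal step yielding $(\star)$, the AM--GM absorption to bound $\|\bz^k-\bx^k\|^2$ and hence place $\bz^k$ (and the segment $\co\{\bx^k,\bz^k\}$) inside $\mathbb{N}(0,c_0)$ via the tuned constant $c_0$, the generalized-Hessian mean value expansion with the bound $\sigma_0$, and the two-case analysis on acceptance of the Newton step. The only cosmetic difference is that you phrase the argument as a clean induction on the invariant $\bx^k\in\Omega^0$, whereas the paper spells out $k=0$ and $k=1$ explicitly and then says ``repeat''; the content is identical.
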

\begin{proof} 

 We prove the conclusion by induction. 
 
\underline{For $k=0$}, by denoting $\Delta^0:=\bz ^{0}-\bx^0$, it follows  from \eqref{armijio-descent-property} and $\alpha_0=\alpha$ that  
\begin{eqnarray*} 
\begin{array}{lll}
\frac{1}{2}\| \bz ^{0} -( \bx ^0 - \alpha   \nabla f(\bx^0))\|^2 + \alpha   \lambda \|\bz ^{0}\|^q_q \leq \frac{1}{2}\|   \alpha    \nabla f(\bx^0) \|^2 + \alpha   \lambda \|\bx^0\|^q_q ,
 \end{array} \end{eqnarray*}
 which results in
   \begin{eqnarray} \label{descent-lemma-fact-1}
 \begin{array}{lll}  \langle   \nabla f(\bx^0),  \Delta^0 \rangle + \lambda \|\bz ^{0}\|^q_q \leq -  \frac{1}{2\alpha   }\| \Delta^0 \|^2 +  \lambda \|\bx^0\|^q_q .
 \end{array}  \end{eqnarray}
 This condition also implies that
   \begin{eqnarray*} 
 \begin{array}{lll} \frac{1}{2\alpha   }\| \Delta^0\|^2 \leq \alpha   \|\nabla f(\bx^0)\|^2 +   \frac{1}{4\alpha   } \|\Delta^0\|^2   +  \lambda \|\bx^0\|^q_q 
 \end{array}  \end{eqnarray*}
by a fact $2\langle\bx,\bz\rangle \leq \pi\|\bx\|^2+(1/\pi)\|\bz\|^2$ for any $\pi>0$, thereby giving rise to
     \begin{eqnarray*} 
 \begin{array}{lll} \frac{1}{4}\| \Delta^0\|^2 \leq \alpha^2   \|\nabla f(\bx^0)\|^2     +  \alpha \lambda \|\bx^0\|^q_q\leq \tau^2   \|\nabla f(\bx^0)\|^2     +  \tau \lambda \|\bx^0\|^q_q.
 \end{array}  \end{eqnarray*}
due to  $\alpha \leq \tau$. Using the above condition, we can bound $\bz ^{0}$ by
   \begin{eqnarray} \label{descent-lemma-fact-11}
\| \bz ^{0}\|^2& \leq&
5\| \bx^{0}\|^2 + ({5}/{4}) \|\Delta^0\|^2 \nonumber\\
 &\leq&    5 \| \bx^{0}\|^2 +  5 \tau^2 \|\nabla f(\bx^0)\|^2 +   5 \tau   \lambda \|\bx^0\|^q_q  \leq  c_0,
\end{eqnarray} 
where the first inequality is from $\|\bz^0\|^2  \leq (1+\pi)\|\bx^0\|^2+(1+1/\pi)\|\Delta^0\|^2$ for any $\pi>0$ and the last one holds because of  \eqref{def-consts} and $\bx^0\in\Omega^0$. Since \eqref{descent-lemma-fact-11} and $\|\bx^0\|^2<c_0$ from \eqref{def-consts}, there is $\|\bfz^0\|^2<c_0$ for any  $\bfz^0\in{\rm co}\{\bx^0,\bw^0\}$ and thus  $\|\H^0\|\leq \sigma_0$ for any $\H^0\in\partial^2 f(\bfz^0)$ from \eqref{def-consts}, which by \eqref{MVT} gives rise to
\begin{eqnarray}\label{descent-lemma-fact-2}
F(\bz ^{0})
  &=&    f(\bx^0)+
  \langle  \nabla f(\bx^0),\Delta^0\rangle
+  (1/2) \langle \H^0 \Delta^0, \Delta^0 \rangle+  \lambda\|\bz ^{0}\|^q_q  \nonumber\\
&\leq&    f(\bx^0)+
  \langle  \nabla f(\bx^0),\Delta^0\rangle
+  ({\sigma_0}/{2})\|\Delta^0\|^2 +  \lambda\|\bz ^{0}\|^q_q  \nonumber\\ 
& {\leq}& f(\bx^0) + \lambda\|\bx^0\|^q_q - \left(  {1}/{(2\alpha)} -   {\sigma_0}/{2}  \right)\|\Delta^0\|^2\nonumber\\
&\leq& F(\bx^0)  -  ({\sigma}/{2}) \|\Delta^0\|^2, 
\end{eqnarray}
where the last two inequalities are due to  \eqref{descent-lemma-fact-1} and \eqref{def-alpha-k}. This verifies condition (\ref{armijio-descent-property-1}). Now by   Algorithm \ref{algorithm 1}, if equations \eqref{Newton-descent-property} are solvable  and  $\bd^0$ satisfies  \eqref{Newton-descent-property-1}, or by  Algorithm \ref{algorithm 2}, if condition \eqref{switch} is met, equations \eqref{Newton-descent-property} are solvable,  and  $\bd^0$ satisfies  \eqref{Newton-descent-property-1}, then $\bx^{1}= \bz ^{0}-\beta_0\bd^0$, leading to
\begin{eqnarray}\label{F-x-w-d}
F(\bx^{1})- F(\bx^0) &=& F(\bz ^{0}-\beta_0\bd^0) - F(\bx^0)\nonumber\\
 &\leq& F(\bz ^{0})  -  ({\sigma}/{2}) \|\bx^{1} -\bz ^{0}\|^2- F(\bx^0)\nonumber \\
&\leq&  -  ({\sigma}/{2})  \|\bz ^{0}-\bx^0 \|^2 -  ({\sigma}/{2})  \|\bx^{0+1}-\bz ^{0}\|^2\nonumber\\
&\leq&  -  ({\sigma}/{2})  \|\bz ^{0}-\bx^0\|^2~~\text{or}~~ -   ({\sigma}/{4}) \|\bx^{1}-\bx^{0}\|^2, 
\end{eqnarray}
where the first inequality is from \eqref{Newton-descent-property-1} and the last one is due to $\|\ba-\bb\|^2\leq 2\|\ba \|^2 + 2\|\bb|^2$. 
Otherwise, $\bx^{1}=\bz ^{0}$, which by \eqref{descent-lemma-fact-2} calls forth $F(\bx^{1})  
   -F(\bx^0)  \leq -  \frac{\sigma}{2} \|\bx^{1}-\bx^{0}\|^2.$
Therefore, both cases lead to
\begin{eqnarray}\label{descent-lemma-fact-3}
 \eqspace{1}
\begin{array}{lll}
F(\bx^{1})   \leq  F(\bx^0) - \frac{\sigma}{4} \max\left\{ \|\bz ^{0}-\bx^0 \|^2,  \|\bx^{1}-\bx^{0}\|^2\right\}.
\end{array}
\end{eqnarray}

\underline{For $k=1$}, conditions  \eqref{descent-lemma-fact-2} and \eqref{descent-lemma-fact-3} mean that both $\bz ^0$ and $\bx^1\in \Omega^0$, leading to $\| \bx ^{1}\|^2  < c_0$ from \eqref{def-consts}. Then same reasoning to show  \eqref{descent-lemma-fact-11} enables us to derive $  \| \bz ^{1}\|^2  \leq c_0$, which by the same reason to show \eqref{descent-lemma-fact-2} allows us to prove that $ F(\bz ^{1}) \leq F(\bx^1)  -  \frac{\sigma}{2} \|\bz ^{1}-\bx^1\|^2$ under \eqref{def-alpha-k}.
So (\ref{armijio-descent-property-1}) is satisfied. Again considering two cases $\bx^2=\bz ^1-\beta_1\bd^1$ or $\bx^2=\bz ^1$ leads to
\begin{eqnarray}\label{descent-lemma-fact-5} 
 \eqspace{1}
\begin{array}{lll}
F(\bx^{2})   \leq  F(\bx^1) - \frac{\sigma}{4} \max\left\{ \|\bz ^{1}-\bx^1 \|^2,  \|\bx^{2}-\bx^{1}\|^2\right\}.
\end{array}
\end{eqnarray}

\underline{For $k\geq2$},  repeating the above steps can show \eqref{descent-lemma-property}.
\end{proof}
\begin{remark}From the proof of the above descent lemma, we restrain sequence $\{\bx^k\}$ in a bounded level set $\Omega^0$ under Assumption \ref{assumptions-f}. Then the semismoothness of $\nabla f$  and (\ref{MVT}) lead to the descent property of  $\{F(\bx^k)\}$.  Hence,  the establishment does not necessitate the strong smoothness on the whole space. However,  the (restricted) strong smoothness of $f$ or Lipschitz continuity of $\nabla f$ on the whole space have been frequently assumed to achieve similar results for sparse optimization \cite{Beck13,bahmani2013greedy, zhou2021newton,wangaextended,zhou2021global}. In this regard,
we achieve the same result in a more relaxed condition.
\end{remark}
 Based on the above lemma, we have the following global convergence regarding the sequence of the objective function values and the accumulation point convergence.
\begin{lemma}\label{sequence-convergence}  Let  $\{(\bw^k,\bx^{k})\}$ be the sequence generated by  {PSNP} or {PCSNP}. If  Assumption \ref{assumptions-f} holds, then $\{F(\bx^{k})\}$ converges, $\bz ^{k}-\bx^k\to 0,~ \bx^{k+1}-\bx^{k}\to0$, and any accumulating point of $\{\bx^{k}\}$ is a  P-stationary point $\bx^*$ with $\alpha$ given as (\ref{def-alpha-k}). 
\end{lemma}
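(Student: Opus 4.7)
The plan is to assemble the three claims from the descent inequality already established in Lemma \ref{theo05} and then close the loop at an accumulation point by passing to the limit in the proximal inclusion \eqref{armijio-descent-property}.

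First, I would deduce convergence of $\{F(\bx^k)\}$. Since $f$ is bounded from below and $\|\cdot\|_q^q \geq 0$, the functional $F$ is bounded from below on $\R^n$. The inequality \eqref{descent-lemma-property} shows that $\{F(\bx^k)\}$ is monotonically non-increasing, so it converges to some limit $F^*$. Telescoping \eqref{descent-lemma-property} then gives
\begin{equation*}
\frac{\sigma}{4}\sum_{k=0}^{\infty}\max\bigl\{\|\bz^{k}-\bx^{k}\|^2,\ \|\bx^{k+1}-\bx^{k}\|^2\bigr\} \leq F(\bx^{0})-F^* < \infty,
\end{equation*}
which yields the vanishing-step conclusions $\bz^{k}-\bx^{k}\to 0$ and $\bx^{k+1}-\bx^{k}\to 0$ simultaneously.

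Next, let $\bx^*$ be any accumulation point and extract a subsequence $\bx^{k_j}\to\bx^*$. Since $\bz^{k_j}-\bx^{k_j}\to 0$, we also have $\bz^{k_j}\to\bx^*$. From \eqref{def-alpha-k} we have the constant step size $\alpha_{k_j}\equiv\alpha$. The variational characterization of \eqref{armijio-descent-property} says that for every $\by\in\R^n$,
\begin{equation*}
\tfrac{1}{2}\|\bz^{k_j}-(\bx^{k_j}-\alpha\nabla f(\bx^{k_j}))\|^2 + \alpha\lambda\|\bz^{k_j}\|_q^q \leq \tfrac{1}{2}\|\by-(\bx^{k_j}-\alpha\nabla f(\bx^{k_j}))\|^2 + \alpha\lambda\|\by\|_q^q.
\end{equation*}
I would pass $j\to\infty$ using continuity of $\nabla f$ (guaranteed by Assumption \ref{assumptions-f} on the bounded level set $\Omega^0$ that contains the whole iterate sequence) to handle the quadratic terms, continuity of $\|\cdot\|_q^q$ for $q\in(0,1)$ for the regularizer on the right, and lower semicontinuity of $\|\cdot\|_q^q$ on the left. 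For $q=0$, continuity fails but we still have $\liminf_j\|\bz^{k_j}\|_0 \geq \|\bx^*\|_0$ because any coordinate $i$ with $x_i^*\neq 0$ forces $z_i^{k_j}\neq 0$ for large $j$, which is exactly what is needed on the left-hand side. The resulting limit inequality is
\begin{equation*}
\tfrac{1}{2}\|\bx^*-(\bx^*-\alpha\nabla f(\bx^*))\|^2 + \alpha\lambda\|\bx^*\|_q^q \leq \tfrac{1}{2}\|\by-(\bx^*-\alpha\nabla f(\bx^*))\|^2 + \alpha\lambda\|\by\|_q^q,
\end{equation*}
valid for every $\by$. By Definition \ref{p-stationary-point}, this is exactly $\bx^*\in\prox(\bx^*-\alpha\nabla f(\bx^*))$, proving that $\bx^*$ is a P-stationary point with the stated $\alpha$.

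The main obstacle I anticipate is the $q=0$ case, where the non-continuity of $\|\cdot\|_0$ could a priori obstruct passage to the limit. The key observation that saves the argument is the support-inclusion property for nearby points: if $z_i^{k_j}\to x_i^*\neq 0$ then $z_i^{k_j}\neq 0$ eventually, so the integer-valued function $\|\cdot\|_0$ at least lower-semicontinuously retains its mass at $\bx^*$. Every other ingredient (boundedness of $\{\bx^k\}\subseteq\Omega^0$, continuity of $\nabla f$ on $\Omega^0$, constancy of $\alpha_k$) is a direct consequence of Assumption \ref{assumptions-f} and the choice \eqref{def-alpha-k}, so no further surgery is needed.
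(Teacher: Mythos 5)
Your proof is correct and follows essentially the same route as the paper: the descent inequality \eqref{descent-lemma-property} plus boundedness of $F$ from below gives convergence of $\{F(\bx^k)\}$ and the vanishing differences, and the P-stationarity of accumulation points follows by passing to the limit in the proximal inclusion \eqref{armijio-descent-property}. The only difference is that the paper handles the last step by citing the outer semicontinuity of proximal mappings (\cite[Theorem 1.25]{RW1998}), whereas you unpack that citation by hand via the lower semicontinuity of $\|\cdot\|_q^q$ (including the $q=0$ case), which is a valid and self-contained substitute.
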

\begin{proof} Both  $f$ and $\|\cdot\|^q_q$ are bounded from below, so is $F$, which by \eqref{descent-lemma-property} yields the convergence of $\{F(\bx^{k})\}$. Taking the limit of both sides of \eqref{descent-lemma-property} leads to $ \bz ^{k}-\bx^k \to0$ and $\bx^{k+1}-\bx^{k}\to 0$. Let $\bx^*$ be any accumulating point of $\{\bx^k\}$.  Algorithm \ref{algorithm 1} implies
\begin{eqnarray}\label{u-l-P}
\bz ^{k}   \in \prox( \bx^k -\alpha  \nabla f(\bx^k) ).\end{eqnarray}
By passing to a  subsequence, taking the limit of both sides of \eqref{u-l-P}  derives 
\begin{eqnarray}\label{x*-P-stationary} \bx^{*} \in {\rm Prox}_{\alpha \lambda\|\cdot\|^q_q}( \bx^* -\alpha  \nabla f(\bx^*) ),\end{eqnarray} 
due to \cite[Theorem 1.25]{RW1998}, namely,   $\bx^*$ is a P-stationary point. 
\end{proof}
Now we are ready to give the sequence convergence as follows. 
\begin{theorem} \label{lemma-accumulating-convergence}  Let  $\{(\bw^k,\bx^{k})\}$ be the sequence generated by  {PSNP} or {PCSNP}. The following results hold under Assumption \ref{assumptions-f}.
\begin{enumerate}
\item[1)] {\bf(Support identification)} There is a finite $k_0$ such that $$\CS^k \equiv \supp(\bx^k)\equiv\CS^*,~~k\geq k_0.$$
\item[2)] {\bf(Sequence convergence)} If second-order sufficient condition  (\ref{2nd-sufficient-cond}) holds at an accumulating point  $\bx^*$, then whole sequence $\{\bx^{k}\}$ converges to this P-stationary point $\bx^*$, which is also a unique local minimizer.
\end{enumerate}
\end{theorem}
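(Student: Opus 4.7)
The plan is to extract the quadratic growth supplied by the second-order sufficient condition, use it together with the monotone descent of $\{F(\bx^k)\}$ to trap the whole sequence near $\bx^*$, and then read off the support identification from Proposition \ref{proximal-phi}'s uniform modulus lower bound. I would carry out part 2 first and then deduce part 1 as a consequence of whole-sequence convergence.

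To begin, Theorem \ref{second-order-necessary-condition}(2) applied under \eqref{2nd-sufficient-cond} yields a radius $\rho_0>0$ and $\eta_*>0$ with the growth property $F(\bx)\geq F(\bx^*)+\eta_*\|\bx-\bx^*\|^2$ on $\mathbb{N}(\bx^*,\rho_0)$, and simultaneously declares $\bx^*$ the unique local minimizer there. From Lemma \ref{sequence-convergence}, $\{F(\bx^k)\}$ converges, $\bz^k-\bx^k\to 0$, $\bx^{k+1}-\bx^k\to 0$, and there is a subsequence $\bx^{k_j}\to\bx^*$. Continuity gives $F(\bx^{k_j})\to F(\bx^*)$, and monotonicity \eqref{descent-lemma-property} then forces $F(\bx^k)\downarrow F(\bx^*)$ along the full sequence.

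For part 2, I would run a trapping induction. Pick $\rho\in(0,\rho_0)$ and choose $k_\star$ so that $\bx^{k_\star}\in\mathbb{N}(\bx^*,\rho/2)$, $\|\bx^{k+1}-\bx^k\|<\rho/4$ for all $k\geq k_\star$, and $(F(\bx^{k_\star})-F(\bx^*))/\eta_*<\rho^2/16$; all three are simultaneously achievable by the previous step. Inductively, if $\bx^k\in\mathbb{N}(\bx^*,\rho/2)\subseteq\mathbb{N}(\bx^*,\rho_0)$, the growth property combined with monotonicity gives $\|\bx^k-\bx^*\|^2\leq (F(\bx^k)-F(\bx^*))/\eta_*\leq (F(\bx^{k_\star})-F(\bx^*))/\eta_*<\rho^2/16$, so $\|\bx^{k+1}-\bx^*\|\leq\|\bx^{k+1}-\bx^k\|+\|\bx^k-\bx^*\|<\rho/2$, preserving the invariant. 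The bound $\|\bx^k-\bx^*\|^2\leq(F(\bx^k)-F(\bx^*))/\eta_*\to 0$ then yields $\bx^k\to\bx^*$; uniqueness as a local minimizer is already contained in Theorem \ref{second-order-necessary-condition}(2). For part 1, Proposition \ref{proximal-phi} ensures that any non-zero component of an element of $\mathrm{Prox}_{\alpha\lambda|\cdot|^q}(\cdot)$ has modulus at least $c(\alpha\lambda,q)>0$; applied to \eqref{u-l-P} and to P-stationarity of $\bx^*$, both $\bz^k$ and $\bx^*$ respect this gap. Since $\bz^k\to\bx^*$, for $k$ large the inequality $|x^*_i|\geq c(\alpha\lambda,q)$ forces $|z^k_i|\geq c(\alpha\lambda,q)/2>0$ when $i\in\CS^*$, whereas $|z^k_i|<c(\alpha\lambda,q)$ together with the uniform gap forces $z^k_i=0$ when $i\notin\CS^*$; hence $\CS^k=\CS^*$ for all $k\geq k_0$. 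The Newton step has $\bd^k_{\overline{\CS}^k}=0$, so $\bx^{k+1}_{\overline{\CS}^k}=\bz^k_{\overline{\CS}^k}=0$ and $\supp(\bx^{k+1})\subseteq\CS^*$, while $\bx^k\to\bx^*$ and the lower bound on $|x^*_i|$ for $i\in\CS^*$ provide the reverse inclusion.

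The main obstacle is the trapping step. A priori even one Newton correction could eject an iterate from $\mathbb{N}(\bx^*,\rho_0)$, and neither descent nor growth alone suffices. The quantitative interplay does the job: growth converts the function-value gap into a distance bound, monotone descent keeps that gap below the level attained at entry, and the step sizes $\bx^{k+1}-\bx^k$ vanish; once a single iterate sits well inside $\mathbb{N}(\bx^*,\rho/2)$ with a small enough residual gap, the invariant is self-reinforcing and no subsequent iterate can escape.
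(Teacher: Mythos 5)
Your part 2) is correct in substance and takes a genuinely different route from the paper. The paper notes that, under \eqref{2nd-sufficient-cond}, the accumulation point $\bx^*$ is a unique local minimizer, hence isolated, and then invokes \cite[Lemma 4.10]{more1983computing}: an isolated accumulation point of a sequence with $\bx^{k+1}-\bx^k\to0$ attracts the whole sequence. Your trapping induction replaces that citation with a self-contained quantitative argument in which the quadratic growth \eqref{2nd-sufficient-cond-growth} converts the monotonically shrinking gap $F(\bx^k)-F(\bx^*)$ into a distance bound; combined with vanishing steps this keeps the iterates inside the growth neighbourhood and drives them to $\bx^*$. Both are sound; yours is more elementary and explicit, the paper's is shorter. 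One wrinkle in your version: for $q=0$ the map $F=f+\lambda\|\cdot\|_0$ is only lower semicontinuous, so ``continuity gives $F(\bx^{k_j})\to F(\bx^*)$'' is not literally valid. It is repairable with the same modulus gap you use later --- since $\bz^{k_j}\to\bx^*$ along the subsequence and every nonzero entry of $\bz^{k_j}$ has modulus at least $c(\alpha\lambda,q)$, one gets $\|\bz^{k_j}\|_0=\|\bx^*\|_0$ eventually, hence $F(\bz^{k_j})\to F(\bx^*)$ and then $F(\bx^k)\downarrow F(\bx^*)$ by monotonicity --- but the repair is needed.

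Part 1) as you prove it has a genuine gap relative to the statement. The theorem asserts support identification under Assumption \ref{assumptions-f} alone; condition \eqref{2nd-sufficient-cond} enters only in part 2). Your derivation of part 1) uses $\bz^k\to\bx^*$ for the \emph{whole} sequence, which you obtain only as a by-product of part 2), i.e. only under the second-order sufficient condition. Without it the sequence may have several accumulation points and your argument does not launch. The paper's proof avoids this by working with consecutive iterates: $\bz^{k+1}-\bz^k\to0$ (from Lemma \ref{sequence-convergence}) together with the uniform gap $c(\alpha\lambda,q)$ of Proposition \ref{proximal-phi} already forces $\CS^{k}\equiv\CS^{k+1}$ for all large $k$, because an index cannot enter or leave $\supp(\bz^k)$ without a coordinate jump of size at least $c(\alpha\lambda,q)$; a convergent \emph{subsequence} $\bz^k\to\bx^*$ then identifies the eventually constant support with $\CS^*$; finally $\bz^k-\bx^k\to0$ and $\supp(\bx^{k+1})\subseteq\CS^k$ give $\supp(\bx^k)\equiv\CS^k$. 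Your gap-based reasoning is exactly the right tool, but it must be applied to successive iterates rather than to the limit to obtain the unconditional version of part 1).
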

\begin{proof} 1)   Conditions \eqref{u-l-P} and \eqref{x*-P-stationary} and Proposition \ref{proximal-phi} lead to
 \begin{eqnarray}\label{lower-bd-w-x*}
 \eqspace{1}
 \begin{array}{lll}
 |w_i^{k}| \geq c >0,~~\forall i\in\CS^k,\qquad
  |x_i^{*}|\geq c >0,~~\forall i\in\CS^*,
   \end{array}
\end{eqnarray} 
 for any $k\geq 0$, where $c:=c(\alpha\lambda,q)$ is defined in \eqref{prox-a-ct}. We prove the conclusion by showing three facts for any $k\geq k_0$.
 \begin{itemize} 
 \item $\CS^{k} \equiv\CS^{k+1}$. In fact, by Lemma \ref{sequence-convergence}  $\bz ^{k+1}-\bz ^{k} = (\bz ^{k+1}-\bx^{k+1}) + (\bx^{k+1} - \bx^k)+(\bx^k - \bz ^{k})\to 0.$
% which means there exists a finite $k_0>0$ such that $\|\bz ^{k+1}-\bz ^{k}\|< c$ and $\|\bz ^{k}-\bx^{k}\|< c$  for any $k\geq k_0$.   
 Suppose $\CS^k\neq \CS^{k+1}$, then there is an $i$ such that $i\in \CS^k$ and $i\notin\CS^{k+1}$ (or $i\notin \CS^k$ and $i\in\CS^{k+1}$). This leads to $c\leq w_{i}^{k}\leq \|\bz ^{k+1}-\bz ^{k}\|\to 0$, a contradiction.
 \item $\CS^{k} \equiv\CS^*$. In fact, since $\bz ^{k}-\bx^{k}\to 0$ and $\bx^*$ is an accumulating point of $\{\bx^k\}$,  there is a subsequence $\{\bz ^{k}: k\in K\}$ converging to $\bx^*$. Moreover,  we have shown that  $\CS^{k} \equiv\CS^{k+1}$ for any sufficiently large $k$, which by the similar reasoning to the above, $\bz ^{k} \to \bx^*$ as $k(\in K )\to \infty$, and \eqref{lower-bd-w-x*} enables us to prove $\CS^{k} \equiv\CS^*$.
 \item $\CS^{k} \equiv\supp(\bx^k)$. In fact, $\supp(\bx^{k+1})\subseteq\CS^k$ from  Algorithm \ref{algorithm 1}. If there is $i\in \CS^k$ but $i\notin\supp(\bx^{k+1})$, then $0<c\leq w_{i}^{k}\leq \|\bz ^{k}-\bx^{k}\| \to 0$,  a contradiction. 
 \end{itemize}
2) Lemma \ref{sequence-convergence} states that any accumulating point $\bx^*$ is a P-stationary point. Then based on Theorem \ref{second-order-necessary-condition}, it is a unique local minimizer of $\min_{\bx} F(\bx)$ under condition  (\ref{2nd-sufficient-cond}). Hence, it is isolated from \cite[Definition 2.1]{auslender1984stability},  
 which by \cite[Lemma 4.10]{more1983computing} and  $\bx^{k +1}-\bx^ {k} \rightarrow0$ can conclude  whole sequence $\{\bx^ {k}\}$ converges to  $\bx^*$.
 \end{proof}
\begin{remark} In our derivation of sequence convergence in Theorem \ref{lemma-accumulating-convergence}, we leverage the second-order sufficient condition. This condition can be regarded as the locally strong convexity but confined to a subset $\{\bx: \supp(\bx)=\CS^*\}$, which differs from those used in existing work for sparse optimization. For example, in the context of the sparsity-constrained optimization \cite{Beck13,zhou2021global},  achieving the sequence convergence usually assumes some other conditions such as the isolation  \cite{zhou2021global}, (restricted) strong convexity \cite{Beck13,bahmani2013greedy, yuan2017gradient}, or K$\L{}$ property  \cite{bolte2014proximal}. For (\ref{rso}) with $q\in(0,1)$,  in \cite{wu2022globally}, the sequence generated by the regularized Newton algorithm converges to a stationary point under K$\L{}$ property, an assumption relied on the sequence itself, and some other conditions, as shown in Table \ref{tab:com-algs}. 
\end{remark}

\subsection{Local convergence rate}\label{local-quad-rate}
In this section, we aim to show how fast {PSNP} and {PCSNP} converge to its obtained P-stationary point $\bx^*$. Using its support set, we define
\begin{eqnarray}\label{sub-space} 
\begin{array}{lll} 
\S :=& \{\bx\in\R^n: \supp(\bx) = \CS^*\}.
 \end{array}
\end{eqnarray} 
To achieve the convergence rate, we need two conditions as follows.
\begin{assumption}\label{assumptions-f-sc} $f$ is $SC^1$ on ${\mathbb N}( 0,c_0)$. Moreover, $f$ is coercive if $q=0$.
\end{assumption}
%\begin{assumption}\label{assumptions-f-ssc} $f$ is $SC^2$ on ${\mathbb N}( 0,c_0)$. Moreover, $f$ is coercive if $q=0$.
%\end{assumption}
\begin{assumption}\label{assumptions-soc} Second-order sufficient condition  (\ref{2nd-sufficient-cond}) holds at  $\bx^*$.
\end{assumption}
Assumption \ref{assumptions-f-sc}  is stronger than Assumption \ref{assumptions-f} due to \eqref{Lip-Smooth-continuous}. According to \eqref{descent-lemma-property}, we must have $F(\bx^*)\leq F(\bx^k)\leq F(\bx^0)$ and thus $\bx^*\in\Omega^0$. This indicates that $\bx^*\in {\mathbb N}( 0,c_0)$ from \eqref{def-consts}. Consequently, Assumption \ref{assumptions-f-sc} implies that $f$ is $SC^1$ at $\bx^*$. Moreover, under Assumption \ref{assumptions-soc}, 
\begin{eqnarray}\label{2nd-sufficient-cond-0}
\qquad\begin{array}{l}
\sigma_* :=  \frac{1}{2}  \inf_{\M^*\in\partial^2_{\CS^*} E(\bx^*)}\inf_{\bv\in \mathbb{B}}   \langle \M^*  \bv, \bv \rangle >0,
\end{array}
\end{eqnarray}
where $\mathbb{B}:=\{\bv\in\R^{|\CS^*|}: \|\bv\|=1\}.$ This fact enables the following result.
\begin{lemma} Suppose Assumptions \ref{assumptions-f-sc} and \ref{assumptions-soc} hold. Then there exists a neighbourhood $\N $ of $\bx^*$ such that 
\begin{eqnarray}\label{2nd-sufficient-cond-new}
\begin{array}{l}
  \inf_{\M\in\partial^2_{\CS^*} E( \bx)} \inf_{\bv\in \mathbb{B}} \langle \M  \bv, \bv \rangle \geq   \sigma_*,~~~\forall~\bx\in  \S \cap\N.
\end{array} 
\end{eqnarray} 
\end{lemma}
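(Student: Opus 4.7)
The plan is to argue by contradiction, using the outer semicontinuity and local boundedness of the generalized Hessian $\partial^2 E(\cdot;\CS^*)$ near $\bx^*$ to pass a suitable limit. Recall that Assumption \ref{assumptions-soc} combined with \eqref{2nd-sufficient-cond-0} means that at $\bx^*$ itself we already have
\begin{equation*}
\inf_{\M^*\in\partial^2_{\CS^*} E(\bx^*)}\inf_{\bv\in\mathbb{B}}\langle\M^*\bv,\bv\rangle \;\geq\; 2\sigma_*,
\end{equation*}
so the content of \eqref{2nd-sufficient-cond-new} is a stability statement: this quadratic form does not drop below $\sigma_*$ when we perturb $\bx^*$ inside $\S$.

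First I would set up the contradiction. Suppose no such $\N$ exists. Then there is a sequence $\bx^k\to\bx^*$ with $\bx^k\in\S$ (so $\supp(\bx^k)=\CS^*$), together with $\M^k\in\partial^2_{\CS^*} E(\bx^k)$ and $\bv^k\in\mathbb{B}$ satisfying $\langle\M^k\bv^k,\bv^k\rangle<\sigma_*$. Because $\bx^*\neq 0$ is a P-stationary point and $\CS^*=\supp(\bx^*)$, Assumption \ref{assumptions-f-sc} gives that $f$ is $LC^1$ at $\bx^*$, and Proposition \ref{pro-bd} then provides a neighbourhood of $\bx^*$ on which $\partial^2 E(\cdot;\CS^*)$ is uniformly bounded in norm by some constant $c<\infty$; in particular $\|\M^k\|\leq c$ for all large $k$. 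The unit sphere $\mathbb{B}$ is compact, so after passing to a subsequence, $\M^k\to\M^*$ and $\bv^k\to\bv^*$ with $\|\bv^*\|=1$.

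The key step is to confirm that the limit matrix $\M^*$ lies in $\partial^2_{\CS^*} E(\bx^*)$. Since $\bx^k\in\S$ and $\bx^k\to\bx^*$, and $\nabla E(\cdot;\CS^*)$ is locally Lipschitz near $\bx^*$ (again by Proposition \ref{pro-bd}), the outer semicontinuity of the generalized Hessian stated in \cite[Sect.~2 ($P_2$)]{hiriart1984generalized} yields $\M^*\in\partial^2 E(\bx^*;\CS^*)$, and extracting the principal submatrix indexed by $\CS^*$ gives $\M^*\in\partial^2_{\CS^*} E(\bx^*)$. Passing to the limit in $\langle\M^k\bv^k,\bv^k\rangle<\sigma_*$ then produces
\begin{equation*}
\langle\M^*\bv^*,\bv^*\rangle \;\leq\; \sigma_* \;<\; 2\sigma_* \;\leq\; \inf_{\widetilde{\M}\in\partial^2_{\CS^*} E(\bx^*)}\inf_{\bv\in\mathbb{B}}\langle\widetilde{\M}\bv,\bv\rangle,
\end{equation*}
which contradicts \eqref{2nd-sufficient-cond-0} applied to $\M^*$ and $\bv^*\in\mathbb{B}$, completing the argument.

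The main obstacle I anticipate is cleanly invoking the closed-graph / outer-semicontinuity property of the generalized Hessian along the restricted sequence $\{\bx^k\}\subset\S$; here restriction to $\S$ is essential because $\partial^2_{\CS^*}E(\cdot)$ was only defined via Proposition \ref{pro-bd} on a neighbourhood where $\CS^*\subseteq\supp(\cdot)$, which is automatic on $\S$. Everything else reduces to uniform boundedness and a standard compactness argument on the unit sphere.
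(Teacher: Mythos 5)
Your proposal is correct and follows essentially the same route as the paper: a contradiction argument using the local boundedness of $\partial^2 E(\cdot;\CS^*)$ from Proposition \ref{pro-bd}, a convergent subsequence $\M^k\to\M^*$, the outer semicontinuity property $(P_2)$ of \cite{hiriart1984generalized} to place $\M^*$ in $\partial^2_{\CS^*}E(\bx^*)$, and the resulting clash with \eqref{2nd-sufficient-cond-0}. The only cosmetic difference is that you also extract a convergent subsequence $\bv^k\to\bv^*$ on the compact sphere, whereas the paper passes the infimum over $\mathbb{B}$ to the limit directly; both are valid.
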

\begin{proof} Suppose this is untrue. Then there is a sequence $\{\bfz^k\}(\subseteq\S \cap\N)\to\bx^*$  and $\M^k\in\partial^2 _{\CS^*} E(\bfz^k)$ such that $\inf_{\bv\in\mathbb{B}}\langle \M^k  \bv, \bv \rangle <  \sigma_*$. Under Assumption \ref{assumptions-f-sc}, $f$ is LC$^1$ at $\bx^*$, which by Proposition \ref{pro-bd} indicates $\{\M^k\}$ is bounded and has a convergent subsequence. By passing to the subsequence, we may assume $\M^k\to\M^*$.  From  \cite[Sect. 2 ($P_2$)]{hiriart1984generalized}, we obtain $\M^* \in \partial^2_{\CS^*} E( \bx^{*})$, which implies $\inf_{\bv\in\mathbb{B}}\langle \M^*  \bv, \bv \rangle <  \sigma_*$ and thus
\begin{eqnarray*} 
\begin{array}{lll}
 2\sigma_* = \inf_{\M^*\in\partial^2_{\CS^*} E(\bx^*)}\inf_{\bv\in \mathbb{B}}   \langle \M^*  \bv, \bv \rangle  \leq    \inf_{\bv\in \mathbb{B}} \langle \M^*  \bv, \bv \rangle <  \sigma_*.
\end{array} 
\end{eqnarray*} 
This contradiction immediately shows the conclusion.
\end{proof}
\begin{lemma} Let  $\{(\bw^k,\bx^{k})\}$ be the sequence generated by  {PSNP} or {PCSNP}. Then    
\begin{eqnarray} 
 \eqspace{1}\label{convergence-rate-fact-2}
 \begin{array}{lll}
 \|\bx^k- \bx^* \|\geq \rho_*\| \bz ^k - \bx^* \|. 
 \end{array}
\end{eqnarray}
for sufficiently large $k$ under Assumptions \ref{assumptions-f-sc} and \ref{assumptions-soc}, where $\rho_*>0$ relied on $\bx^*$. 
\end{lemma}
\begin{proof}  We emphasize that all the following statements are made for sufficiently large $k$.  First of all, by Theorem \ref{lemma-accumulating-convergence}  2), we have $\CS^k \equiv \CS^*$.   As $\bz ^k$ satisfies  \eqref{armijio-descent-property} and $\bx^*$ is a P-stationary point with $\alpha$, from Lemma \ref{lemma-grad-gamma-0}, we have
 \begin{eqnarray*} 
 0 &=& \bz ^k_{\CS^k}-(\bx^{k}_{\CS^k}-\alpha  \nabla_{\CS^k} f(\bx^{k}) ) +\alpha  \lambda\bg^k_{\CS^k} ,\\
 0&=& \nabla_{\CS^*} E( \bx^*) = \nabla_{\CS^k} E( \bx^*) =\nabla_{\CS^k} f(\bx^*) + \alpha  \lambda \bg^*_{\CS^k},  
\end{eqnarray*}  
where $\bg^k\in\partial \|\bz^k\|^q_q$ and $\bg^*\in\partial \|\bx^*\|^q_q$,  which results in
 \begin{eqnarray} \label{convergence-rate-fact-1}
{\bf lhs}^k&:=&\bx^k_{\CS^k}- \bx^*_{\CS^k}   -  \alpha  (\nabla_{\CS^k} f (\bx^{k}) - \nabla_{\CS^k} f (\bx^{*}))\\ \nonumber
&= & \bz ^k_{\CS^k} +\alpha  \lambda \bg^k_{\CS^k} - \bx^*_{\CS^k} - \alpha  \lambda\bg^*_{\CS^k}=:{\bf rhs}^k.
\end{eqnarray}
Let $\H^*\in \partial^2_{\CS^*} f(\bx^*)$ and  $\H^k\in \partial^2_{\CS^*} f(\bfz^k)$ where $\bfz^k\in\co\{\bx^k,\bx^*\}$. % By  \eqref{2nd-sufficient-cond} that $\M^*\succ 0$,   there is $\H^*  \succ - \lambda \nabla_{\CS^*}^2\|\bx_{\CS^*}\|_q^q\succ 0$. Again due to the upper-semicontinuity of $\partial^2 f$  (see \cite[Sect. 2 ($P_2$)]{hiriart1984generalized}),  we have $\H^k\succ0$ for any $\H^k\in \partial^2_{\CS^*} f(\bfz^k)$ where $\bfz^k\in\co\{\bx^k,\bx^*\}$. In addition, 
From Proposition \ref{pro-bd} there is $c_*>0$ such that $\|\H^k\|\leq c_*$, leading to $\|{\bf I}- \alpha \H^k\|\leq 1+\alpha c_*\leq 1+\tau c_*$ from \eqref{def-alpha-k}. Now \cite[Theorem 2.3]{hiriart1984generalized} enables us to obtain
\begin{eqnarray*} 
 \eqspace{1} 
 \begin{array}{lll}
 \|{\bf lhs}^k \| = \| ( {\bf I}- \alpha \H^k)  \cdot (\bx^k_{\CS^k}- \bx^*_{\CS^k} ) \|\leq      (1+\tau c_*) \|\bx^k- \bx^* \|.
 \end{array}
\end{eqnarray*}
Let $\bw^k_{\pi}:=\pi \bw^k +(1-\pi)\bx^*$ for some $\pi\in(0,1)$. It follows from \eqref{lower-bd-w-x*} that $|(\bw_{\pi})_i^k|\geq c$ for any $i\in \CS^k$, leading to $1- \alpha\lambda q (1-q) |(\bw_{\pi})_i^k|^{q-2} \geq 1- q/2\geq 1/2$, which by  the Mean Value Theorem derives
  \begin{eqnarray*}
 \eqspace{1}
 \begin{array}{lll}
\|{\bf rhs}^k \| 
=  \|  \int_0^1 ({\bf I}+\alpha  \lambda \nabla^2_{\CS^k} \|\bw^k_{\pi}\|_q^q ) \cdot (\bz ^k_{\CS^k} - \bx^*_{\CS^k}) d\pi\|
\geq (1/2) \|\bz ^k - \bx^*\|.
 \end{array}
\end{eqnarray*}
which by the bound for $\|{\bf lhs}^k \|$  and \eqref{convergence-rate-fact-1} yields \eqref{convergence-rate-fact-2} with $\rho_*=1/(2+2\tau c_*)$.
\end{proof}
\begin{lemma}\label{newton-step-admit} Let  $\{(\bw^k,\bx^{k})\}$ be the sequence generated by  {PSNP} or {PCSNP}. If Assumptions \ref{assumptions-f-sc} and \ref{assumptions-soc} hold, then the Newton step is always admitted eventually with $\beta_k\equiv 1$ by choosing $\sigma\in(0,\sigma_*)$.
\end{lemma}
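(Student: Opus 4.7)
The plan is to show that, once $k$ is large enough, the Newton system (\ref{Newton-descent-property}) is uniquely solvable and the unit step satisfies the Armijo test (\ref{Newton-descent-property-1}) for every $\sigma\in(0,\sigma_*)$, forcing $\beta_k\equiv 1$. The argument proceeds in three stages: uniform conditioning of the Newton matrix, a superlinear one-step error bound on the Newton iterate, and a Taylor-type comparison of $F(\bz^k-\bd^k)$ and $F(\bz^k)$ via the generalized Mean Value Theorem.

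First, Assumption \ref{assumptions-f-sc} implies Assumption \ref{assumptions-f}, so Theorem \ref{lemma-accumulating-convergence} yields $\CS^k\equiv\CS^*$ and $\bz^k\to\bx^*$ for all large $k$. Proposition \ref{pro-bd} supplies a uniform upper bound $c_*$ on $\|\M\|$ for every $\M\in\partial^2_{\CS^*}E(\bw)$ with $\bw$ in a neighborhood of $\bx^*$, and (\ref{2nd-sufficient-cond-new}) delivers the matching lower estimate $\langle\M^k\bv,\bv\rangle\ge\sigma_*\|\bv\|^2$ for every $\M^k\in\partial^2_{\CS^k}E(\bz^k)$; in particular $(\M^k)^{-1}$ exists with $\|(\M^k)^{-1}\|\le 1/\sigma_*$. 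Because $\bz^k$ and $\bx^*$ share the support $\CS^*$ for large $k$, the increment $\bd:=\bz^k-\bx^*$ satisfies $\bd_{\overline{\CS}^*}=0$, so Proposition \ref{pro-bd-expansion} combined with $\nabla_{\CS^*}E(\bx^*)=0$ (Lemma \ref{lemma-grad-gamma-0}) gives
$$\nabla_{\CS^*}E(\bz^k) \;=\; \M^k(\bz^k-\bx^*)_{\CS^*} + o(\|\bz^k-\bx^*\|).$$
Substituting the Newton equation $\M^k\bd^k_{\CS^*}=\nabla_{\CS^*}E(\bz^k)$ and invoking $\|(\M^k)^{-1}\|\le 1/\sigma_*$ yields $\bd^k=(\bz^k-\bx^*)+o(\|\bz^k-\bx^*\|)$, so that
$$\bz^k-\bd^k-\bx^* \;=\; o(\|\bz^k-\bx^*\|), \qquad \|\bd^k\| \;=\; \|\bz^k-\bx^*\|\,(1+o(1)).$$

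Second, because $\supp(\bz^k-\bd^k)=\CS^*$ for large $k$ (the nonzero entries of $\bx^*$ are bounded below by $c(\alpha\lambda,q)>0$ via Proposition \ref{proximal-phi}, and both $\bz^k$ and $\bz^k-\bd^k$ lie arbitrarily close to $\bx^*$), $F$ coincides with $E(\cdot;\CS^*)$ at these iterates. Applying (\ref{MVT}) to $E(\cdot;\CS^*)$ on the segments $[\bx^*,\bz^k]$ and $[\bx^*,\bz^k-\bd^k]$, using $\nabla_{\CS^*}E(\bx^*)=0$, the vanishing of the increments outside $\CS^*$, and the fact that the intermediate points stay in $\S\cap\N$, produces
\begin{eqnarray*}
F(\bz^k)-F(\bx^*) &\ge& \tfrac{\sigma_*}{2}\|\bz^k-\bx^*\|^2,\\
F(\bz^k-\bd^k)-F(\bx^*) &\le& \tfrac{c_*}{2}\|\bz^k-\bd^k-\bx^*\|^2 \;=\; o(\|\bz^k-\bx^*\|^2),
\end{eqnarray*}
where the first inequality uses (\ref{2nd-sufficient-cond-new}) and the second uses the uniform bound $c_*$ from Proposition \ref{pro-bd}. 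Subtracting and substituting $\|\bz^k-\bx^*\|^2=\|\bd^k\|^2(1+o(1))$ from the previous step gives $F(\bz^k-\bd^k)-F(\bz^k)\le -\tfrac{\sigma_*}{2}\|\bd^k\|^2+o(\|\bd^k\|^2)$, which is at most $-\tfrac{\sigma}{2}\|\bd^k\|^2$ for every fixed $\sigma\in(0,\sigma_*)$ and $k$ large enough. This is exactly (\ref{Newton-descent-property-1}) with $s_k=0$, so $\beta_k\equiv 1$.

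The most delicate point will be the semismooth error bound: the generalized Hessian $\M^k$ used in (\ref{Newton-descent-property}) sits at $\bz^k$ rather than at the limit $\bx^*$, yet the SC$^1$ property embedded in Proposition \ref{pro-bd-expansion} is tailored precisely to bridge this gap with an $o(\|\bd\|)$ remainder. A secondary technical issue is ensuring that the intermediate points appearing in the mean value expansions actually possess the support $\CS^*$ (so that (\ref{2nd-sufficient-cond-new}) and the identification $F=E(\cdot;\CS^*)$ apply), which is handled by the uniform lower bound $c(\alpha\lambda,q)>0$ on $|x^*_i|$ for $i\in\CS^*$ together with continuity.
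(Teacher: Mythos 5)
Your proof is correct, but it takes a genuinely different route from the paper's. The paper works entirely at $\bz^k$: it expands $2E(\bz^k-\bd^k;\CS^k)$ to second order via the directional derivative, substitutes the Newton equation $\M^k\bd^k_{\CS^k}=\nabla_{\CS^k}E(\bz^k)$ so that the linear and quadratic terms collapse into $-\langle\M^k\bd^k_{\CS^k},\bd^k_{\CS^k}\rangle$ plus the semismooth remainder $\langle\M^k\bd^k_{\CS^k}-(\nabla E)'_{\CS^k}(\bz^k;\bd^k),\bd^k_{\CS^k}\rangle=o(\|\bd^k\|^2)$ from Proposition \ref{pro-bd-expansion}, and then applies \eqref{2nd-sufficient-cond-new} at $\bz^k$; the only asymptotic input is $\bd^k\to0$, which follows from $\nabla_{\CS^k}E(\bz^k)\to0$. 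You instead first establish the one-step Newton error bound $\bz^k-\bd^k-\bx^*=o(\|\bz^k-\bx^*\|)$ --- which is essentially the engine of Theorem \ref{quadratic-theorem}, proved in the paper only \emph{after} this lemma --- and then compare $F(\bz^k-\bd^k)$ with $F(\bz^k)$ through their gaps to $F(\bx^*)$, bounding one from below by quadratic growth via \eqref{2nd-sufficient-cond-new} and the other from above via the uniform constant $c_*$ of Proposition \ref{pro-bd}. Your route buys the superlinear rate as a near-immediate corollary, at the cost of the extra bookkeeping you correctly flag: the intermediate points of the two mean-value expansions must lie in $\S\cap\N$ so that \eqref{2nd-sufficient-cond-new} and the identification $F=E(\cdot;\CS^*)$ apply, which your appeal to the lower bound $c(\alpha\lambda,q)$ on $|x_i^*|$, $i\in\CS^*$, together with $\bz^k\to\bx^*$ and $\bz^k-\bd^k\to\bx^*$, does handle. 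One remark: your key estimate $\nabla_{\CS^*}E(\bz^k)=\M^k(\bz^k-\bx^*)_{\CS^*}+o(\|\bz^k-\bx^*\|)$ uses the generalized Hessian at the perturbed point $\bz^k$ while anchoring the increment at $\bx^*$; Proposition \ref{pro-bd-expansion} is phrased with $\M\in\partial^2_{\CS}E(\bx)$ at the base point, but the version you need is the standard semismooth-Newton estimate and is exactly how the paper itself invokes that proposition in \eqref{linear-rate-fact-30}, so this is legitimate.
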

\begin{proof}
 By Theorem \ref{lemma-accumulating-convergence},  there exists a neighbourhood $\N $ of $\bx^*$ such that
 \begin{eqnarray}\label{xk-wx-Gamma}
  	 \eqspace{1} 
     \begin{array}{lll} 
\bz ^k\in \S\cap\N,~~ \bx^k\in \S\cap\N,~~\CS^k \equiv \supp(\bx^k)\equiv\CS^*.
	 \end{array}
\end{eqnarray} 
As (\ref{2nd-sufficient-cond}) holds at $\bx^*$ and $\bz ^k\in \N $,  \eqref{2nd-sufficient-cond-new} is satisfied. So equations \eqref{Newton-descent-property} are solvable.  

 We then show $\bd^k$ and $\beta_k=1$ satisfy \eqref{Newton-descent-property-1}. As $\bx^*$ is a P-stationary point,   $\nabla_{\CS^*} E( \bx^*)=\nabla_{\CS^*} F(\bx^*)=0$ by Lemma \ref{lemma-grad-gamma-0} and thus  $\nabla_{\CS^k} E( \bz ^{k})=\nabla_{\CS^*} E( \bz ^{k})\to \nabla_{\CS^*} E( \bx^*) =0$ due to  $\bz ^k\to \bx^*$, which together with \eqref{Newton-descent-property} implies  $\bd^k\to 0$ and $\bd^k_{\overline{\CS}^k}=0$. This fact and Proposition \ref{pro-bd-expansion} bring out that for any $\M^k\in \partial^2_{\CS^*} E( \bw^{k})$,  
 \begin{eqnarray}\label{wk-wtk}
  	 \eqspace{1} 
     \begin{array}{lll} 
\M^k\bd^k_{\CS^k}-(\nabla E)'_{\CS^k}(\bz ^{k};\bd^k) = o(\|\bd^k\|).
	 \end{array}
\end{eqnarray} 
 Direct calculation enables us to derive the following chain of inequalities,
 \begin{eqnarray*}
	&& 2F(\bz ^{k}-\bd^k)= 2E( \bz ^{k}-\bd^k;\CS^k) \\
	&=& 2E( \bz ^{k};\CS^k) - 2\langle\nabla  E( \bz ^{k}),  \bd^k \rangle +    E'' (\bz ^{k};\bd^k)+o(\|\bd^k\|^{2})\\
	&=& 2E( \bz ^{k};\CS^k) - 2\langle\nabla_{\CS^k} E( \bz ^{k}),  \bd^k_{\CS^k}\rangle +  \langle (\nabla E)'_{\CS^k}(\bz ^{k};\bd^k),  \bd^k_{\CS^k}\rangle+o(\|\bd^k\|^{2})\\
	&=& 2E( \bz ^{k};\CS^k) - 2\langle\M^k\bd^k_{\CS^k},  \bd^k_{\CS^k}\rangle +  \langle (\nabla E)'_{\CS^k}(\bz ^{k};\bd^k),  \bd^k_{\CS^k}\rangle+o(\|\bd^k\|^{2})\\
	  &=& 2E( \bz ^{k};\CS^k) -  \langle\M^k\bd^k_{\CS^k},  \bd^k_{\CS^k}\rangle +  \langle  \M^k\bd^k_{\CS^k}-(\nabla E)'_{\CS^k}(\bz ^{k};\bd^k),  \bd^k_{\CS^k}\rangle+o(\|\bd^k\|^{2})\\
	  	  &\leq & 2E( \bz ^{k};\CS^k) -  {\sigma_*} \langle   \bd^k_{\CS^k},  \bd^k_{\CS^k}\rangle + o(\|\bd^k\|^{2})\\ 
	 & \leq& 2F(\bz ^{k})- {\sigma} \|\bd^k\|^2, 
\end{eqnarray*} 
where the third and fourth equations hold  due to the \eqref{cdd} and \eqref{Newton-descent-property},  the two inequalities are from \eqref{2nd-sufficient-cond-new} and \eqref{wk-wtk}, and   $\sigma\in(0,\sigma_*).$ The above condition leads to \eqref{Newton-descent-property-1} with $\beta_k=1$. The whole proof is finished.
\end{proof}
\begin{theorem}\label{quadratic-theorem} Let  $\{\bx^{k}\}$ be the sequence generated by  {PSNP} or {PCSNP}. If Assumptions \ref{assumptions-f-sc} and \ref{assumptions-soc} hold and choose $\sigma\in(0,\sigma_*).$  Then the sequence converges superlinearly. If $f$ is $SC^2$ on ${\mathbb N}( 0,c_0)$, then the sequence converges quadratically.
\end{theorem}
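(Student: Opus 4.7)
The plan is to combine the Newton-step admissibility from Lemma \ref{newton-step-admit}, the semismooth/strongly semismooth expansions of Proposition \ref{pro-bd-expansion}, and the ``no-slowdown'' inequality \eqref{convergence-rate-fact-2} to compare $\|\bx^{k+1}-\bx^*\|$ to $\|\bz^k-\bx^*\|$ and then to $\|\bx^k-\bx^*\|$. First I would fix a sufficiently large $k$ so that Theorem \ref{lemma-accumulating-convergence} gives $\CS^k\equiv\CS^*$, Lemma \ref{newton-step-admit} gives $\beta_k\equiv 1$ and hence $\bx^{k+1}=\bz^k-\bd^k$, and \eqref{xk-wx-Gamma} places $\bz^k$ inside the neighbourhood $\N$ where \eqref{2nd-sufficient-cond-new} holds.

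Next, since $\bx^{k+1}_{\overline{\CS}^*}=\bz^k_{\overline{\CS}^*}-\bd^k_{\overline{\CS}^*}=0=\bx^*_{\overline{\CS}^*}$, only the $\CS^*$-component matters, and using the Newton equation \eqref{Newton-descent-property} together with $\nabla_{\CS^*}E(\bx^*)=0$ from Lemma \ref{lemma-grad-gamma-0} I would write
\begin{eqnarray*}
\bx^{k+1}_{\CS^*}-\bx^*_{\CS^*} = (\M^k)^{-1}\bigl[\M^k(\bz^k_{\CS^*}-\bx^*_{\CS^*}) - \nabla_{\CS^*}E(\bz^k) + \nabla_{\CS^*}E(\bx^*)\bigr].
\end{eqnarray*}
The bracket is the key quantity. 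Setting $\bd:=\bx^*-\bz^k$ (which has $\bd_{\overline{\CS}^*}=0$), the first equation of \eqref{Md-Ed} gives $\M^k\bd_{\CS^*}-(\nabla E)'_{\CS^*}(\bz^k;\bd)=o(\|\bd\|)$, while the second gives $\nabla_{\CS^*}E(\bx^*)-\nabla_{\CS^*}E(\bz^k)-(\nabla E)'_{\CS^*}(\bz^k;\bd)=o(\|\bd\|)$. Subtracting the two estimates cancels the directional-derivative term and yields
\begin{eqnarray*}
\bigl\|\M^k(\bz^k_{\CS^*}-\bx^*_{\CS^*}) - \nabla_{\CS^*}E(\bz^k) + \nabla_{\CS^*}E(\bx^*)\bigr\| = o(\|\bz^k-\bx^*\|).
\end{eqnarray*}

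The third step is to control $\|(\M^k)^{-1}\|$. By Proposition \ref{pro-bd} the matrices $\M^k\in\partial^2_{\CS^*}E(\bz^k)$ are uniformly bounded on $\N$, and by the new lower bound \eqref{2nd-sufficient-cond-new} they satisfy $\langle \M^k \bv,\bv\rangle\ge\sigma_*\|\bv\|^2$ for large $k$, so $\|(\M^k)^{-1}\|\le 1/\sigma_*$. Combined with the previous display this gives $\|\bx^{k+1}-\bx^*\|=o(\|\bz^k-\bx^*\|)$, and then applying \eqref{convergence-rate-fact-2} yields $\|\bx^{k+1}-\bx^*\|=o(\|\bx^k-\bx^*\|)/\rho_*$, which is the desired superlinear rate.

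For the quadratic case, the only change is that Assumption \ref{assumptions-f-sc} is strengthened to $SC^2$, so Proposition \ref{pro-bd-expansion} provides the two estimates in \eqref{Md-Ed-2} with remainders $O(\|\bd\|^2)$ instead of $o(\|\bd\|)$; repeating the subtraction argument gives $\|\bx^{k+1}-\bx^*\|=O(\|\bz^k-\bx^*\|^2)\le O(\|\bx^k-\bx^*\|^2)/\rho_*^2$. The main obstacle I anticipate is the bookkeeping in the subtraction step: one has to ensure that the same choice of $\M^k$ (at $\bz^k$) is consistent with the directional derivative $(\nabla E)'_{\CS^*}(\bz^k;\cdot)$ used in both halves of Proposition \ref{pro-bd-expansion}, and that the support identification plus the lower bound \eqref{lower-bd-w-x*} on nonzero entries of $\bz^k$ keep $\M^k$ uniformly positive definite; once these are in place, the rates fall out mechanically.
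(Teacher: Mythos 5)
Your proposal is correct and follows essentially the same route as the paper's proof: the paper likewise combines the Newton equation \eqref{Newton-descent-property} with $\nabla_{\CS^*}E(\bx^*)=0$ to reduce everything to estimating $\M^k(\bz^k_{\CS^k}-\bx^*_{\CS^k})-\nabla_{\CS^k}E(\bz^k)+\nabla_{\CS^k}E(\bx^*)$, bounds this by $o(\|\bz^k-\bx^*\|)$ (resp.\ $O(\|\bz^k-\bx^*\|^2)$) via Proposition \ref{pro-bd-expansion}, invokes the uniform positive definiteness \eqref{2nd-sufficient-cond-new} of $\M^k$, and passes from $\|\bz^k-\bx^*\|$ to $\|\bx^k-\bx^*\|$ through \eqref{convergence-rate-fact-2}. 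The only (cosmetic) difference is that you center the two semismooth expansions at the moving point $\bz^k$ with direction $\bx^*-\bz^k$, whereas the paper centers them at the fixed limit $\bx^*$ with direction $\bz^k-\bx^*$; the latter is marginally cleaner because the $o(\cdot)$/$O(\cdot)$ remainders in Proposition \ref{pro-bd-expansion} are stated for a fixed base point, but the combined estimate you obtain after cancelling the directional-derivative terms is exactly the classical semismooth Newton bound and is valid.
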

\begin{proof} We first have that $\nabla_{\CS^k} E( \bx ^*)=\nabla_{\CS^*} E( \bx ^*)=\nabla_{\CS^*} F( \bx ^*)=0$ by $\CS^k\equiv\CS^*$  and Lemma \ref{lemma-grad-gamma-0}. Since $\beta_k\equiv 1$, it follows $\bx^{k+1}=\bz ^{k} - \bd^k$ and
\begin{eqnarray}\label{linear-rate-fact-30} 
  \M^k(\bx^{k+1}_{\CS^k}-\bx^*_{\CS^k})&=&   \M^k\cdot(\bz ^{k}_{\CS^k} - \bd^{k}_{\CS^k}  -\bx^*_{\CS^k}) \nonumber\\ 
&=&  \M^k\cdot(\bz ^{k}_{\CS^k} -\bx^*_{\CS^k})  - \nabla_{\CS^k} E( \bz ^k)\nonumber\\
&=& \M^k\cdot(\bz ^{k}_{\CS^k} -\bx^*_{\CS^k})  - \nabla_{\CS^k} E( \bz ^k)+\nabla_{\CS^k} E( \bx ^*)\\
&=&  \M^k\cdot(\bz ^{k}_{\CS^k} -\bx^*_{\CS^k})  - (\nabla E)'_{\CS^k} ( \bx ^*; \bw^k-\bx^*)\nonumber\\
&+& (\nabla E)'_{\CS^k} ( \bx ^*; \bw^k-\bx^*)  - \nabla_{\CS^k} E( \bz ^k)+\nabla_{\CS^k} E( \bx ^*)\nonumber\\
&=& o(\|\bw^k-\bx^*\|),\nonumber
\end{eqnarray} 
where the second and last equations are from \eqref{Newton-descent-property}  and  \eqref{Md-Ed}.  By \eqref{2nd-sufficient-cond-new}, there is
\begin{eqnarray} \label{linear-rate-fact-33} 
~~\sigma_*\|\bx^{k+1}-\bx^*\|^2 =\sigma_*\| \bx^{k+1}_{\CS^k}-\bx^*_{\CS^k} \|^2
\leq  \langle \bx^{k+1}_{\CS^k}-\bx^*_{\CS^k},~ \M^k(\bx^{k+1}_{\CS^k}-\bx^*_{\CS^k})\rangle. 
\end{eqnarray} 
which by \eqref{linear-rate-fact-30}  derives that $\|\bx^{k+1}-\bx^*\|=o(\|\bw^k-\bx^*\|)$, thereby delivering
\begin{eqnarray}\label{linear-rate-fact-31} 
\frac{\|\bx^{k+1}-\bx^*\|}{\|\bx^k-\bx^*\| }  &\leq&\frac{\| \bx^{k+1}-\bx^* \|}{\rho_*\|\bw^k-\bx^*\| } =\frac{o(\|\bw^k-\bx^*\|)}{\rho_*\|\bw^k-\bx^*\| } \to 0,  
\end{eqnarray} 
where the first inequality holds due to \eqref{convergence-rate-fact-2}, showing the superlinear convergence rate. If $f$ is SC$^2$,  then from \eqref{Md-Ed-2} and \eqref{linear-rate-fact-30}, we can obtain
\begin{eqnarray*} 
  \M^k(\bx^{k+1}_{\CS^k}-\bx^*_{\CS^k})=O(\|\bw^k-\bx^*\|^2).
\end{eqnarray*} 
Then similar reasoning to show \eqref{linear-rate-fact-31} can prove the quadratic convergence rate.
\end{proof}
%We note from \cite{pang1990newton} that B-differential  $B(\nabla f)$ is said to be Lipschitz at $\bx$  
\subsection{Application to the case of strongly smooth $f$} In this subsection, we shall see that all theorems established in Sections \ref{sec:global-convergence} and \ref{local-quad-rate} are still valid if we replace Assumption \ref{assumptions-f} and \ref{assumptions-f-sc} by the strong smoothness of $f$.
\begin{lemma}\label{lemma-strong-descent} Let  $\{(\bw^k,\bx^{k})\}$ be the sequence generated by  {PSNP} or {PCSNP}. Suppose $f$ is $\sigma_0$-strongly smooth. Then condition (\ref{armijio-descent-property-1}) can be ensured by (\ref{def-alpha-k}) and condition (\ref{descent-lemma-property}) still holds. 
\end{lemma}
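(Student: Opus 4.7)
The plan is to avoid the bounded-level-set induction of Lemma \ref{theo05} and instead exploit the global strong smoothness inequality \eqref{lip-L-def} directly. Specifically, applied at $\bx^k$ in the direction $\bz^k-\bx^k$, strong smoothness gives
\begin{equation*}
f(\bz^k) \le f(\bx^k) + \langle \nabla f(\bx^k), \bz^k-\bx^k\rangle + \frac{\sigma_0}{2}\|\bz^k-\bx^k\|^2.
\end{equation*}
Independently, the proximal characterization \eqref{armijio-descent-property} with $\alpha_k=\alpha$, comparing the value of the proximal objective at $\bz^k$ against its value at $\bx^k$, yields (exactly as in \eqref{descent-lemma-fact-1})
\begin{equation*}
\langle \nabla f(\bx^k), \bz^k-\bx^k\rangle + \lambda\|\bz^k\|^q_q \le -\frac{1}{2\alpha}\|\bz^k-\bx^k\|^2 + \lambda\|\bx^k\|^q_q.
\end{equation*}
Adding these two inequalities and invoking $\alpha \le 1/(\sigma+\sigma_0)$ from \eqref{def-alpha-k}, the coefficient $\frac{1}{2\alpha}-\frac{\sigma_0}{2}$ dominates $\frac{\sigma}{2}$, giving $F(\bz^k) \le F(\bx^k)-\frac{\sigma}{2}\|\bz^k-\bx^k\|^2$. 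This is precisely \eqref{armijio-descent-property-1}.

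Next I would deduce \eqref{descent-lemma-property} by the same dichotomy used at the end of the proof of Lemma \ref{theo05}. If the Newton step is accepted, then \eqref{Newton-descent-property-1} combined with the bound just established gives
\begin{equation*}
F(\bx^{k+1}) \le F(\bx^k) - \frac{\sigma}{2}\|\bz^k-\bx^k\|^2 - \frac{\sigma}{2}\|\bx^{k+1}-\bz^k\|^2,
\end{equation*}
and the inequality $\|\bx^{k+1}-\bx^k\|^2 \le 2\|\bx^{k+1}-\bz^k\|^2+2\|\bz^k-\bx^k\|^2$ converts the right-hand side into the $\max$-form of \eqref{descent-lemma-property} with constant $\sigma/4$. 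If instead $\bx^{k+1}=\bz^k$, then $\|\bz^k-\bx^k\|=\|\bx^{k+1}-\bx^k\|$ and the bound \eqref{armijio-descent-property-1} itself is already \eqref{descent-lemma-property}.

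The principal simplification over Lemma \ref{theo05} is conceptual rather than technical: under only the local-Lipschitz assumption on $\nabla f$, one had to argue by induction that every iterate $\bx^k$ remains inside $\mathbb{N}(0,c_0)$ so that the bound $\|\H\|\le\sigma_0$ could legitimately be invoked through \eqref{MVT}. Global strong smoothness removes that constraint entirely, since \eqref{lip-L-def} holds uniformly on $\R^n$ with the same constant $\sigma_0$. Consequently, no induction, no excursion into $\Omega^0$, and no control of $\|\bz^k\|$ is required, and there is no real obstacle to overcome; the only point worth checking carefully is that the step-size bound \eqref{def-alpha-k} still uses the same $\sigma_0$ as the strong-smoothness modulus, which is true by hypothesis.
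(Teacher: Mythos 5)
Your proposal is correct and follows essentially the same route as the paper's own proof: combine the proximal comparison inequality at $\bz^k$ versus $\bx^k$ with the global strong-smoothness bound \eqref{lip-L-def}, use $\alpha\le 1/(\sigma+\sigma_0)$ from \eqref{def-alpha-k} to absorb the curvature term, and then run the same two-case argument (Newton step accepted or $\bx^{k+1}=\bz^k$) to obtain \eqref{descent-lemma-property}. Your closing observation — that global strong smoothness makes the induction and the confinement to $\Omega^0$ from Lemma \ref{theo05} unnecessary — is exactly the point of this lemma in the paper.
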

\begin{proof}  It follows  from \eqref{armijio-descent-property} and $\alpha_k=\alpha$ that  
\begin{eqnarray*} 
 \eqspace{1}
\begin{array}{lll}
\frac{1}{2}\| \bz ^{k} -( \bx ^k - \alpha   \nabla f(\bx^k))\|^2 + \alpha   \lambda \|\bz ^{k}\|^q_q  
 \leq \frac{1}{2}\|   \alpha    \nabla f(\bx^k) \|^2 + \alpha   \lambda \|\bx^k\|^q_q ,
 \end{array} \end{eqnarray*}
 which results in
   \begin{eqnarray*}  
 \begin{array}{lll}  \langle   \nabla f(\bx^k),  \bz ^{k}-  \bx^k\rangle + \lambda \|\bz ^{k}\|^q_q \leq -  \frac{1}{2\alpha   }\| \bz ^{k}-  \bx^k\|^2 +  \lambda \|\bx^k\|^q_q .
 \end{array}  \end{eqnarray*}
 By the strong smoothness and the above condition, we have
 \begin{eqnarray}\label{descent-lemma-fact-2-00}
F(\bz ^{k})
&\leq&    f(\bx^k)+
  \langle  \nabla f(\bx^k),\bz ^{k}-\bx^k\rangle
+  ({\sigma_0}/{2})\|\bz ^{k}-\bx^k\|^2 +  \lambda\|\bz ^{k}\|^q_q  \nonumber\\ 
& {\leq}& f(\bx^k) + \|\bx^k\|^q_q - \left(  {1}/{(2\alpha)   } -   {\sigma_0}/{2}  \right)\|\bz ^{k}-\bx^k \|^2\nonumber\\
&\leq& F(\bx^k)  -  ({\sigma}/{2}) \|\bz ^{k}-\bx^k \|^2, 
\end{eqnarray} 
This verifies condition (\ref{armijio-descent-property-1}). Now by   Algorithm \ref{algorithm 1}, if equations \eqref{Newton-descent-property} are solvable  and  $\bd^k$ satisfies  \eqref{Newton-descent-property-1}, then $\bx^{k+1}= \bz ^{k}-\beta_k\bd^k$  and thus 
\begin{eqnarray*}
F(\bx^{k+1})- F(\bx^k) &=& F(\bz ^{k}-\beta_k\bd^k) - F(\bx^k)\nonumber\\
 &\leq& F(\bz ^{k})  -  ({\sigma}/{2}) \|\bx^{k+1} -\bz ^{k}\|^2- F(\bx^k)\nonumber \\
&\leq&  -  ({\sigma}/{2})  \|\bz ^{k}-\bx^k \|^2 -  ({\sigma}/{2})  \|\bx^{k+1}-\bz ^{k}\|^2\nonumber\\
&\leq&  -  ({\sigma}/{2})  \|\bz ^{k}-\bx^k \|^2~~\text{or}~~ -   ({\sigma}/{4}) \|\bx^{k+1}-\bx^{k}\|^2. 
\end{eqnarray*}
Otherwise, $\bx^{k+1}=\bz ^{k}$, which by \eqref{descent-lemma-fact-2-00} yields
\begin{eqnarray*}
F(\bx^{k+1})  
&\leq& F(\bx^k)  -  ({\sigma}/{2})  \|\bx^{k+1}-\bx^{k}\|^2\\
&=&F(\bx^k) -  ({\sigma}/{4})  \|\bz ^{k}-\bx^k \|^2 -   ({\sigma}/{4}) \|\bx^{k+1}-\bx^{k}\|^2.
\end{eqnarray*} 
Therefore, both cases lead to \eqref{descent-lemma-property}.
\end{proof}
Based on the above lemma, we have the following direct corollary.
\begin{corollary}\label{cor-strong-smooth} Let  $\{\bx^{k}\}$ be the sequence generated by  {PSNP} or {PCSNP}. Suppose   $f$ is $\sigma_0$-strongly smooth. Then any accumulating point of $\{\bx^{k}\}$ is a  P-stationary point $\bx^*$ with $\alpha$ given as (\ref{def-alpha-k}). If further assume that second-order sufficient condition  (\ref{2nd-sufficient-cond}) holds at  $\bx^*$ and choose $\sigma\in(0,\sigma_*)$, then whole sequence $\{\bx^{k}\}$ converges to $\bx^*$ superlinearly, which is a unique local minimizer.  If further assume $f$ is $SC^2$ on ${\mathbb N}( 0,c_0)$, then the sequence converges quadratically.
\end{corollary}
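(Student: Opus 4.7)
The plan is to observe that $\sigma_{0}$-strong smoothness is a drop-in replacement for Assumptions \ref{assumptions-f} and \ref{assumptions-f-sc}, in the sense that every step of the convergence analysis in Sections \ref{sec:global-convergence} and \ref{local-quad-rate} goes through once the descent inequality \eqref{descent-lemma-property} is available. Lemma \ref{lemma-strong-descent} already delivers \eqref{descent-lemma-property}, so the first move is to repeat the telescoping argument of Lemma \ref{sequence-convergence}: the lower boundedness of $F$ combined with \eqref{descent-lemma-property} forces $\{F(\bx^{k})\}$ to converge, drives $\bz^{k}-\bx^{k}\to 0$ and $\bx^{k+1}-\bx^{k}\to 0$, and then passing to a subsequential limit in the inclusion \eqref{armijio-descent-property} via \cite[Theorem 1.25]{RW1998} certifies any accumulation point $\bx^{*}$ as a P-stationary point with the same $\alpha$ from \eqref{def-alpha-k}. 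This settles the first claim.

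For the second claim, note that strong smoothness implies $\nabla f$ is $\sigma_{0}$-Lipschitz on $\R^{n}$, and in particular $f$ is $LC^{1}$ at $\bx^{*}$. Theorem \ref{second-order-necessary-condition} 2) then applies under \eqref{2nd-sufficient-cond} and produces an isolated local minimizer $\bx^{*}$ with the quadratic growth property \eqref{2nd-sufficient-cond-growth}. The support identification and whole-sequence convergence arguments in Theorem \ref{lemma-accumulating-convergence} rely only on the uniform lower bound \eqref{prox-lq-loca-strong-convex}, on the vanishing increments $\bz^{k+1}-\bz^{k}\to 0$, and on the isolation of $\bx^{*}$; none of these invoke semismoothness, so they transfer verbatim and yield convergence of the entire sequence to $\bx^{*}$.

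For the rates, I would rerun the proofs of Lemma \ref{newton-step-admit} and Theorem \ref{quadratic-theorem} after identifying $\CS^{k}\equiv\CS^{*}$ for sufficiently large $k$. Once the support is fixed, the regularizer $\lambda\|\cdot_{\CS^{*}}\|_{q}^{q}$ is $C^{\infty}$ on the region where $|w^{k}_{i}|\geq c(\alpha\lambda,q)$ for $i\in\CS^{*}$, so the semismooth expansions in \eqref{Md-Ed} and \eqref{Md-Ed-2} reduce to analogous expansions for $\nabla f$ alone. The global Lipschitz continuity of $\nabla f$, the mean-value formula \eqref{MVT}, and the uniform bound on $\partial^{2}f$ along the compact level set deliver an $o(\|\bd\|)$ residual that suffices for the superlinear rate; adding the local Lipschitz continuity of $\nabla^{2}_{\CS^{*}} f$ at $\bx^{*}$ upgrades the Taylor residual to $O(\|\bd\|^{2})$ and gives the quadratic rate. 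The \textbf{main obstacle} is justifying the $o(\|\bd\|)$ residual from global Lipschitz continuity of $\nabla f$ rather than from genuine semismoothness, since Lipschitz does not imply semismooth in general; this is handled by restricting the analysis to the fixed subspace $\S$ defined in \eqref{sub-space}, where $\nabla f$ is the only nonsmooth-gradient ingredient and its Clarke Jacobian is uniformly bounded by $\sigma_{0}$, so the residual control collapses to a standard Taylor argument inside $\S$.
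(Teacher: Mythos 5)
Your first two claims follow the paper's route exactly: Lemma \ref{lemma-strong-descent} supplies the descent inequality \eqref{descent-lemma-property}, after which the arguments of Lemma \ref{sequence-convergence} and Theorem \ref{lemma-accumulating-convergence} transfer verbatim (strong smoothness gives a globally Lipschitz, hence $LC^1$, gradient, so Theorem \ref{second-order-necessary-condition} 2) applies and the accumulation point is an isolated local minimizer). That part is fine and is precisely what the paper does.

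The gap is in your resolution of what you yourself flag as the main obstacle. You assert that the uniform bound $\|\H\|\leq\sigma_0$ on $\partial^2 f$, together with \eqref{MVT}, ``delivers an $o(\|\bd\|)$ residual.'' It does not: boundedness of the generalized Hessian only gives $O(\|\bd\|)$ control of the gradient increment, whereas the Newton-step admission needs the remainder $o(\|\bd^k\|^2)$ in the expansion of $E(\bz^k-\bd^k;\CS^k)$, and the rate estimate \eqref{linear-rate-fact-30} needs the second-order cancellation $\nabla_{\CS^k}E(\bz^k)-\nabla_{\CS^k}E(\bx^*)-\M^k(\bz^k_{\CS^k}-\bx^*_{\CS^k})=o(\|\bz^k-\bx^*\|)$. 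Those are exactly what semismoothness (or a genuine second-order Taylor expansion) provides, and neither follows from a Jacobian bound --- a merely Lipschitz gradient need not even admit directional derivatives, and the crude estimate $\langle\H^k_{\CS^k}\bv,\bv\rangle\leq c_*\|\bv\|^2$ combined with $\langle\M^k\bv,\bv\rangle\geq\sigma_*\|\bv\|^2$ does not make $-2\langle\M^k\bv,\bv\rangle+\langle\H^k_{\CS^k}\bv,\bv\rangle$ negative. The paper's proof instead writes $2E(\bz^k-\bd^k;\CS^k)=2E(\bz^k;\CS^k)-2\langle\nabla E(\bz^k),\bd^k\rangle+\langle\nabla^2E(\bz^k)\bd^k,\bd^k\rangle+o(\|\bd^k\|^2)$, i.e., it invokes a classical second-order expansion of $E(\cdot;\CS^k)$ at $\bz^k$ under strong smoothness, substitutes the Newton equation \eqref{Newton-descent-property} and \eqref{2nd-sufficient-cond-new} to get $\beta_k\equiv1$, and then defers to Theorem \ref{quadratic-theorem}; the quadratic rate uses the added hypothesis that $\nabla^2_{\CS^*}f$ is locally Lipschitz at $\bx^*$, which makes $\nabla f$ strongly semismooth there so that \eqref{Md-Ed-2} applies. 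To repair your argument you must make that second-order-expansion step explicit (or otherwise establish semismoothness of $\nabla f$ at $\bx^*$); the appeal to ``Clarke Jacobian bounded by $\sigma_0$'' is not a substitute.
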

\begin{proof} Based on Lemma \ref{lemma-strong-descent}, the same reasoning to show Lemma \ref{sequence-convergence} and Theorem \ref{lemma-accumulating-convergence} allows us to prove the accumulation point and sequence convergence of $\{\bx^{k}\}$. By \eqref{xk-wx-Gamma}, $f$ is strongly smooth (so is $E(\cdot;\CS^k)$), and $\sigma\in(0,\sigma_*)$, we can verify that
 \begin{eqnarray*}
	&& 2F(\bz ^{k}-\bd^k)= 2E( \bz ^{k}-\bd^k;\CS^k) \\
	&=& 2E( \bz ^{k};\CS^k) - 2\langle\nabla  E( \bz ^{k}),  \bd^k \rangle +    \langle \nabla^2 E(\bz ^{k})\bd^k  ,  \bd^k\rangle +o(\|\bd^k\|^{2})\\
	&=& 2E( \bz ^{k};\CS^k) - 2\langle\M^k\bd^k_{\CS^k},  \bd^k_{\CS^k}\rangle +  \langle \M^k\bd^k_{\CS^k},  \bd^k_{\CS^k}\rangle+o(\|\bd^k\|^{2})\\
	  	  &\leq & 2E( \bz ^{k};\CS^k) -  {\sigma_*} \langle   \bd^k_{\CS^k},  \bd^k_{\CS^k}\rangle + o(\|\bd^k\|^{2})\\ 
	 & \leq& 2F(\bz ^{k})- {\sigma} \|\bd^k\|^2. 
\end{eqnarray*}
Therefore, Newton step is always admitted with $\beta_k\equiv 1$. The remaining proof is the same as that of proving Theorem \ref{quadratic-theorem}, and thus is omitted here.
\end{proof}
We would like to highlight that to bound $\sigma_0$ in \eqref{def-consts},  we assume the coerciveness of $f$ when $q=0$ to ensure the boundedness of $\Omega^0$. However, if $f$ is strongly smooth, then any Hessian matrix of $f$ is bounded, thereby enabling bounded $\sigma_0$. Consequently, the requirement for the coerciveness of $f$ is no longer necessary.  
\subsection{Some examples}
\begin{example}[CS problems] \label{exp-cs} The objective function takes the form of 
\begin{eqnarray}\label{obj-CS} f_{cs}(\bx)=(1/2)\|\A\bx-\bb\|^2,
\end{eqnarray} where $\A\in\R^{m\times n}$ and $\bb\in\R^{m}$. 
\end{example}  
\begin{corollary}\label{quadratic-lemma-cs}   For problem (\ref{rso}) with $f=f_{cs}$, let $\bx^*$ be the limit of sequence $\{\bx^k\}$ generated by  {PSNP} or {PCSNP}.  If second-order sufficient condition (\ref{2nd-sufficient-cond}) holds at $\bx^*$ and choose $\sigma\in(0,\sigma_*)$, then the whole sequence converges to $\bx^*$, a unique local minimizer. Moreover, 
\begin{enumerate}
\item[1)] for $q=0$, the sequence converges within finitely many steps. 
\item[2)] for $q\in(0,1)$,  the sequence converges quadratically.
\end{enumerate}
\end{corollary}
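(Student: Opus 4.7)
The plan is to leverage the specific structure of $f_{cs}$ together with the general convergence theory already established, in particular Corollary \ref{cor-strong-smooth}. Observe first that $\nabla f_{cs}(\bx)=\A^\top(\A\bx-\bb)$ and $\nabla^2 f_{cs}(\bx)\equiv \A^\top\A$. Hence $f_{cs}$ is globally strongly smooth with constant $\sigma_0=\|\A^\top\A\|$, and $\nabla^2 f_{cs}$ is trivially (locally) Lipschitz because it is a constant matrix. Under second-order sufficient condition \eqref{2nd-sufficient-cond}, Corollary \ref{cor-strong-smooth} therefore applies, yielding whole-sequence convergence to the unique local minimizer $\bx^*$.

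For part 2) with $q\in(0,1)$, the quadratic rate is a direct consequence of Corollary \ref{cor-strong-smooth}: the hypotheses of strong smoothness of $f$ and local Lipschitz continuity of $\nabla^2_{\CS^*}f$ at $\bx^*$ are both satisfied by the remarks above. No further argument is required.

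For part 1) with $q=0$, I would argue finite termination as follows. By Theorem \ref{lemma-accumulating-convergence} 1) (support identification), there exists $k_0$ such that $\CS^k\equiv\supp(\bx^k)\equiv \CS^*$ for all $k\ge k_0$; by the same argument used in Lemma \ref{newton-step-admit} (and the reasoning inside Corollary \ref{cor-strong-smooth}) there exists $k_1\ge k_0$ such that the Newton step is admitted with $\beta_k\equiv 1$ for all $k\ge k_1$. The crucial observation is that for $q=0$, on the open set $\S=\{\bx:\supp(\bx)=\CS^*\}$ the penalty $\lambda\|(\cdot)_{\CS^*}\|_0^0\equiv\lambda|\CS^*|$ is constant, so the restricted objective
\begin{eqnarray*}
E(\bx;\CS^*)=\tfrac{1}{2}\|\A\bx-\bb\|^2+\lambda|\CS^*|
\end{eqnarray*}
is a pure quadratic in $\bx_{\CS^*}$ with constant Hessian $\M^k=(\A^\top\A)_{\CS^*}$. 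Since $\bz^k\in\S$ for $k\ge k_1$, a single Newton step from $\bz^k$ solves the normal equations exactly:
\begin{eqnarray*}
\bx^{k+1}_{\CS^*}=\bz^k_{\CS^*}-\M_k^{-1}\nabla_{\CS^*}E(\bz^k)=\bx^*_{\CS^*},
\end{eqnarray*}
where the last equality uses $\nabla_{\CS^*}E(\bx^*)=0$ (Lemma \ref{lemma-grad-gamma-0}) and the invertibility of $\M^k$ ensured by \eqref{2nd-sufficient-cond}. Combined with $\bx^{k+1}_{\overline{\CS}^*}=0=\bx^*_{\overline{\CS}^*}$, this gives $\bx^{k+1}=\bx^*$, i.e., termination at iteration $k_1+1$.

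The main (only) subtlety I anticipate is making sure all the ingredients—support identification, admission of a unit Newton step, and invertibility of $\M^k$—are simultaneously available at the iteration where we want to declare finite termination; but each of these is already secured by the results cited, so the argument reduces to assembling them and invoking the exactness of Newton's method on a strictly convex quadratic restricted to the identified support.
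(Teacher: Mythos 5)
Your proposal is correct and follows essentially the same route as the paper: part 2) is exactly the paper's appeal to Corollary \ref{cor-strong-smooth} after noting that $\nabla^2 f_{cs}\equiv\A^\top\A$ is constant, and part 1) is the paper's observation that, once the support is identified and the unit Newton step is admitted, the constancy of $\M^k=(\A^\top\A)_{\CS^*}$ makes the Newton update exact, forcing $\bx^{k+1}=\bx^*$. The only cosmetic difference is that the paper concludes $\bx^{k+1}=\bx^*$ from $\M^k(\bx^{k+1}_{\CS^k}-\bx^*_{\CS^k})=0$ via the positive-definiteness bound \eqref{linear-rate-fact-33} rather than by explicitly inverting $\M^k$ as you do; the two arguments are equivalent under \eqref{2nd-sufficient-cond}.
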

\begin{proof} For $q=0$, $\M^k=(\A^\top \A)_{\CS^k}$ and $\nabla_{\CS^k} E( \cdot) =(\A^\top (\A(\cdot)-\bb))_{\CS^k}$, leading to $\M^k (\bx^{k+1}_{\CS^k}-\bx^*_{\CS^k}) =0$ in \eqref{linear-rate-fact-30}, which by \eqref{linear-rate-fact-33} shows $\bx^{k+1}=\bx^*$.  For $q\in(0,1)$, the conclusion follows Corollary \ref{cor-strong-smooth}.\end{proof}
From Corollary \ref{suf-2nd-sufficient-cond}, a sufficient condition to ensure  (\ref{2nd-sufficient-cond}) is $ \lambda_{\min} \left((\A^\top \A)_{\CS^*} \right) > \frac{q}{2\alpha},$ which 
generally is weaker than the so-called $|\CS^*|$-singularity \cite{Beck13} (namely, any $|\CS^*|$ columns of $\A$ are linearly independent), RIP \cite{jiao2015primal}, and SRC \cite{huang2018constructive} since it only imposes the condition on a fixed sub-matrix $\A_{:\CS^*}$ rather than any sub-matrices of $\A$, where $\A_{:\CS^*}$ is the sub-matrix of $\A$ removing all columns  in $\overline{\CS}^*$.

\begin{example}[$L_2$ norm regularized  logistic regression \cite{bahmani2013greedy}]\label{exp-lr} The objective function  is
\begin{eqnarray}\label{obj-LR}
f_{lr}(\bx)=\begin{array}{l}\frac{1}{m}\sum_{i=1}^{m}\left(\ln(1+ e^{\langle\ba_i,\bx\rangle})-b_i\langle\ba_i,\bx\rangle\right)+\frac{\mu}{2}\|\bx\|^2,\end{array}
\end{eqnarray}    
where  $\ba_i \in \mathbb{R}^{n}$ and $b_i \in \{0,1\},i=1,2,\ldots,m$ are samples and $\mu>0$. 
\end{example}
This function is strongly smooth and strongly convex  \cite[Lemma 2.2-2.4]{wang2019greedy}, and its Hessian is Lipschitz continuous \cite[Proposition 2.1]{wang2021extended}, resulting in the following conclusion from Corollary \ref{cor-strong-smooth}.
\begin{corollary}\label{quadratic-lemma-lr} For problem (\ref{rso}) with $f=f_{lr}$,  let $\bx^*$ be the limit of sequence $\{\bx^k\}$ generated by  {PSNP} or {PCSNP}.  If second-order sufficient condition (\ref{2nd-sufficient-cond}) holds at $\bx^*$ and choose $\sigma\in(0,\sigma_*)$, then the sequence converges to a unique local minimizer quadratically. 
\end{corollary}

\begin{example}[Squared Hinge loss support vector machine (SVM) \cite{chang2008coordinate}] \label{exp-svm} The objective function is
  \begin{eqnarray}\label{obj-svm} 
f_{svm}(\bx)=\begin{array}{l}\frac{1}{2}\left(\sum_{i=1}^{n-1} x_i^2 + cx_n^2  \right) +\frac{\mu}{2m}\sum_{i=1}^{m}\max\left\{1-y_i \left\langle (\ba_i^\top~ 1)^\top,\bx \right\rangle, 0\right\}^2, \end{array}
\end{eqnarray}    
where  $\ba_i \in \mathbb{R}^{n-1}$ and $y_i \in \{1,-1\},i=1,2,\ldots,m$ are samples,  $\mu>0$, and $c$ is a small positive scalar (e.g., $c=0.01$ used in our numerical experiments). 
\end{example}This function is strongly convex and also strongly semismooth everywhere by \cite[Proposition 1.75.]{izmailov2014newton}.  Hence, a direct result of Theorems \ref{lemma-accumulating-convergence} and \ref{quadratic-theorem} is given as follows.
\begin{corollary}\label{quadratic-lemma-svm} For problem (\ref{rso}) with $f=f_{svm}$, let $\bx^*$ be the limit of sequence $\{\bx^k\}$ generated by  {PSNP} or {PCSNP}.  If second-order sufficient condition (\ref{2nd-sufficient-cond}) holds at $\bx^*$ and choose $\sigma\in(0,\sigma_*)$, then the sequence converges to a unique local minimizer quadratically. 
\end{corollary}

Recall Corollary \ref{suf-2nd-sufficient-cond}, when $q=0$ condition \eqref{2nd-neccssary-cond-cs} reduces to  $\lambda_{\min} \left(\H^* \right) > 0. $ We note that  strongly convex functions (e.g., $f_{lr}$ and $f_{svm}$) must satisfy such a condition. Therefore, the second-order sufficient condition in Corollaries  \ref{quadratic-lemma-lr} and \ref{quadratic-lemma-svm} can be removed when $q=0$.

\begin{example}[Quadratic CS problems \cite{shechtman2011sparsity, Beck13, shechtman2014gespar, zhou2022gradient}] \label{QCS} The objective function takes the form of
  \begin{eqnarray}\label{obj-qcs}
f_{qcs}(\bx)=\begin{array}{l}\frac{1}{4m}\sum_{i=1}^{m}(\langle \A_i \bx,\bx\rangle-b_i)^2, \end{array}
\end{eqnarray}    
where  $\A_i \in \mathbb{R}^{n\times n}$ and $b_i \in \R,i=1,2,\ldots,m$. 
\end{example}
This function is not strongly smooth and not convex, but it is twice continuously differentiable and also  $SC^2$ on any bounded region. To satisfy Assumption \ref{assumptions-f}, a sufficient condition to ensure $f_{qcs}$ to be coercive when $q=0$ is $\lim_{\|\bx\|\to\infty}\langle \A_i \bx,\bx\rangle=\infty$ for some $i$. Particularly, this can be guaranteed if $\A_i$ is positive definite.  Hence, a direct result of Theorems \ref{lemma-accumulating-convergence} and \ref{quadratic-theorem} is given as follows.
\begin{corollary}\label{quadratic-lemma-svm} For problem (\ref{rso}) with $f=f_{qcs}$, suppose that $\A_i$ is positive definite for some $i\in\{1,2,\ldots,m\}$ when $q=0$. Let $\bx^*$ be the limit of sequence $\{\bx^k\}$ generated by  {PSNP} or {PCSNP}.  If second-order sufficient condition (\ref{2nd-sufficient-cond}) holds at $\bx^*$ and choose $\sigma\in(0,\sigma_*)$, then the sequence converges to a unique local minimizer quadratically. 
\end{corollary}
\section{Numerical Experiments}\label{sec:num}
In this section, we conduct some numerical experiments to showcase the performance of  {PSNP} and {PCSNP} (available at \url{https://github.com/ShenglongZhou/PSNP})  using MATLAB (R2023b) on a laptop of 64GB memory and Core i9. We initialize $\bx^0=0, \sigma=10^{-4}$ and adopt the  Armijo line search to choose $\alpha_k$ and $\beta_k$, as described in Remark \ref{remark-alg} A. To be more specific,  we fix constant $\tau= 1$ for CS problems and $\tau=10$ for SVM problems, $\gamma =0.5$ and let $\alpha_k = \tau\gamma^{t_{k}}$ and $\beta_k =  \gamma^{s_{k}}$, where $t_k$ and  $s_k$ are the smallest integers in $\{0, 1, 2, \ldots\}$ satisfying conditions (\ref{armijio-descent-property-1}) and (\ref{Newton-descent-property-1}), respectively. 
 
\subsection{Solving CS problems}\label{ssec:cs}
We first demonstrate the performance of {PSNP} for solving the CS problems,
\begin{equation*} 
\begin{array}{l} 
\min_{\bx\in\R^n}~f_{cs}(\bx) +   \lambda \|\bx\|^q_q, \end{array}
\end{equation*} 
where $f_{cs}$ is defined as \eqref{obj-CS} and  $\A\in\R^{m\times n}$ and $\bb\in\R^{m}$ are generated as follows:  Let $\A\in\mathbb{R}^{m\times n}$  be a random Gaussian matrix with each entry identically and independently distributed from standard normal distribution $\mathcal{N}(0,1)$. We then normalize each column of $\A$ to have a unit length. Next, $s$ indices and values of non-zero entries of `ground truth' signal $\bx^*$ are randomly selected from $\{1,2,\ldots,n\}$ and $[-1.5,-0.5]\cup [0.5,1.5]$. Finally, let $\bb=A \bx^* + {\rm nf}\cdot {\boldsymbol \varepsilon}$, where ${\rm nf}$ is the noise ratio and ${\boldsymbol \varepsilon}$ is the noise with $\varepsilon_i\sim\mathcal{N}(0,1)$ for $i=1,2\ldots,m$.

 \subsubsection{Benchmark methods}
There is a large number of algorithms that have been proposed for CS problems. However, our focus is on evaluating the performance of algorithms directly developed to solve problem \eqref{rso}.  Therefore, we select 6 ones shown in Table  \ref{tab:algs} where {IHT$_0$}  is called the iterative hard thresholding \cite{blumensath2008iterative},   {IHT$_{1/2}$} represents the iterative half thresholding \cite{xu2012}, and {TFPC$_{2/3}$} stands for $q$-thresholding fixed point continuation \cite{peng2018global}  (where we fix $q=2/3$). Their main updates take the form of
\begin{eqnarray}\label{PGD}
\bx^{k+1}= {\rm Prox}_{\alpha \lambda\|\cdot\|_q^q} (\bx^{k}-\alpha \nabla f(\bx^{k}) )
\end{eqnarray}
and they correspond this scheme with $q=0, 1/2$, and $2/3$, respectively.   As stated in \cite{blumensath2008iterative, xu2012}, one can choose $\alpha\in(0,  \|\A\|^{-2})$ to preserve the convergence, which however may degrade their performance. So similar to {PSNP}, we employ the Armijo line search to adaptively update $\alpha$. Moreover, we write {PSNP} as {PSNP}$_0$, {PSNP}$_{1/2}$, and {PSNP}$_{2/3}$ if $q=0, 1/2$, and $2/3$, respectively. A similar rule is also applied for PCSNP and {IRucLq}.
Differing from {TFPC$_{2/3}$} increasing $\lambda$ iteratively, we fix $\lambda=0.025(1+q)\|\A^\top \bb\|_{\infty}$ in model \eqref{rso}. Finally, to conduct fair comparisons, we initialize all algorithms by $\bx^0=0$ and terminate them if either $k>10000$ or at the $k$th iteration two conditions $\|\nabla_{\CS^k}  F(\bx^{k})\|_{\infty} <10^{-6}$ and $\CS^{k+1}=\CS^k$ are satisfied.
 
  \begin{table}[th]
	\renewcommand{\arraystretch}{1.0}\addtolength{\tabcolsep}{11pt}
	\caption{Parameters of benchmark methods.}
	\label{tab:algs} 
		\begin{tabular}{lrclrclr}
			\toprule
			\multicolumn{2}{c}{$q=0$}&&\multicolumn{2}{c}{$q=1/2$}&&\multicolumn{2}{c}{$q=2/3$}\\\cmidrule{1-2}\cmidrule{4-5}\cmidrule{7-8} 
		 Algs. & Ref. &&Algs. & Ref. &&Algs. & Ref.  \\\toprule
		 {PSNP}$_0$& Our&&{PSNP}$_{1/2}$& Our&&{PSNP}$_{2/3}$& Our\\ 
		 {PCSNP}$_0$& Our&&{PCSNP}$_{1/2}$& Our&&{PCSNP}$_{2/3}$& Our\\ 
		 {NL0R} & \cite{zhou2021newton}&&{IRucL}$_{1/2}$&\cite{lai2013improved}&&{IRucL}$_{2/3}$&\cite{lai2013improved}\\
		 {IHT}$_0$ & \cite{blumensath2008iterative} &&{IHT}$_{1/2}$& \cite{xu2012}&&{TFPC}$_{2/3}$  & \cite{peng2018global} \\ \botrule
		\end{tabular} 
\end{table}

To evaluate the algorithmic performance, we let $\bx$ be the solution obtained by one algorithm and report three factors: the computational time (in seconds), 
the relative error (ReErr), and the false detection rate (FDR). The latter two are defined by 
$${\rm ReErr}:=   \frac{\|\bx-\bx^*\|}{\|\bx^*\|},\qquad {\rm FDR}:=  \frac{|\{i: x_i=0, x_i^*\neq0\}|}{s}+\frac{|\{i: x_i\neq 0, x_i^*=0\}|}{n-s}.$$ We note that smaller values for these metrics indicate better performance.

\begin{figure}[t]
\centering
\includegraphics[scale=.7]{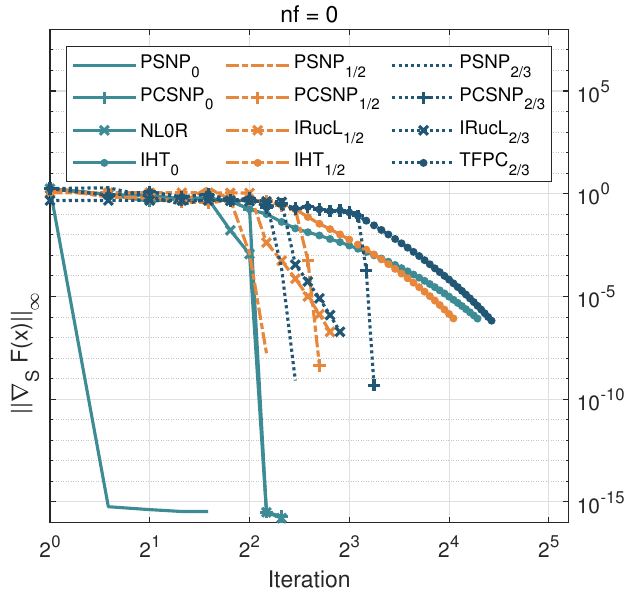}\qquad
\includegraphics[scale=.7]{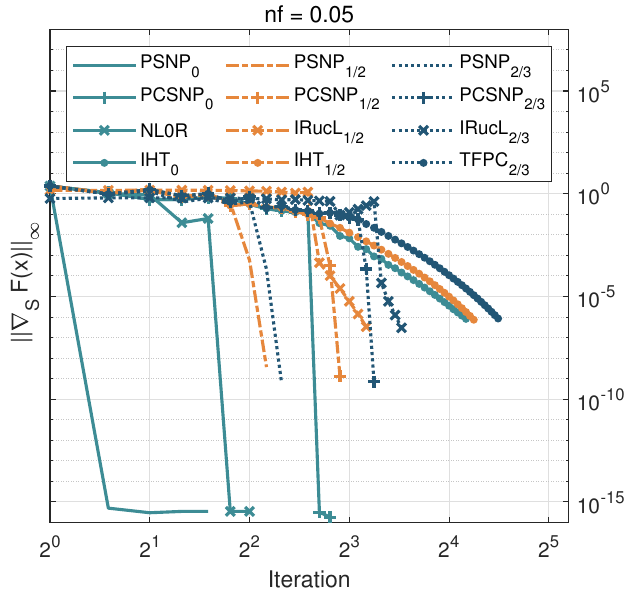}
\caption{Convergence performance. \label{fig:convergence-rate}}
\end{figure}  

 \subsubsection{Numerical comparison}

 Fig. \ref{fig:convergence-rate}  illustrates the convergence performance of each algorithm, where $(m,n,s)=(500,2000,50)$.  For both noise and noiseless settings, $\|\nabla_{\CS^k}  F(\bx^{k})\|_{\infty} $ produced by {PSNP}, PCSNP, {NL0R},  and {IRucLq} drops dramatically when the iteration is over a certain threshold, demonstrating a fast convergence rate. This is because these four algorithms belong to the family of second-order methods (using second-order information, i.e., Hessian). By contrast, {IHT}$_0$, {IHT}$_{1/2}$, and {TFPC}$_{2/3}$ converge slowly.  
Next, we evaluate the efficiency of selected algorithms for solving \eqref{rso} in different scenarios. We alter one factor of $(m,n,s,{\rm nf})$  to see its effect by fixing the other three factors. 
%The detailed experiments are presented as follows. 

  \begin{figure}[b]
\centering
\includegraphics[scale=.51]{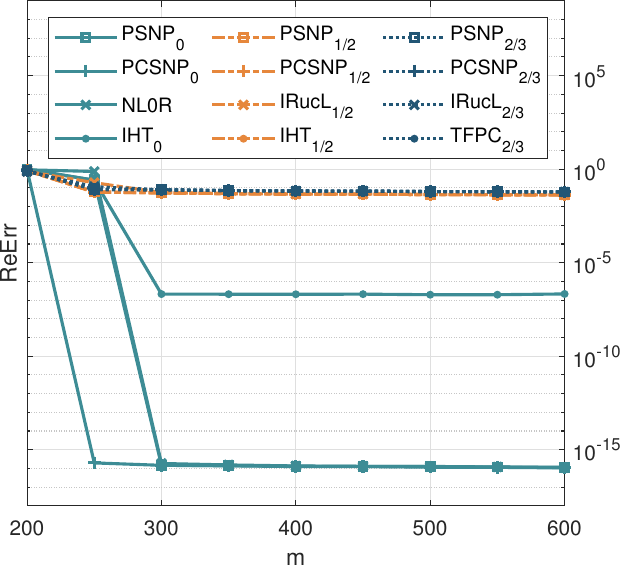}
\includegraphics[scale=.51]{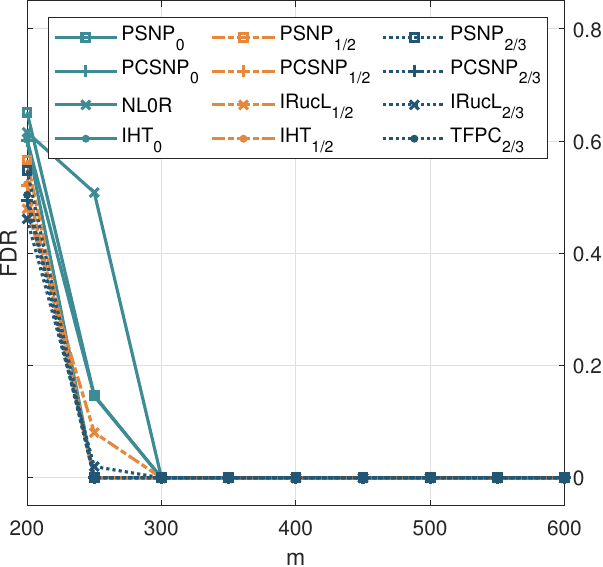}
\includegraphics[scale=.51]{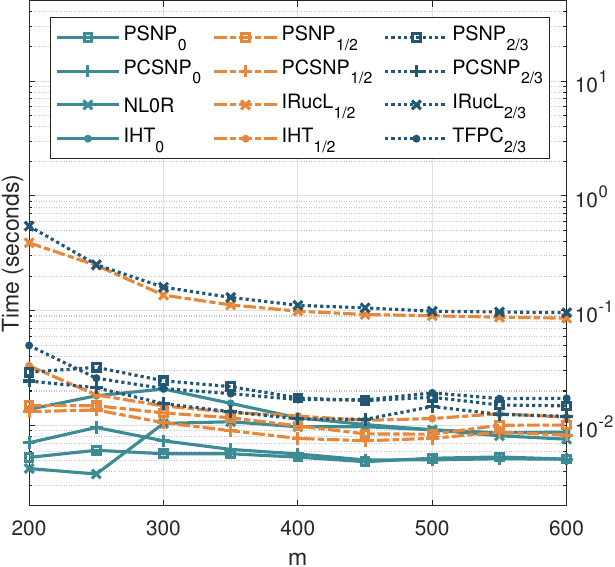}
\caption{Effect of $m$. \label{fig:effect-m}}
\vspace{-3mm}
\end{figure}

 \begin{figure}[b]
\centering
\includegraphics[scale=.51]{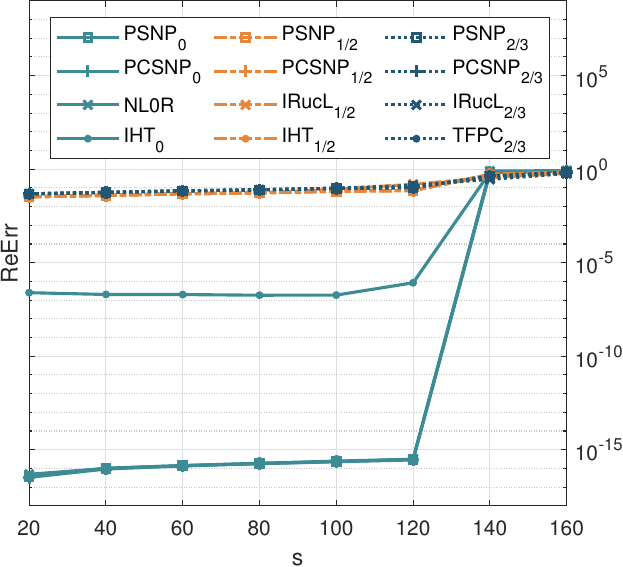}
\includegraphics[scale=.51]{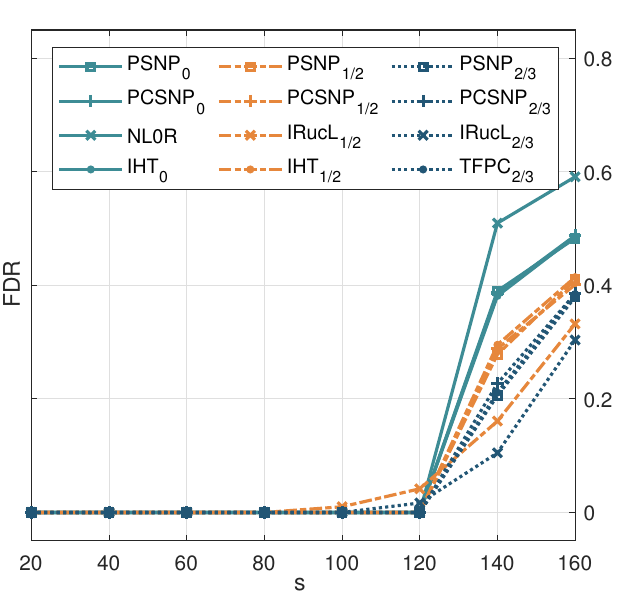}
\includegraphics[scale=.51]{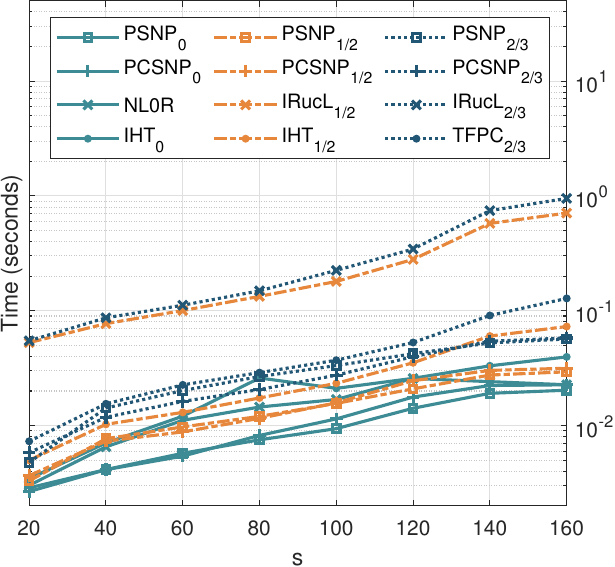}
\caption{Effect of $s$. \label{fig:effect-s}}
\vspace{-3mm}
\end{figure}

 \begin{figure}[t]
\centering
\includegraphics[scale=.51]{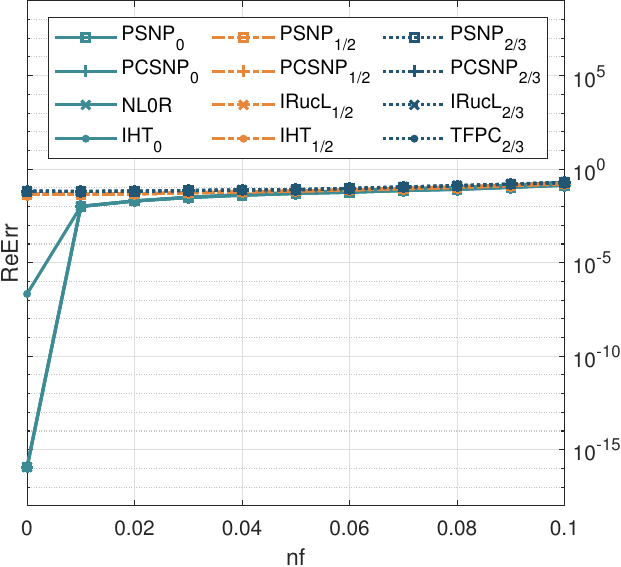} 
\includegraphics[scale=.51]{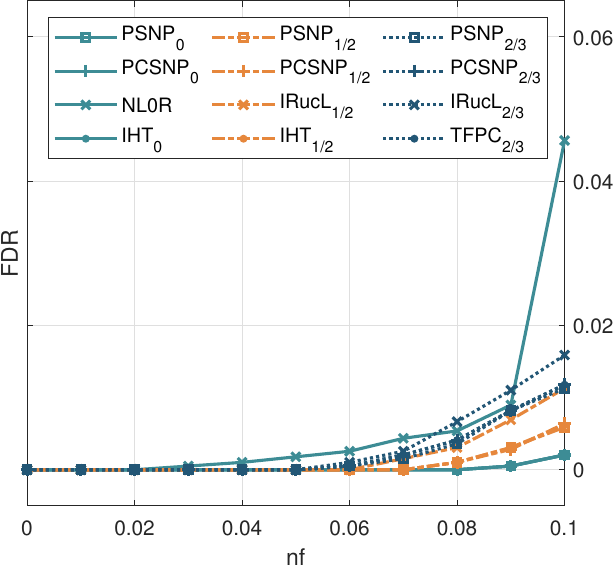}
\includegraphics[scale=.51]{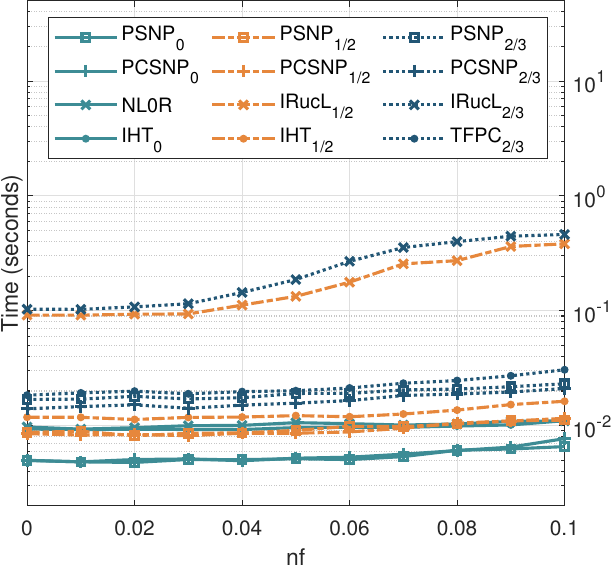}
\caption{Effect of ${\rm nf}$. \label{fig:effect-nf}} \vspace{-3mm}
\end{figure}

\begin{itemize} 
\item[a)] Effect of $m$. To investigate this, we alter ${m\in\{200,300,\ldots,800\}}$ and fix ${(n,s,{\rm nf})=(2000,50,0)}$. Then for each case of $(m,2000,50,0)$, we run 50 trials and report the average results in terms of the median values.  It can be discerned from Fig. \ref{fig:effect-m} that the larger $m$ the easier to detect the `ground truth' signal. Apparently, when $m\geq 300$, {PSNP}$_{0}$, {PCSNP}$_{0}$, and {NL0R} deliver the lowest ReErr. For each $q\in\{0,1/2,2/3\}$, {PSNP} and {PCSNP} run faster than the other algorithms for most scenarios, such as {PSNP}$_{1/2}$ and {PCSNP}$_{1/2}$ perform quicker than IHT$_{1/2}$ and IRucL$_{1/2}$.

\item[b)] Effect of $s$. We fix $(m,n,{\rm nf})=(500,2000,0)$ but alter $s\in\{20,40,\ldots,100\}$.  The median values over 50 trials are presented in  Fig. \ref{fig:effect-s}. It is evident that the larger $s$ the harder to recover the signal. In particular, when $s\leq120$ {PSNP}$_{0}$, {PCSNP}$_{0}$, and {NL0R} generate  the lowest ReErr. However, when $s>120$, the positive FDR indicates that all algorithms are unable to identify the support set of the true signal. For each $q\in\{0,1/2,2/3\}$, {PSNP} and {PCSNP} run faster than the other two algorithms, such as {PSNP}$_0$ and {PCSNP}$_0$ perform quicker than {NL0R} and IHT$_0$.

\item[c)] Effect of {\rm nf}. We fix $(m,n,s)=(500,2000,50)$ but increase ${\rm nf}$ over range $\{0,0.02,\ldots,$ $0.1\}$.  The median results over 50 trials are reported in   Fig. \ref{fig:effect-nf}.  One can observe that {NL0R} is more sensitive to noise. It fails to recover the signals after  {\rm nf} over 0.04 due to DFR$>0$.  By contrast, {PSNP}$_{0}$ and {PCSNP}$_{0}$, and IHT$_0$ are robust to {\rm nf} as they always produce DFR close to $0$. Once again, for each $q\in\{0,1/2,2/3\}$, {PSNP} and {PCSNP} run faster than the other two algorithms.

\item[d)] Effect of a higher dimension. We note that {IRucLq} has to solve an $m\times m$ order linear equation at every step, leading to a long computational time if $m$ is large (e.g., $m\geq20000$). Therefore, 
To test all algorithms solving problems in larger sizes, we generate sparse matrices $\A$ with $1\%$ non-zero entries. We fix $(m,n,s)=(20000,100000,2000)$ and consider noise factor   ${\rm nf}\in\{0, 0.05,0.1\}$. The median results over 20 trials are recorded in Table \ref{tab:higher-dim}. For each case of $(q, {\rm nf})$,  {PSNP}  and {PCSNP} attain the lowest ReErr and FDR and the shortest computational time, demonstrating the superior performance.
\end{itemize}

\begin{table}[th]
	\renewcommand{\arraystretch}{1}\addtolength{\tabcolsep}{1.5pt}
	\caption{Effect of a higher dimension.} 
	\label{tab:higher-dim} 
		\begin{tabular}{lcccccccccccc}
			\toprule
			&\multicolumn{3}{c}{{\rm nf} $=0$}&&\multicolumn{3}{c}{{\rm nf} $=0.05$}&&\multicolumn{3}{c}{{\rm nf} $=0.1$}\\ \cmidrule{2-4}\cmidrule{6-8}\cmidrule{10-12}
	Algs.	&	ReErr	&	FDR	&	Time	&	&	ReErr	&	FDR	&	Time && ReErr	&	FDR	&	Time	\\\midrule
PSNP$_0$	&	0.000 	&	0	&	0.281 	&	&	0.051 	&	0.00e-0	&	0.300 	&	&	0.126 	&	9.05e-3	&	0.468 	\\
PCSNP$_0$	&	0.000 	&	0	&	0.329 	&	&	0.051 	&	0.00e-0	&	0.352 	&	&	0.125 	&	8.77e-3	&	0.553 	\\
NL0R	&	0.000 	&	0	&	0.604 	&	&	0.059 	&	1.86e-3	&	0.545 	&	&	0.172 	&	2.98e-2	&	0.566 	\\
IHT$_0$	&	0.000 	&	0	&	0.821 	&	&	0.051 	&	0.00e-0	&	0.870 	&	&	0.125 	&	8.83e-3	&	1.025 	\\\midrule
PSNP$_{1/2}$	&	0.055 	&	0	&	0.507 	&	&	0.077 	&	0.00e-0	&	0.550 	&	&	0.153 	&	1.04e-2	&	0.670 	\\
PCSNP$_{1/2}$	&	0.055 	&	0	&	0.410 	&	&	0.077 	&	0.00e-0	&	0.479 	&	&	0.156 	&	1.09e-2	&	0.995 	\\
IRucL$_{1/2}$	&	0.056 	&	0	&	30.41 	&	&	0.080 	&	1.51e-3	&	44.89 	&	&	0.175 	&	1.76e-2	&	156.0 	\\
IHT$_{1/2}$	&	0.055 	&	0	&	0.832 	&	&	0.077 	&	0.00e-0	&	0.868 	&	&	0.156 	&	1.09e-2	&	1.318 	\\\midrule
PSNP$_{2/3}$	&	0.080 	&	0	&	0.650 	&	&	0.097 	&	0.00e-0	&	0.758 	&	&	0.181 	&	1.03e-2	&	1.084 	\\
PCSNP$_{2/3}$	&	0.080 	&	0	&	0.536 	&	&	0.097 	&	0.00e-0	&	0.694 	&	&	0.184 	&	1.21e-2	&	1.435 	\\
IRucL$_{2/3}$	&	0.080 	&	0	&	28.95 	&	&	0.098 	&	5.10e-5	&	69.37 	&	&	0.191 	&	1.36e-2	&	187.5 	\\
TFPC$_{2/3}$	&	0.080 	&	0	&	1.009 	&	&	0.097 	&	0.00e-0	&	1.082 	&	&	0.184 	&	1.22e-2	&	2.055 	\\
\botrule
		\end{tabular} 
\end{table}
 \subsection{Solving sparse logistic regression problems}
We then employ the selected algorithms to solve the $L_2$ norm regularized  logistic regression,
\begin{eqnarray}\label{lr-q}
\begin{array}{l}
\min_{\bx\in\R^n}~f_{lr}(\bx) +   \lambda \|\bx\|^q_q,
\end{array}
\end{eqnarray}    
where $f_{lr}$ is defined in Example \ref{exp-lr}. Data $\ba_i$ and labels ${b_i\in\{0,1\}}$ are generated using $8$ selected real datasets whose dimensions are given in Table \ref{Detail-datasets}, where `arcene' and `newgp' are from UCI \cite{asuncion2007uci} and Glmnet  \cite{qian2013glmnet}, respectively, and the remaining is from LIBSVM \cite{chang2011libsvm}. For the five datasets with ${n\leq10000}$, 
sample-wise normalization has been conducted so that each sample has mean zero and variance one, and then the same normalization is employed feature-wisely. For the remaining datasets, they are feature-wisely scaled to $[-1,1]$.  To measure the algorithmic performance, we report the computational time (in seconds), objective function value $f_{lr}$,  size $|\CS|$ of the support set of $\bx$, and the classification accuracy,  \begin{eqnarray*} 
\begin{array}{l}{\rm Acc}:=1- \frac{1}{m}\sum_{i=1}^m|b_i-\sign (\max\{\langle \ba_i,\bx\rangle,0\})|.\end{array}
\end{eqnarray*} 

\begin{table}[t]  
	\renewcommand{\arraystretch}{1}\addtolength{\tabcolsep}{7pt}
\caption{Dimensions of eight real datasets. \label{Detail-datasets} }
\begin{tabular}{llrr|llrr}\toprule  
Dataset&Source&$m$ &$n$ &Dataset&Source&$m$ &$n$ \\ \toprule
{duke}&LIBSVM  &38	&7,129&{gisette}&LIBSVM &1,000&5,000\\ 
{leuke}&LIBSVM &38&7,129&{rcv1}&LIBSVM &	20,242&47,236\\
{colon}&LIBSVM &62 &2,000&{newsgp}&Glmnet  &11,314 &777,811\\
{arcene}&UCI    &100 &10,000&
{news20}&LIBSVM &19,996&1,355,191\\
\hline
\end{tabular} 
\end{table} 

We compare {PSNP} with {IHT$_0$}, {IHT$_{1/2}$}, and {TFPC$_{2/3}$} since the latter three can be readily adapted to solve the SVM problems by just substituting ${f=f_{lr}}$ into \eqref{PGD}. For simplicity, we set  $ \lambda= 0.005/({m\log_2(n)})\|\sum_{i=1}^mb_i\ba_i\|_{\infty}$ for model \eqref{svm-q} and  ${\mu=10\lambda}$ in $f_{lr}$. Moreover, we terminate all algorithms if  either ${k>10000}$ or at the $k$th iteration two conditions ${\CS^{k+1}=\CS^k}$ and ${\|\nabla_{\CS^k}  F(\bx^{k})\|_{\infty} <{\rm ln}({mn})10^{-7}}$ are satisfied.  

  \begin{figure}[!h]
\centering
\includegraphics[scale=0.78]{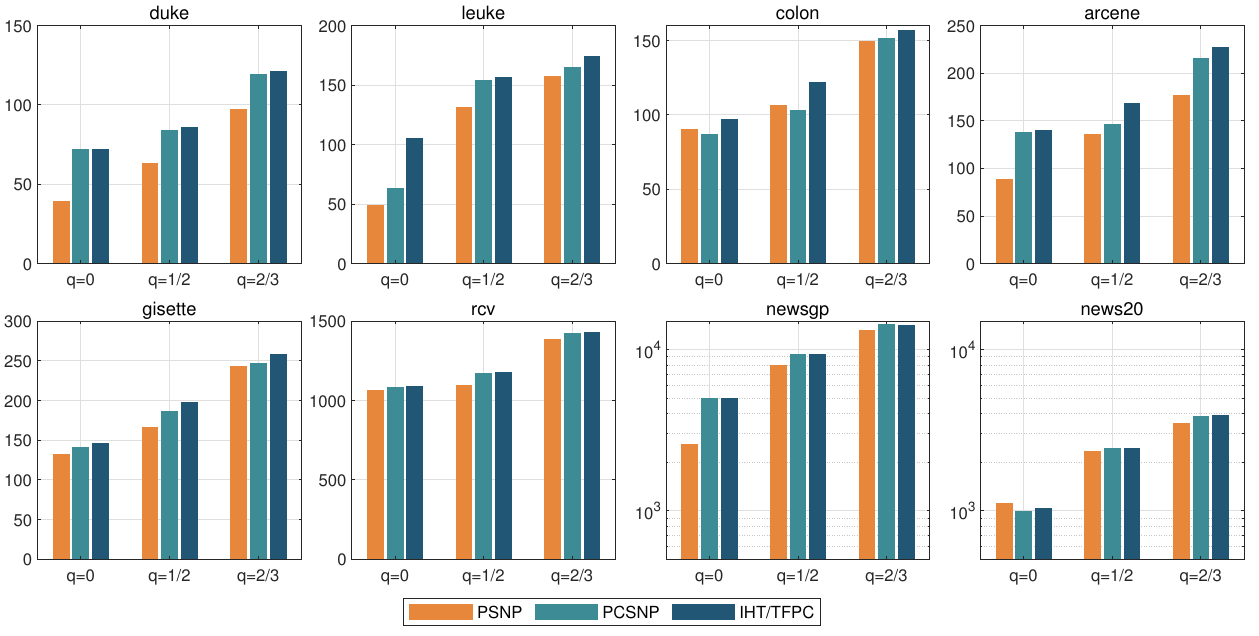} 
\caption{Length of the support sets for logistic regression problems. \label{fig:supp-lr}}
\end{figure}

From Fig. \ref{fig:supp-lr}, one can observe that the smaller $q$ 
enables fewer lengths of the support sets, but higher objective function values and lower accuracy, as reported in Table \ref{tab:LR}. Apparently, for each $q\in\{0,1/2,2/3\}$, PSNP and PCSNP run faster than their counterparts. Taking dataset {news20} as an example, {PSNP}$_{1/2}$, {PCSNP}$_{1/2}$, and  {IHT}$_{1/2}$  respectively consume $3.603$, $9.546$, and $30.49$ seconds, respectively. We point out that the classification accuracy usually relies on the sparsity level of the solution, that is the larger $|\CS|$ is the higher Acc is.

\begin{table}[!t]
	\renewcommand{\arraystretch}{1}\addtolength{\tabcolsep}{-3pt}
	\caption{Performance for solving logistic regression problems.}
	\label{tab:LR} 
		\begin{tabular}{llcccccccccccccc}
			\toprule
	&        & \multicolumn{3}{c}{$q=0$} &&\multicolumn{3}{c}{$q=1/2$} && \multicolumn{3}{c}{$q=2/3$}\\\cmidrule{3-5}\cmidrule{7-9}\cmidrule{11-13}
	&		&	PSNP$_0$	&	PCSNP$_0$	&	IHT$_0$	&	&	PSNP$_{1/2}$	&	PCSNP$_{1/2}$	&	IHT$_{1/2}$	&	&	PSNP$_{2/3}$	&	PCSNP$_{2/3}$	&	TFPC$_{2/3}$	\\\toprule
	&	Acc	&	1.000 	&	1.000 	&	1.000 	&	&	1.000 	&	1.000 	&	1.000 	&	&	1.000 	&	1.000 	&	1.000 	\\
 {duke}	&	$f_{svm}$	&	0.008 	&	0.005 	&	0.005 	&	&	0.007 	&	0.005 	&	0.005 	&	&	0.005 	&	0.004 	&	0.004 	\\
	&	Time	&	0.068 	&	0.092 	&	0.082 	&	&	0.160 	&	0.175 	&	0.183 	&	&	0.212 	&	0.203 	&	0.220 	\\\midrule
	&	Acc	&	1.000 	&	1.000 	&	1.000 	&	&	1.000 	&	1.000 	&	1.000 	&	&	1.000 	&	1.000 	&	1.000 	\\
{leuke}	&	$f_{svm}$	&	0.011 	&	0.007 	&	0.006 	&	&	0.005 	&	0.004 	&	0.004 	&	&	0.004 	&	0.004 	&	0.004 	\\
	&	Time	&	0.062 	&	0.161 	&	0.097 	&	&	0.215 	&	0.351 	&	0.346 	&	&	0.382 	&	0.396 	&	0.435 	\\\midrule
	&	Acc	&	1.000 	&	1.000 	&	1.000 	&	&	1.000 	&	1.000 	&	1.000 	&	&	1.000 	&	1.000 	&	1.000 	\\
 {colon}	&	$f_{svm}$	&	0.019 	&	0.015 	&	0.014 	&	&	0.014 	&	0.014 	&	0.013 	&	&	0.012 	&	0.012 	&	0.012 	\\
	&	Time	&	0.027 	&	0.033 	&	0.038 	&	&	0.250 	&	0.137 	&	0.150 	&	&	0.270 	&	0.259 	&	0.467 	\\\midrule
	&	Acc	&	1.000 	&	1.000 	&	1.000 	&	&	1.000 	&	1.000 	&	1.000 	&	&	1.000 	&	1.000 	&	1.000 	\\
{arcene}	&	$f_{svm}$	&	0.010 	&	0.007 	&	0.007 	&	&	0.007 	&	0.007 	&	0.006 	&	&	0.006 	&	0.005 	&	0.005 	\\
	&	Time	&	0.125 	&	0.393 	&	0.451 	&	&	2.138 	&	1.697 	&	1.855 	&	&	4.144 	&	3.933 	&	4.215 	\\\midrule
	&	Acc	&	1.000 	&	1.000 	&	1.000 	&	&	1.000 	&	1.000 	&	1.000 	&	&	1.000 	&	1.000 	&	1.000 	\\
{gisette}	&	$f_{svm}$	&	0.020 	&	0.018 	&	0.017 	&	&	0.016 	&	0.013 	&	0.013 	&	&	0.012 	&	0.012 	&	0.011 	\\
	&	Time	&	0.408 	&	0.729 	&	1.154 	&	&	2.918 	&	3.693 	&	5.013 	&	&	6.313 	&	5.779 	&	8.031 	\\\midrule
	&	Acc	&	0.959 	&	0.959 	&	0.959 	&	&	0.962 	&	0.963 	&	0.963 	&	&	0.963 	&	0.964 	&	0.964 	\\
 {rcv1}	&	$f_{svm}$	&	0.214 	&	0.214 	&	0.214 	&	&	0.211 	&	0.210 	&	0.210 	&	&	0.208 	&	0.207 	&	0.207 	\\
	&	Time	&	0.143 	&	0.248 	&	1.006 	&	&	3.735 	&	2.507 	&	3.307 	&	&	6.495 	&	3.355 	&	2.406 	\\\midrule
	&	Acc	&	0.960 	&	0.978 	&	0.978 	&	&	0.987 	&	0.991 	&	0.990 	&	&	0.995 	&	0.995 	&	0.995 	\\
{newsgp}	&	$f_{svm}$	&	0.276 	&	0.227 	&	0.227 	&	&	0.201 	&	0.191 	&	0.191 	&	&	0.176 	&	0.173 	&	0.173 	\\
	&	Time	&	0.488 	&	0.862 	&	1.676 	&	&	2.821 	&	4.622 	&	5.893 	&	&	7.938 	&	4.504 	&	8.234 	\\\midrule
	&	Acc	&	0.941 	&	0.949 	&	0.949 	&	&	0.962 	&	0.963 	&	0.963 	&	&	0.967 	&	0.968 	&	0.968 	\\
{news20}	&	$f_{svm}$	&	0.266 	&	0.263 	&	0.261 	&	&	0.231 	&	0.229 	&	0.229 	&	&	0.219 	&	0.217 	&	0.216 	\\
	&	Time	&	0.590 	&	2.873 	&	13.12 	&	&	3.603 	&	9.546 	&	30.49 	&	&	12.29 	&	17.46 	&	22.07 	\\
\hline
		\end{tabular} 
\end{table}

\subsection{Solving SVM problems}
We then employ the selected algorithms to solve the squared Hinge loss SVM problem, 
\begin{eqnarray}\label{svm-q}
\begin{array}{l}
\min_{\bx\in\R^n}~f_{svm}(\bx) +   \lambda \|\bx\|^q_q,
\end{array}
\end{eqnarray}    
where $f_{svm}$ is defined in Example \ref{exp-svm}. Data $\ba_i$ and labels ${y_i\in\{1,-1\}}$ are also generated using these datasets in Table \ref{Detail-datasets}. A similar normalization process is also employed for these datasets. For this problem, we set $ \lambda= 0.4/({m\log_{10}(10+n)})\|\sum_{i=1}^mb_i\ba_i\|_{\infty}$ for model \eqref{svm-q} and  ${\mu=30+10{\rm sgn}(m-1000)}$ in objective function $f_{svm}$. The stopping criteria are set the same as that for logistic regression problems. To evaluate the algorithmic permanence, we also report four metrics: the computational time (in seconds), $f_{svm}$,  $|\CS|$, and the classification accuracy,
\begin{eqnarray*} 
\begin{array}{l}{\rm Acc}:=1- \frac{1}{m}\sum_{i=1}^m|y_i-\sign (\langle \ba_i,\bx\rangle)|.\end{array}
\end{eqnarray*} 
From Fig. \ref{fig:supp}, one can observe that the smaller $q$ 
enables fewer lengths of the support sets, but higher objective function values and lower accuracy, as reported in Table \ref{tab:SVM}. Apparently, for each ${q\in\{0,1/2,2/3\}}$, PSNP and PCSNP run much faster than their counterparts. Taking dataset {news20} as an example, {PSNP}$_0$, {PCSNP}$_{0}$, and  {IHT}$_{0}$  respectively consume $1.22$, $2.354$, and $26.30$ seconds.  

  \begin{figure}[!t]
\centering
\includegraphics[scale=0.78]{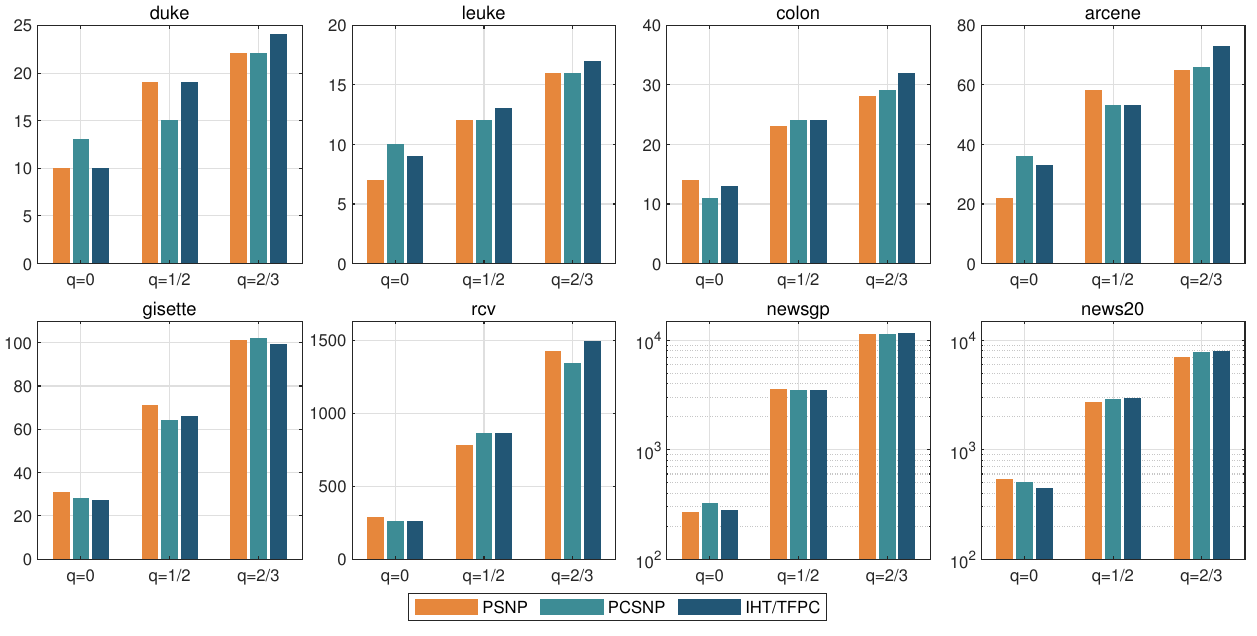} 
\caption{Length of the support sets for SVM problems. \label{fig:supp}}
\end{figure}

\begin{table}[!h]
	\renewcommand{\arraystretch}{1}\addtolength{\tabcolsep}{-3.5pt}
	\caption{Performance for solving SVM problems.}
	\label{tab:SVM} 
		\begin{tabular}{llcccccccccccccc}
			\toprule
	&        & \multicolumn{3}{c}{$q=0$} &&\multicolumn{3}{c}{$q=1/2$} && \multicolumn{3}{c}{$q=2/3$}\\\cmidrule{3-5}\cmidrule{7-9}\cmidrule{11-13}
	&		&	PSNP$_0$	&	PCSNP$_0$	&	IHT$_0$	&	&	PSNP$_{1/2}$	&	PCSNP$_{1/2}$	&	IHT$_{1/2}$	&	&	PSNP$_{2/3}$	&	PCSNP$_{2/3}$	&	TFPC$_{2/3}$	\\\toprule
	&	Acc	&	1.000 	&	1.000 	&	1.000 	&	&	1.000 	&	1.000 	&	1.000 	&	&	1.000 	&	1.000 	&	1.000 	\\
 {duke}	&	$f_{svm}$	&	0.420 	&	0.215 	&	0.318 	&	&	0.148 	&	0.175 	&	0.127 	&	&	0.110 	&	0.108 	&	0.099 	\\
	&	Time	&	0.102 	&	0.115 	&	0.566 	&	&	0.431 	&	0.413 	&	1.022 	&	&	1.264 	&	0.806 	&	2.438 	\\\midrule
	&	Acc	&	1.000 	&	1.000 	&	1.000 	&	&	1.000 	&	1.000 	&	1.000 	&	&	1.000 	&	1.000 	&	1.000 	\\
{leuke}	&	$f_{svm}$	&	0.235 	&	0.140 	&	0.163 	&	&	0.131 	&	0.111 	&	0.102 	&	&	0.095 	&	0.087 	&	0.075 	\\
	&	Time	&	0.047 	&	0.072 	&	1.082 	&	&	0.206 	&	0.256 	&	1.063 	&	&	0.377 	&	0.515 	&	1.916 	\\\midrule
	&	Acc	&	1.000 	&	1.000 	&	1.000 	&	&	1.000 	&	1.000 	&	1.000 	&	&	1.000 	&	1.000 	&	1.000 	\\
 {colon}	&	$f_{svm}$	&	0.720 	&	0.663 	&	0.616 	&	&	0.303 	&	0.313 	&	0.300 	&	&	0.241 	&	0.234 	&	0.213 	\\
	&	Time	&	0.033 	&	0.047 	&	0.179 	&	&	0.180 	&	0.225 	&	0.425 	&	&	0.888 	&	0.466 	&	1.116 	\\\midrule
	&	Acc	&	1.000 	&	1.000 	&	1.000 	&	&	1.000 	&	1.000 	&	1.000 	&	&	1.000 	&	1.000 	&	1.000 	\\
{arcene}	&	$f_{svm}$	&	1.101 	&	0.652 	&	0.611 	&	&	0.249 	&	0.231 	&	0.200 	&	&	0.161 	&	0.164 	&	0.145 	\\
	&	Time	&	0.301 	&	0.651 	&	3.285 	&	&	3.853 	&	5.776 	&	16.63 	&	&	13.99 	&	14.46 	&	25.60 	\\\midrule
	&	Acc	&	0.973 	&	0.966 	&	0.977 	&	&	1.000 	&	1.000 	&	0.999 	&	&	1.000 	&	1.000 	&	1.000 	\\
{gisette}	&	$f_{svm}$	&	1.864 	&	2.011 	&	1.827 	&	&	0.710 	&	0.729 	&	0.708 	&	&	0.483 	&	0.447 	&	0.455 	\\
	&	Time	&	1.025 	&	1.876 	&	11.06 	&	&	3.866 	&	5.860 	&	26.65 	&	&	21.22 	&	10.14 	&	37.90 	\\\midrule
	&	Acc	&	0.924 	&	0.921 	&	0.922 	&	&	0.937 	&	0.936 	&	0.935 	&	&	0.941 	&	0.941 	&	0.941 	\\
 {rcv1}	&	$f_{svm}$	&	11.31 	&	11.44 	&	11.43 	&	&	10.50 	&	10.46 	&	10.46 	&	&	10.25 	&	10.26 	&	10.24 	\\
	&	Time	&	0.340 	&	1.345 	&	3.426 	&	&	1.377 	&	3.888 	&	18.19 	&	&	3.349 	&	4.075 	&	20.79 	\\\midrule
	&	Acc	&	0.699 	&	0.735 	&	0.728 	&	&	0.845 	&	0.844 	&	0.846 	&	&	0.892 	&	0.893 	&	0.893 	\\
{newsgp}	&	$f_{svm}$	&	17.54 	&	17.09 	&	17.20 	&	&	14.37 	&	14.38 	&	14.38 	&	&	12.81 	&	12.81 	&	12.79 	\\
	&	Time	&	0.657 	&	1.685 	&	19.52 	&	&	5.066 	&	7.668 	&	37.54 	&	&	9.920 	&	8.432 	&	68.55 	\\\midrule
	&	Acc	&	0.869 	&	0.868 	&	0.863 	&	&	0.905 	&	0.905 	&	0.906 	&	&	0.921 	&	0.921 	&	0.922 	\\
{news20}	&	$f_{svm}$	&	13.02 	&	13.08 	&	13.23 	&	&	11.41 	&	11.36 	&	11.35 	&	&	10.70 	&	10.64 	&	10.63 	\\
	&	Time	&	1.220 	&	2.354 	&	26.30 	&	&	3.570 	&	10.91 	&	90.15 	&	&	20.60 	&	18.69 	&	116.9 	\\

	\hline
		\end{tabular} 
\end{table}

\section{Conclusion}
In this paper, we provided a unified way to address the $L_q$ norm regularized optimization for any $q\in[0,1)$, where the primary objective function has semismooth gradients. The two proposed algorithms fully leveraged the proximal operator of the $L_q$ norm, which enabled a uniform lower bound. This effectively confined the sequence to a subspace where the semismooth Newton method solved the problem efficiently, thereby leading to the whole sequence convergence at a superlinear or quadratic rate. In contrast to the prevalent use of the K$\L{}$ property to establish the convergence results for the non-smooth and non-convex optimization, we achieved these results based on the second-order sufficient condition.

\backmatter

\bmhead{Acknowledgments} 
This work is supported in part by the Fundamental Research Funds for the Central Universities, the Talent Fund of Beijing Jiaotong University, and the National Natural Science Foundation of China (12001019, 12261020).
%\section*{Declarations}
%
%Some journals require declarations to be submitted in a standardised format. Please check the Instructions for Authors of the journal to which you are submitting to see if you need to complete this section. If yes, your manuscript must contain the following sections under the heading `Declarations':

%\begin{appendices}

%\section{Section title of first appendix}\label{secA1}
%
%An appendix contains supplementary information that is not an essential part of the text itself but which may be helpful in providing a more comprehensive understanding of the research problem or it is information that is too cumbersome to be included in the body of the paper.
%\end{appendices}

%%===========================================================================================%%
%% If you are submitting to one of the Nature Portfolio journals, using the eJP submission   %%
%% system, please include the references within the manuscript file itself. You may do this  %%
%% by copying the reference list from your .bbl file, paste it into the main manuscript .tex %%
%% file, and delete the associated \verb+\bibliography+ commands.                            %%
%%===========================================================================================%%
\bibliography{references}% common bib file
%% if required, the content of .bbl file can be included here once bbl is generated
%%\input sn-article.bbl

\end{document}